\theoremstyle{plain}
\newtheorem{theorem}{Theorem}[section]
\newtheorem{lemma}[theorem]{Lemma}
\newtheorem{proposition}[theorem]{Proposition}
\newtheorem{corollary}[theorem]{Corollary}
\theoremstyle{definition}
\newtheorem{definition}[theorem]{Definition}
\newtheorem{example}[theorem]{Example}
\theoremstyle{remark}
\newtheorem{remark}[theorem]{Remark}
\newcommand{\Ind}{\big\uparrow}
\newcommand{\piso}{\overset{p}{\cong}}
\newcommand{\SL}{\textnormal{SL}_2(\mathbb{F}_p)}
\newcommand{\U}{\mathcal{U}}
\newcommand{\Z}{\mathbb{Z}}
\newcommand{\z}{\zeta_p}
\newcommand{\mone}{-1}
\newcommand{\mtwo}{-2}
\DeclareMathOperator{\Sym}{Sym}
\newcommand{\List}[2]{#1_1, #1_2, \dots,#1_{#2}}
\title[Representation ring and modular plethysms of $\SL$]{The representation ring of $\SL$ and stable modular plethysms of its natural module in characteristic $p$}
\author{Pavel Turek}
\date{\today}
\subjclass[2010]{Primary: 20C20, Secondary: 05E05, 05E10, 19A22, 20C33}
\address{Department of Mathematics, Royal Holloway, University of London, Egham, Surrey TW20 0EX, UK}
\email{pkah149@live.rhul.ac.uk}
\begin{document}	
	\begin{abstract}
		Let $p$ be an odd prime and let $k$ be a field of characteristic $p$. We provide a practical algebraic description of the representation ring of $k\SL$ modulo projectives. We then investigate a family of modular plethysms of the natural $k\SL$-module $E$ of the form $\nabla^{\nu} \Sym^l E$ for a partition $\nu$ of size less than $p$ and $0\leq l\leq p-2$. Within this family we classify both the modular plethysms of $E$ which are projective and the modular plethysms of $E$ which have only one non-projective indecomposable summand which is moreover irreducible. We generalise these results to similar classifications where modular plethysms of $E$ are replaced by $k\SL$-modules of the form $\nabla^{\nu} V$, where $V$ is a non-projective indecomposable $k\SL$-module and $|\nu|<p$.  
	\end{abstract}

\maketitle
	
	\section{Introduction}
	
	Throughout the paper unless specified otherwise by a module we mean a finite-dimensional right module. Let $p$ be an odd prime. We fix a field $k$ of characteristic $p$. We denote by $\mathbb{F}_p$ the field with $p$ elements. Let $E$ be the natural two-dimensional module of the group $G=\SL$ over $k$. The irreducible $kG$-modules, up to isomorphism, are the symmetric powers $\Sym^0 E, \Sym^1 E, \dots, \Sym^{p-1} E$. Of these $p$ modules only $\Sym^{p-1} E$ is projective. The group ring $kG$ is of finite type, meaning it has only finitely many isomorphism classes of indecomposable modules.
	
	In this paper we study $kG$-modules modulo projectives, that is we work in the stable module category of $kG$. This is a feature which distinguishes our work from other similar publications studying $kG$-modules such as Kouwenhoven \cite{KouwenhovenLambda90a} or Hughes and Kemper \cite{HughesKemperSylow01}. An advantage of working in the stable module category of $kG$ is that understanding tensor products and Schur functors is relatively easy. Moreover, one may use results from the stable module category of $kG$ as starting points for establishing analogous results in the category of all $kG$-modules, which may be difficult to approach directly.
	
	Our first result is a novel description of the representation ring of $kG$ modulo projectives. It can be viewed as a `lift' of Almkvist's description of the representation ring of $k[\Z/p\Z]$ modulo projectives in \cite[Proposition~3.2]{AlmkvistReciprocity81} to $G$. A description of the representation ring of $kG$ (which includes the projective $kG$-modules) is established by Kouwenhoven in \cite[Corollary~1.4.2(c)]{KouwenhovenLambda90a}. As a consequence of working modulo projectives our result is more practical to work with, as can be seen in Example~\ref{height and position example}.
	
	To state our result let us denote by $\overline{R(G)}$ the representation ring of $kG$ modulo projectives and for any $kG$-module $V$ let $\overline{V}$ be the element of $\overline{R(G)}$ corresponding to $V$. Finally, write $\U_l$ for $\overline{\Sym^l E}$, $\Omega$ for the Heller operator and $k$ for the trivial $kG$-module. Then we obtain the following isomorphism involving a primitive $p$th root of unity denoted by $\z$.    
	
	\begin{theorem}\label{rep ring thm}
		The map $\Psi\!: \mathbb{Z}[\zeta_p + \zeta_p^{-1}][X,Y]/(X^{p-1}-1, Y^2-1) \to \overline{R(G)}$ given by $\zeta_p^2 + \zeta_p^{-2} \mapsto \U_2 - \U_0$, $X\mapsto \overline{\Omega k}$ and $Y\mapsto \U_{p-2}$ is an isomorphism.
	\end{theorem}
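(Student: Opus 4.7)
The plan is to establish $\Psi$ is an isomorphism in three stages: well-definedness, surjectivity, and injectivity via a rank comparison.

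For well-definedness I would verify the three defining relations in $\overline{R(G)}$. First, $\U_{p-2}^{2}=\U_{0}$ follows from $\Sym^{p-2}E\otimes\Sym^{p-2}E\cong k\oplus(\text{projective})$, which can be checked via dimension and self-duality: since $\Sym^{p-2}E$ is absolutely irreducible, its tensor square contains a unique copy of $k$, and the remaining summand has dimension divisible by $p$, so it is projective. Second, $\overline{\Omega k}^{\,p-1}=\U_{0}$ states that the trivial module has period $p-1$ under the Heller operator; this can be verified by iterating $\Omega$ using the Loewy structure of the projective covers $P(\U_l)$, or invoked from the Brauer tree structure of the principal block of $kG$. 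Third, that $\U_{2}-\U_{0}$ satisfies the minimal polynomial of $\zeta_p+\zeta_p^{-1}$ follows from the stable Clebsch--Gordan identity $\U_{l}\U_{2}=\U_{l+2}+\U_{l}+\U_{l-2}$ (for $l\geq 2$) combined with the vanishing $\U_{p-1}=0$ (as $\Sym^{p-1}E$ is the Steinberg module, hence projective); together these force $\mathbb{Z}[\U_{2}-\U_{0}]\subseteq\overline{R(G)}$ to be a quotient of $\mathbb{Z}[\zeta_p+\zeta_p^{-1}]$.

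For surjectivity I observe that multiplication by $\overline{\Omega k}$ in $\overline{R(G)}$ implements the Heller operator (since $\Omega M\cong\Omega k\otimes M$ modulo projectives), while multiplication by $\U_{p-2}$ swaps the two blocks of maximal defect, which are distinguished by the central character of $-I\in Z(G)$ acting as $(-1)^{l}$ on $\Sym^{l}E$. Thus every non-projective indecomposable class is of the form $\overline{\Omega^{i}V}$ or $\U_{p-2}\cdot\overline{\Omega^{i}V}$ for some $V$ in the $\mathbb{Z}$-span of the principal-block irreducibles $\U_{0},\U_{2},\ldots,\U_{p-3}$, and this span lies in the image of $\mathbb{Z}[\zeta_p+\zeta_p^{-1}]$ under $\Psi$. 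For injectivity I would compare $\mathbb{Z}$-ranks: the source is a free $\mathbb{Z}$-module of rank $\tfrac{p-1}{2}\cdot(p-1)\cdot 2=(p-1)^{2}$, and the target $\overline{R(G)}$ is free of the same rank $(p-1)^{2}$, as can be read off from the classification of non-projective indecomposable $kG$-modules via the Brauer tree structure of the two defect-$p$ blocks. A surjective $\mathbb{Z}$-linear map between free modules of equal finite rank is then an isomorphism. The main obstacle I foresee is the surjectivity step, which requires a precise description of the stable Auslander--Reiten quiver and a verification that $\Omega$ together with the twist $\U_{p-2}$ reaches every non-projective indecomposable.
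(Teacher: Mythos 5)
Your proposal takes a genuinely different route from the paper. The paper proceeds ``forwards'': it first constructs an explicit surjective ring homomorphism $\Theta\colon R_I \to \mathbb{Z}[\zeta_p+\zeta_p^{-1}]$ sending $\U_l\mapsto\zeta_p^{-l}+\zeta_p^{-l+2}+\cdots+\zeta_p^l$, computes its kernel, restricts to the even part $R_E$ to get an isomorphism, and then uses the $\Omega$-orbit description of the indecomposables (Corollary~\ref{kG-Omega cor}(iii)) to bolt on the $X$ variable. You go ``backwards'': define $\Psi$ on generators, check relations, argue surjectivity via the Heller operator and the two defect-$p$ blocks, and finish with a $\mathbb{Z}$-rank comparison. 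Your surjectivity argument and the rank count (both sides free of rank $(p-1)^2$, and a surjection between free $\mathbb{Z}$-modules of equal finite rank is an isomorphism) are sound and somewhat more economical than the paper's explicit identification of free generators.

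The gap is in the well-definedness of $\zeta_p^2+\zeta_p^{-2}\mapsto\U_2-\U_0$. You assert that the Clebsch--Gordan recursion together with $\U_{p-1}=0$ ``forces'' $\mathbb{Z}[\U_2-\U_0]$ to be a quotient of $\mathbb{Z}[\zeta_p+\zeta_p^{-1}]$, but that is not an immediate consequence: the recursion plus $\U_{p-1}=0$ yields \emph{some} monic integer polynomial $q$ of degree $(p-1)/2$ with $q(\U_2-\U_0)=0$, and one must then verify $q$ is specifically the minimal polynomial of $\zeta_p+\zeta_p^{-1}$ rather than an unrelated degree-$(p-1)/2$ polynomial. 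The natural verification is to check that the elements $\zeta_p^{-l}+\zeta_p^{-l+2}+\cdots+\zeta_p^l\in\mathbb{Z}[\zeta_p+\zeta_p^{-1}]$ satisfy the \emph{same} Clebsch--Gordan multiplication (with the same vanishing at $l=p-1$), so that the same $q$ kills $\zeta_p^2+\zeta_p^{-2}$; but this is precisely the content of the paper's Proposition~\ref{Iso of R_I prop}(i), namely that $\Theta$ is a ring homomorphism. So the computation you are trying to sidestep is still required, and your proposal implicitly leans on it without carrying it out. There is also a small slip in the verification of $\U_{p-2}^2=\U_0$: a module of dimension divisible by $p$ need not be projective, so the stated dimension-and-self-duality argument does not stand alone; the clean justification is the stable Clebsch--Gordan rule (Theorem~\ref{CG rule thm}), as in Example~\ref{endo examples}.
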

	
	Our main results concern Schur functors applied to the indecomposable $kG$-modules and modular plethysms (that is compositions of two Schur functors) of the natural $kG$-module $E$. The study of modular plethysms is related to the study of plethysms of Schur functions via formal characters. Given partitions $\lambda$ and $\mu$ the Schur functors $\nabla^{\lambda}$ and $\nabla^{\mu}$ have formal characters equal to the Schur functions $s_{\lambda}$ and $s_{\mu}$, respectively, and the modular plethysm $\nabla^{\lambda} \nabla^{\mu}$ has formal character equal to the plethysm $s_{\lambda}\circ s_{\mu}$. Finding a combinatorial description of the decomposition of the plethysm $s_{(m)}\circ s_{(n)}$ into a sum of Schur functions has been identified as one of the major open problems in algebraic combinatorics by Stanley \cite[Problem~9]{StanleyPositivity00}.
	
	In this paper our focus is on modular plethysms $\nabla^{\nu} \Sym^l E$ with $|\nu|<p$ (we refer to such partitions $\nu$ as \textit{$p$-small}) and $0\leq l\leq p-2$. Equivalently, one may characterise such modular plethysms as $\nabla^{\nu} V$ with $\nu$ a $p$-small partition and $V$ a non-projective irreducible $kG$-module.
	
	\subsection{Main results}\label{main result sec}
	
	Our first main result is the classification of the projective modular plethysms (subject to the earlier constraints). In the statement (and the rest of the paper) we write $\ell(\nu)$ for the length of partition $\nu$.
	
	\begin{theorem}\label{projective classification thm}
		Let $0\leq l\leq p-2$ and let $\nu$ be a $p$-small partition. Then $\nabla^{\nu} \Sym^l E$ is projective if and only if $\nu_1 \geq p-l$ or $\ell(\nu)\geq l+2$.
	\end{theorem}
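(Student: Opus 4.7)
The plan is to reduce the projectivity question to a computation in the representation ring of a Sylow $p$-subgroup of $G$. Let $u \in G$ be a nontrivial unipotent element, so that $\langle u \rangle$ is a Sylow $p$-subgroup. A $kG$-module is projective if and only if its restriction to $\langle u \rangle$ is projective as a $k\langle u \rangle$-module, that is, a direct sum of copies of the regular module $V_p$. Since $u$ acts on $E$ as a single Jordan block of size $2$ and $l + 1 \leq p - 1$, the symmetric power $\Sym^l E$ restricts to the Jordan block $V_{l+1}$. Because Schur functors commute with restriction, the restriction of $\nabla^{\nu} \Sym^l E$ to $\langle u \rangle$ is $\nabla^{\nu} V_{l+1}$, and the question becomes: when is $\nabla^{\nu} V_{l+1}$ a direct sum of copies of $V_p$?

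Next we appeal to Almkvist's description of the stable module category of $k[\Z/p\Z]$ (see \cite{AlmkvistReciprocity81}), under which the class of $V_m$ corresponds to $1 + \zeta_p + \zeta_p^2 + \cdots + \zeta_p^{m-1} \in \Z[\zeta_p]$ and projective modules map to $0$. Since $|\nu| < p$, the group algebra $k\Sym_{|\nu|}$ is semisimple, so the Schur functor $\nabla^{\nu}$ descends through this map to the Schur function $s_\nu$. Therefore $\nabla^{\nu} V_{l+1}$ is projective precisely when the principal specialization $s_\nu(1, \zeta_p, \zeta_p^2, \ldots, \zeta_p^l) = 0$.

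The hook content formula
\begin{equation*}
s_\nu(1, q, q^2, \ldots, q^l) = q^{n(\nu)} \prod_{(i,j) \in \nu} \frac{1 - q^{l + 1 + j - i}}{1 - q^{h(i,j)}}
\end{equation*}
then completes the analysis. At $q = \zeta_p$, the denominator is nonzero because every hook length satisfies $h(i,j) \leq |\nu| < p$. The numerator vanishes if and only if some cell $(i,j) \in \nu$ satisfies $j - i \equiv -(l+1) \pmod p$. Since cell contents lie in $[1 - \ell(\nu), \nu_1 - 1] \subseteq (-p, p)$, this congruence has at most two solutions in the allowed range: $j - i = -(l+1)$, attained in $\nu$ exactly when $\ell(\nu) \geq l + 2$ (witnessed by the cell $(l+2, 1)$), and $j - i = p - l - 1$, attained in $\nu$ exactly when $\nu_1 \geq p - l$ (witnessed by the cell $(1, p-l)$). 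This gives both directions of the theorem. The main obstacle is verifying that $\nabla^{\nu} V_{l+1}$ is indeed sent to $s_\nu(1, \zeta_p, \ldots, \zeta_p^l)$ under Almkvist's isomorphism, i.e.\ checking that the Schur functor $\nabla^{\nu}$ interacts correctly with the reduction modulo projectives; this compatibility is facilitated by the $p$-smallness assumption, after which the remainder of the proof is a direct combinatorial calculation.
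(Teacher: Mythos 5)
Your overall strategy --- restrict to a Sylow $p$-subgroup $\langle u\rangle\cong\Z/p\Z$, transport the question to a specialisation of $s_\nu$ at $p$th roots of unity through the stable Green ring of $\Z/p\Z$, then analyse the hook content formula --- is exactly the alternative route that the paper acknowledges in \S\ref{intro sub} and deliberately avoids in favour of a self-contained argument inside $\overline{R(G)}$. The final combinatorial step (hook lengths all lie in $\{1,\dots,p-1\}$ so the denominator is nonzero; the only contents congruent to $-(l+1)$ modulo $p$ in the range $(-p,p)$ are $-(l+1)$ and $p-l-1$, witnessed by $\ell(\nu)\geq l+2$ and $\nu_1\geq p-l$ respectively) is correct and closely parallels the paper's analysis of $\mathcal{C}_{l+1}$ via Lemma~\ref{shifted content lemma}.

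There is, however, a genuine gap in the middle. The assignment $[V_m]\mapsto 1+\zeta_p+\cdots+\zeta_p^{m-1}$ is \emph{not} a ring homomorphism on the stable Green ring of $\Z/p\Z$: already $V_2\otimes V_2\piso V_1\oplus V_3$, yet $(1+\zeta_p)^2=1+2\zeta_p+\zeta_p^2\neq 2+\zeta_p+\zeta_p^2=1+(1+\zeta_p+\zeta_p^2)$. The specialisation that does respect multiplication (and the $p$-$\lambda$-structure, which is what you need to push $\nabla^\nu$ through) is the centred one, $[V_{l+1}]\mapsto\zeta_p^{-l}+\zeta_p^{-l+2}+\cdots+\zeta_p^{l}$ --- essentially the paper's map $\Theta$ after identifying $\overline{R(\Z/p\Z)}$ with $R_I$ via restriction. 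This map lands in $\Z[\zeta_p+\zeta_p^{-1}]$, which has $\Z$-rank $(p-1)/2$, whereas $\overline{R(\Z/p\Z)}$ has rank $p-1$; the kernel is the ideal generated by $[V_1]+[V_{p-1}]$, and in fact $\overline{R(\Z/p\Z)}\cong\Z[\zeta_p+\zeta_p^{-1}][Y]/(Y^2-1)$ (Proposition~\ref{Iso of R_I prop}(v)), which is not isomorphic to $\Z[\zeta_p]$ --- the two rings do not even have the same group of roots of unity. Consequently your ``precisely when'' is not established: the direction ``specialisation equals $0\Rightarrow\nabla^\nu V_{l+1}$ projective'' does not follow from the specialisation alone, because $0$ has many preimages. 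The paper closes this gap with Lemma~\ref{invertibility of Theta lemma}, whose engine is the parity observation in Lemma~\ref{initial modular plethysms lemma}(ii): every non-projective indecomposable summand of $\nabla^\nu\Sym^l E$ has the same dimension parity, so no kernel element $[V_{2i+1}]+[V_{p-1-2i}]$ (which mixes parities) can appear. A corresponding argument about Jordan block sizes is what your proof needs; as written, it proves only the ``only if'' direction. Incidentally, once the correct specialisation is used, your vanishing criterion $s_\nu(1,\zeta_p,\dots,\zeta_p^l)=0$ does agree with $s_\nu(\zeta_p^{-l},\dots,\zeta_p^{l})=0$ --- they differ by a power of $\zeta_p$ and a Galois twist $\zeta_p\mapsto\zeta_p^2$ --- so the answer you extract is right even though the justification is not.
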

	
	\begin{remark}
		In the case $\ell(\nu)\geq l+2$, the module $\nabla^{\nu} \Sym^l E$ equals the zero module.
	\end{remark}
	
	Next, we introduce a notion of irreducibility in the stable module category.
	
	\begin{definition}
		We say that a module is \textit{stably-irreducible} if it has precisely one non-projective indecomposable summand and this summand is irreducible.
	\end{definition} 
	
	Our second main result is the classification of modular plethysms which are stably-irreducible. It is inspired by the classification of all irreducible modular plethysms of the natural module of $\mathbb{C}$SL$_2(\mathbb{C})$ established by Paget and Wildon in \cite[Corollary~1.8]{PagetWildonPlethysms21}.
	
	\begin{theorem}\label{full irred modular plethysms of E thm}
		Let $0\leq l\leq p-2$ and let $\nu$ be a $p$-small partition. Then $\nabla^{\nu} \Sym^l E$ is stably-irreducible if and only if (at least) one of the following happens:
		\begin{enumerate}[label=\textnormal{(\roman*)}]
			\item \textnormal{(elementary cases augmented by rows)} $\nu=((p-l-1)^b,1)$ or $\nu=((p-l-1)^b)$ for some $b\geq 0$,
			\item \textnormal{(elementary cases augmented by columns)} $\nu=(a+1, a^l)$ or $\nu=(a^{l+1})$ for some $a\geq 0$,
			\item \textnormal{(augmented row cases)} $\nu = ((p-l-1)^b, p-l-2)$ for some $b\geq 0$ or $l=1$ and $\nu$ lies inside the box $2\times (p-2)$,
			\item \textnormal{(augmented column cases)} $\nu = ((a+1)^l, a)$ for some $a\geq 0$ or $l=p-3$ and $\nu$ lies inside the box $(p-2)\times 2$,
			\item \textnormal{(hook case)} $\nu = (p-l-1, 1^l)$,
			\item \textnormal{(rectangular cases)} $p=7$ and either $\nu=(2,2,2)$ with $l=3$ or $\nu=(3,3)$ with $l=2$. 
		\end{enumerate}
	\end{theorem}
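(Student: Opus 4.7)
The plan is to leverage Theorem~\ref{rep ring thm}, which identifies $\overline{R(G)}$ with the concrete ring $R := \Z[\z + \z^{-1}][X,Y]/(X^{p-1}-1, Y^2 - 1)$, in order to recast stable-irreducibility as an arithmetic condition on the image of $\overline{\nabla^\nu \Sym^l E}$ in $R$. Under $\Psi^{-1}$ each non-projective irreducible $\U_j$ ($0 \leq j \leq p-2$) is a specific distinguished element of $R$ (a Chebyshev-type polynomial in $\z + \z^{-1}$, multiplied by $Y$ when $j$ is odd to reflect the block partition of $kG$, and with no $X$-factor since no Heller shift is applied). Thus the task is to identify exactly those $\nu$ for which $\overline{\nabla^\nu \Sym^l E}$ coincides with one of these $p-1$ distinguished classes.

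To compute this image, I would use that the formal character of $\Sym^l E$ is $\z^l + \z^{l-2} + \cdots + \z^{-l}$, so by Stanley's hook content formula the character of $\nabla^\nu \Sym^l E$ equals
\[
\z^{-l|\nu| + 2n(\nu)} \prod_{(i,j) \in [\nu]} \frac{[l + 1 + j - i]_{\z^2}}{[h(i,j)]_{\z^2}},
\]
with $n(\nu) = \sum_i (i-1)\nu_i$ and $h(i,j)$ the hook length. By Theorem~\ref{projective classification thm}, one may assume $\nu$ fits inside the $(l+1) \times (p-l-1)$ rectangle; inside this rectangle every hook length is at most $p-1$, so every quantum integer in the denominator is a unit of $R$. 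Stable-irreducibility then reduces to the arithmetic question of when this product reproduces the character of a single $\U_j$.

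The \emph{if} direction would be a case-by-case verification. Families~(i) and~(ii) confine $\nu$ to a single row or column, so the product telescopes directly to a single quantum integer. For families~(iii) and~(iv), I expect to show that adjoining a full row of length $p-l-1$ or a full column of height $l+1$ to a stably-irreducible shape multiplies the character by an invertible element (possibly accompanied by a $Y$-swap between blocks), hence preserves stable-irreducibility. Case~(v) is the hook partition $(p-l-1, 1^l)$, whose arm-leg symmetry yields a single quantum integer; and the rectangular cases~(vi) are coincidences specific to $p = 7$ which can be verified directly.

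The main obstacle is the \emph{only if} direction: ruling out all other $\nu$ inside the bounding rectangle. I would attempt this by induction on $|\nu|$, using the augmentation rules from (iii)--(iv) as the inductive step and reducing every stably-irreducible shape to a smaller one or to a base case. The hardest part will be proving that no sporadic shape outside the listed families accidentally produces a single-quantum-integer character; this likely requires a careful arithmetic analysis of the hook content factors modulo $p$, and the exceptional cases in~(vi) indicate that genuine coincidences do occur at small $p$ and must be isolated separately.
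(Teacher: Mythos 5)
Your overall strategy — use Stanley's hook content formula to compute the image of $\overline{\nabla^\nu \Sym^l E}$ in $\Z[\z+\z^{-1}]$ as a ratio of quantum integers, and ask when this ratio equals the image of a single $\U_j$ — is indeed the paper's approach, and the ``if'' direction you sketch would work. But your plan for the ``only if'' direction has a genuine gap, and it is exactly where the heart of the matter lies. You reduce the problem to the arithmetic question: for which $\nu$ does $\prod_{c \in \mathcal{C}_{l+1}} g_c / \prod_{h \in \mathcal{H}} g_h$ equal a single $g_j$ (where $g_j = (\z^j - \z^{-j})/(\z-\z^{-1})$)? You then say this ``requires a careful arithmetic analysis of the hook content factors modulo $p$'' and propose an induction on $|\nu|$ — but you give no mechanism for ruling out sporadic coincidences, and indeed you concede coincidences occur at $p=7$.

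The missing key idea is the theorem on cyclotomic units (Theorem~\ref{alg independence thm}, from Washington): the elements $g_j$ for $2 \le j \le (p-1)/2$ are \emph{multiplicatively independent} in $\Z[\z+\z^{-1}]^\times$, with the relations $g_1 = 1$, $g_j = -g_{p-j}$, and the torsion is $\{\pm 1\}$. This single fact converts your intractable arithmetic question into a purely combinatorial identity of multisets: after folding indices $j \mapsto p-j$ into the range $[1,(p-1)/2]$ and discarding $1$s, the product equals $\pm g_i$ iff the folded multisets satisfy $\mathcal{C}_{l+1}^F = (\mathcal{H}^F \cup \{i\})\setminus\{1\}$ (this is Proposition~\ref{multiset prop}). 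From there one reads off that $m_{\mathcal{H}^F}(1) \le 1$ forces $\nu$ to be a rectangle, and a short case check on $m_{\mathcal{C}_{l+1}^F}(2)$ finishes the ``only if.'' Without the unit-independence input there is no reason to believe a product of $g_j$'s can only equal a single $g_i$ when the multisets align — that is precisely the step your ``arithmetic analysis'' would have to re-derive. Also note the paper does not classify all $p$-small $\nu$ by induction; it first reduces (via Lemma~\ref{reduction lemma}, which uses the endotrivial module $\Sym^{p-2}E$ and conjugation $\nu \mapsto \nu'$) to the $(p,l)$-small case where the first row/column is strictly shorter than $p-l-1$ and $l+1$, and there the multiset condition is clean; the full statement is then recovered by appending maximal rows and columns. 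Your sketch of that reduction is intuitively right but needs the precise Corollary~\ref{p-2 endo cor} to justify the row case.
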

	
	With additional notation a more unified classification is possible (see Theorem~\ref{irred modular plethysms of E thm}).
	
	The third and fourth main results generalise Theorem~\ref{projective classification thm} and Theorem~\ref{full irred modular plethysms of E thm}, respectively, by replacing a non-projective irreducible module $\Sym^l E$ by an arbitrary non-projective indecomposable $kG$-module.
	
	In both statements we use that any non-projective indecomposable $kG$-module is isomorphic to $\Omega^i (\Sym^l E)$ for some $0\leq i\leq p-2$ and $0\leq l\leq p-2$. This is established in Corollary~\ref{kG-Omega cor}(ii).
	
	\begin{theorem}\label{final projective classification thm}
		Let $V$ be a non-projective indecomposable $kG$-module and let $\nu$ be a $p$-small partition. Write $V\cong\Omega^i (\Sym^l E)$ for some $0\leq i\leq p-2$ and $0\leq l\leq p-2$. Then the module $\nabla^{\nu} V$ is projective if and only if $\lambda_1 \geq p-l$ or $\ell(\lambda)\geq l+2$, where $\lambda = \nu$ if $i$ is even and $\lambda = \nu'$ if $i$ is odd.
	\end{theorem}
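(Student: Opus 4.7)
The strategy is to reduce everything to the $i=0$ case treated by Theorem~\ref{projective classification thm}. The mechanism is a transpose identity for Schur functors under the Heller shift in the stable module category of $kG$: for every $kG$-module $W$ and every partition $\nu$,
$$\nabla^{\nu}(\Omega W) \;\cong_{\mathrm{st}}\; \Omega^{|\nu|}\,\nabla^{\nu'}(W),$$
where $\cong_{\mathrm{st}}$ denotes isomorphism modulo projectives. Iterating this identity gives $\nabla^{\nu}(\Omega^i W) \cong_{\mathrm{st}} \Omega^{i|\nu|}\,\nabla^{\lambda}(W)$, where $\lambda = \nu$ if $i$ is even and $\lambda = \nu'$ if $i$ is odd. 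Specialising to $W = \Sym^l E$ this reads $\nabla^{\nu} V \cong_{\mathrm{st}} \Omega^{i|\nu|}\,\nabla^{\lambda}\Sym^l E$.

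Once the transpose identity is in hand the argument is immediate: since $\Omega$ is an auto-equivalence of the stable module category, $\nabla^{\nu}V$ is projective if and only if $\nabla^{\lambda}\Sym^l E$ is projective, which by Theorem~\ref{projective classification thm} happens precisely when $\lambda_1 \geq p-l$ or $\ell(\lambda) \geq l+2$. Thus the whole theorem is reduced to the transpose identity.

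So the main task is to prove that identity. The guiding intuition is that in the stable category $\Omega W \cong W \otimes \Omega k$, and that the Heller shift $\Omega k$ behaves like a Koszul-type degree-one shift: the symmetric group $S_n$ acts on $(\Omega k)^{\otimes n}$ up to a sign twist relative to its action on $\Omega^n k$. This exchanges the trivial and sign idempotents in $kS_n$ and, more generally, swaps the Young symmetriser for $\nu$ with the one for $\nu'$, turning $\nabla^{\nu}$ into $\nabla^{\nu'}$. To make this precise I would take the projective cover $P \twoheadrightarrow W$ with kernel $\Omega W$, analyse the induced exact sequence relating $(\Omega W)^{\otimes n}$ with $\Omega^n(W^{\otimes n})$ (modulo projectives), and verify that the two natural $S_n$-actions differ by the sign character. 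A tidier bookkeeping alternative, particular to $G=\SL$, is to work inside $\overline{R(G)}$ using the explicit isomorphism $\Psi$ of Theorem~\ref{rep ring thm}: the generator $\overline{\Omega k}=X$ is invertible and the $\U_l$ lie in an $X$-independent subring, so one can check the identity by comparing classes in $\overline{R(G)}$, which is enough because projectivity is detected by vanishing there.

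The main obstacle is precisely this sign-twist behaviour of $S_n$ on $(\Omega k)^{\otimes n}$ in the stable module category; once it is established, everything else is formal.
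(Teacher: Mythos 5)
Your proposal is correct and follows essentially the same route as the paper: the transpose identity $\nabla^{\nu}(\Omega W) \piso \Omega^{|\nu|}\,\nabla^{\nu'}(W)$ is exactly the paper's Corollary~\ref{Heller endo cor}, and iterating it plus the fact that $\Omega$ preserves and reflects projectivity reduces the statement to Theorem~\ref{projective classification thm}, just as the paper does. Your sketch of the transpose identity (the $\Omega k$ sign-twist) is the right intuition, which the paper makes precise via Proposition~\ref{endo prop}(iii) by observing $\dim \Omega k \equiv -1 \pmod p$ and using Schur--Weyl duality with the Kronecker expansion to pick out $\bigwedge^n(\Omega k)\otimes\nabla^{\nu'}W$.
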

	
	In the statement of our fourth and final result we say a pair $(\nu,l)$ of a $p$-small partition $\nu$ and $l$ with $0\leq l\leq p-2$ is \textit{stably-irreducible} if $\nabla^{\nu} \Sym^l E$ is stably-irreducible. Thus the stably-irreducible pairs are classified by Theorem~\ref{full irred modular plethysms of E thm}.
	
	\begin{theorem}\label{final classification thm}
		Let $V$ be a non-projective indecomposable $kG$-module and let $\nu$ be a $p$-small partition. Write $V\cong\Omega^i (\Sym^l E)$ for some $0\leq i\leq p-2$ and $0\leq l\leq p-2$. Then the module $\nabla^{\nu} V$ is stably-irreducible if and only if $p-1 \mid i|\nu|$ and $(\lambda, l)$ is a stably-irreducible pair, where $\lambda = \nu$ if $i$ is even and $\lambda = \nu'$ if $i$ is odd.
	\end{theorem}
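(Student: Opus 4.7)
The plan is to reduce Theorem~\ref{final classification thm} to Theorem~\ref{full irred modular plethysms of E thm} by transporting the problem across the Heller translation. The essential ingredient I expect to invoke is a Koszul-type commutation identity in the stable module category,
\[
\nabla^{\nu}\bigl(\Omega^i M\bigr) \;\piso\; \Omega^{i|\nu|}\, \nabla^{\lambda}(M),
\]
valid for any module $M$, with $\lambda=\nu$ if $i$ is even and $\lambda=\nu'$ if $i$ is odd. The transposition $\nu\mapsto\nu'$ for odd $i$ reflects the standard sign phenomenon: each Heller shift swaps symmetric and exterior powers modulo projectives (so $\Sym^n(\Omega M)\piso \Omega^n\Lambda^n M$), and more generally swaps a Schur functor with its conjugate.

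Granting this identity, the theorem follows in two steps. Because $\Omega$ is an auto-equivalence of the stable module category, the twist $\Omega^{i|\nu|}$ preserves the number of non-projective indecomposable summands, so $\nabla^{\nu}V$ has exactly one non-projective summand iff $\nabla^{\lambda}(\Sym^l E)$ does. Writing that summand as $\Omega^r(\Sym^s E)$ with $0\le r,s\le p-2$, the corresponding summand of $\nabla^{\nu}V$ is $\Omega^{i|\nu|+r}(\Sym^s E)$, which by Corollary~\ref{kG-Omega cor}(ii) is irreducible iff $i|\nu|+r\equiv 0\pmod{p-1}$. To match the clean criterion of the theorem, one must additionally rule out the ``twisted'' possibility $r\not\equiv 0\pmod{p-1}$; that is, argue that in our range $|\nu|<p$, $0\le l\le p-2$, any unique non-projective summand of $\nabla^{\lambda}(\Sym^l E)$ is automatically already irreducible ($r=0$). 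This auxiliary claim should fall out of the structural analysis behind Theorem~\ref{full irred modular plethysms of E thm}; with it in hand, the condition separates cleanly into $p-1\mid i|\nu|$ together with $(\lambda,l)$ being stably-irreducible.

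The main obstacle is the commutation identity itself. The base case $\nu=(n)$ reduces to $\Sym^n(\Omega M)\piso \Omega^n\Lambda^n M$, which I would prove by induction on $n$ via the short exact sequence $0\to\Omega M\to P\to M\to 0$ (with $P$ the projective cover of $M$) and the symmetric-power filtration of an extension, discarding projective summands at each step. A brief consistency check for $\nu=(2,1)$ via the Pieri-type decomposition $\Lambda^2 M\otimes M \cong \nabla^{(2,1)}M\oplus\Lambda^3 M$ shows that the formulas for $\Sym$ and $\Lambda$ do bootstrap correctly to general $\nabla^{\nu}$. Lifting the general identity is nevertheless the delicate part; I would attempt it either through a Jacobi--Trudi-type expansion of $\nabla^{\nu}$ as a signed combination of products of symmetric powers, or by verifying the identity directly inside the representation ring $\overline{R(G)}$ described by Theorem~\ref{rep ring thm}, where the relation $X^{p-1}=1$ captures precisely the $(p-1)$-periodicity that makes the exponent $i|\nu|$ visible only modulo $p-1$.
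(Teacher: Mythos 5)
Your argument is essentially the paper's proof: the commutation identity you invoke is exactly Corollary~\ref{Heller endo cor} applied $i$ times, and the auxiliary claim that the unique non-projective summand of $\nabla^{\lambda}\Sym^l E$ is automatically irreducible (your ``$r=0$'') is precisely Lemma~\ref{initial modular plethysms lemma}(i), both already established well before this theorem. The energy you spend speculating about how to prove the commutation identity (filtrations, Jacobi--Trudi, direct computation in $\overline{R(G)}$) and where the $r=0$ fact ``should fall out'' is unnecessary --- cite those two results and the rest is exactly the paper's bookkeeping with $\Omega$-orbits of size $p-1$ via Corollary~\ref{kG-Omega cor}.
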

	
	The key step in proving Theorem~\ref{projective classification thm} and Theorem~\ref{full irred modular plethysms of E thm} is to translate questions about projectivity and stable-irreducibility to questions about the multiset of the hook lengths of $\nu$ and the multiset of the shifted contents of $\nu$ (as introduced in \S\ref{SHCF sec}). This cannot be done by just working with polynomials as in \cite{PagetWildonPlethysms21} but one can instead use results about the group of cyclotomic units of $\Z[\z + \z^{-1}]$.
	
	The final two main results follow from the first two using a result about exchanging Schur functors and the Heller operator $\Omega$ (Corollary~\ref{Heller endo cor}). This result is of independent interest.
	
	\subsection{Notation}\label{notation sec}
	
	We use the notation $R(G)$ for the representation ring of $kG$ and $\overline{R(G)}$ for the quotient ring of $R(G)$ by the ideal generated by the isomorphism classes of the projective $kG$-modules. For each $kG$-module $V$ let $\overline{V}\in \overline{R(G)}$ denote the image of the quotient map applied to the isomorphism class of $V$ in $R(G)$. For $0\leq l\leq p-1$ we write $\U_l$ to denote $\overline{\Sym^l E}$. Hence $\U_0$ is the multiplicative identity of $\overline{R(G)}$ and $\U_{p-1}=0$. Note that we index $\U_l$ by the corresponding exponent of the symmetric power not the dimension.
	
	The ring $\overline{R(G)}$ is freely generated as an abelian group by elements of the form $\overline{V}$ where $V$ ranges over the non-projective indecomposable $kG$-modules, up to isomorphism. We define abelian subgroups $R_I$ and $R_E$ of $\overline{R(G)}$ as the subgroups generated freely by $\U_0, \U_1, \dots, \U_{p-2}$ and by $\U_0, \U_2, \dots, \U_{p-3}$, respectively. We see in Corollary~\ref{CG rule cor}(iii)~and~(v) that $R_I$ and $R_E$ are in fact subrings of $\overline{R(G)}$.
	
	We introduce the following notation to help us distinguish between isomorphisms of modules and isomorphisms of modules modulo projectives. We use $\cong$ for usual isomorphisms of modules, while $\piso$ is used for isomorphisms in the stable module category. Thus if $U$ and $V$ are two $kG$-modules, we write $U\piso V$ if there are projective $kG$-modules $P$ and $Q$ such that $U\oplus P\cong V\oplus Q$. Clearly $U\piso V$ if and only if $\overline{U}=\overline{V}$.

	\subsection{Background and results in the literature}\label{intro sub}
	
	For a detailed introduction to partitions, Schur functors, Schur functions and formal characters see \cite[\S\S2-3]{deBoeckPagetWildonPlethysms21}.
	
	Schur functors can be defined in various ways, see \cite[Definition~1.16.1]{BensonBundles17} or \cite[Definition~2.3]{deBoeckPagetWildonPlethysms21}. They can be studied using formal characters defined, for instance, in \cite[\S3.4]{GreenPolynomial80} and in turn using Schur functions defined, for instance, in \cite[Definition~7.10.1]{StanleyEnumerativeII99}. Working over the field $k$, this is particularly useful when we restrict to Schur functors $\nabla^{\nu}$ with $\nu$ a $p$-small partition since the semisimplicity result \cite[(2.6e)]{GreenPolynomial80} applies (even when $k$ is a finite field). In such a case identities involving Schur functions yield analogous identities of Schur functors (for instance, see Lemma~\ref{Schur functor expansion lemma}). This is the reason why we restrict ourselves to $p$-small partitions. 
	
	For a description of the non-projective indecomposable $kG$-modules using symmetric powers see Glover \cite[(3.3) and (3.9)]{GloverRepresentations78}. There it is established that in the stable module category of $kG$ the non-projective indecomposable $kG$-modules correspond, up to isomorphism, to the symmetric powers $\Sym^l E$ with $0\leq l< p(p-1)$ and $l$ not congruent to $-1$ modulo $p$. When it comes to the projective indecomposable $kG$-modules, their Loewy series are given, for instance, in \cite[pp.~48-52]{AlperinLocal86}. 
	
	The work of Almkvist \cite{AlmkvistReciprocity81}, \cite{AlmkvistComponents78} and \cite{AlmkvistFossumCyclic78} (revisited by Hughes and Kemper in \cite{HughesKemperCyclic00}) and results of Benson \cite[Ch.~2]{BensonBundles17} regarding the representation theory of $k[\Z/p\Z]$ can be used to study modular plethysms of $E$. One can recover most of our preliminary results using restrictions of representations and arguments used by Kouwenhoven. However, here we present a self-contained approach which does not rely on any deep results regarding the representation theory of $k[\Z/p\Z]$.    
	
	It is worth mentioning that in the literature one can find results regarding modular plethysms of $E$ which do not restrict themselves to $\nu$ being $p$-small, $0\leq l\leq p-2$ or working modulo projectives. For instance, Kouwenhoven \cite{KouwenhovenLambda90b} examines all symmetric and exterior powers of the indecomposable $kG$-modules. In a paper by Hughes and Kemper \cite{HughesKemperSylow01} the authors describe the Hilbert series of the polynomial invariants of any fixed $kG$-module (in fact, their results hold for any group $H$ with a Sylow $p$-subgroup of order $p$ in place of $G$). 
	
	A slightly different task is accomplished by McDowell and Wildon in \cite{McDowellWildonIsomorphisms22}. They examine whether there exist modular versions of various families of isomorphisms of modular plethysms of the natural $\mathbb{C} $SL$_2(\mathbb{C})$-module (such as the Hermite reciprocity or the Wronskian isomorphism).
	
	We should also note that the use of a $p$th root of unity to describe projective objects of a group ring (in our case modular plethysms of $E$ in Theorem~\ref{projective classification thm}) has already appeared in the literature. In particular, Theorem of Rim \cite[p.~29]{MilnorKTheory71} is a result in algebraic $K$-theory which states that there is an isomorphism between $K_0\Z\Pi$ and $K_0\Z[\z]$ where $\Pi = \Z/p\Z$.
	
	While many different approaches to studying the representation theory of $kG$ appear in the literature, let us mention that our results about endotrivial modules in \S\ref{endo sec} provide a unification to some of them. In particular, several arguments of Kouwenhoven \cite{KouwenhovenLambda90a}, \cite{KouwenhovenLambda90b} and Glover \cite{GloverRepresentations78} can be deduced from our results and the facts that the module $\Sym^p E$ is endotrivial and for any non-negative integer $a$ we have $\Sym^a E\otimes \Sym^p E \piso\Sym^{a+p} E$ (see \cite[Corollary~1.4.1(b) and Corollary~1.5(a)]{KouwenhovenLambda90a}).  
	
	\subsection{Outline}
	
	The paper is divided into five sections, the first of which is the introduction.
	
	In the second section we establish preliminary results. We start by recalling the Clebsch--Gordan rule and results obtained using the Green correspondence and the Heller operator $\Omega$. The rest of the section focuses on results regarding Schur functors and Schur functions. We recall Stanley's Hook Content Formula, Schur--Weyl duality and basics from the theory of $\lambda$-rings and $p$-$\lambda$-rings. 
	
	The goal of the third section is to describe how to interchange Schur functors and the operation of tensoring with an endotrivial module (in particular, with $\Omega k$).
	
	In the fourth section we prove Theorem~\ref{rep ring thm}, establishing the structure of the representation ring of $kG$ modulo projectives. We end by providing a diagrammatic interpretation of Theorem~\ref{rep ring thm}.  
	
	The goal of the final section is to prove our four main results. We start by providing a description of modular plethysms of $E$ using a homomorphism $\Theta$ from the fourth section. Theorem~\ref{projective classification thm} and Theorem~\ref{final projective classification thm} are then easily deduced. After introducing the cyclotomic units of the ring $\Z[\z + \z^{-1}]$ we prove the remaining two main results.
	
	\section{Preliminaries}
	
	Recall we use $G$ to denote the group $\SL$ and $E$ to denote the natural two-dimensional $kG$-module. We have already mentioned the following result about the irreducible $kG$-modules.
	
	\begin{theorem}\label{irreducible kG modules thm}
		The irreducible $kG$-modules, up to isomorphism, are given by $\Sym^0 E, \Sym^1 E, \dots, \Sym^{p-1} E$. Of these $kG$-modules only $\Sym^{p-1} E$ is projective.
	\end{theorem}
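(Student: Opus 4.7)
The plan is to handle the three assertions in turn: the irreducibility of each $\Sym^l E$ for $0\leq l\leq p-1$, the exhaustiveness of this list of $p$ simple modules, and the fact that $\Sym^{p-1}E$ is the unique projective among them. The uniform tool is the action of the Sylow $p$-subgroup $U\leq G$ of upper unitriangular matrices, which is cyclic of order $p$ with generator $u=\bigl(\begin{smallmatrix}1&1\\0&1\end{smallmatrix}\bigr)$.

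For irreducibility I would fix a basis $x,y$ of $E$ and the induced monomial basis $\{x^{l-i}y^i:0\leq i\leq l\}$ of $\Sym^l E$. Since $u$ sends $x\mapsto x$ and $y\mapsto x+y$, and all binomial coefficients $\binom{r}{s}$ with $0\leq s\leq r\leq l<p$ are nonzero in $k$, the operator $u-1$ on $\Sym^l E$ is nilpotent of index exactly $l+1$, so $u$ acts as a single unipotent Jordan block. Hence $(\Sym^l E)^U$ is one-dimensional, spanned by $x^l$, and any nonzero $kG$-submodule must contain $x^l$. Applying the opposite unipotent $\bar u=\bigl(\begin{smallmatrix}1&0\\1&1\end{smallmatrix}\bigr)$ yields the family $\bar u^m\cdot x^l=(x+my)^l$ for $m\in\mathbb{F}_p$; for $l<p$ a Vandermonde calculation shows $l+1$ of these are linearly independent and thus span $\Sym^l E$, forcing the submodule to be everything. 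The distinct dimensions $1,2,\dots,p$ give pairwise non-isomorphism.

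To confirm the list is exhaustive I would invoke Brauer's theorem, which equates the number of simple $kG$-modules with the number of $p$-regular conjugacy classes of $G$. A direct enumeration partitions the classes of $\SL$ into $\{I\}$, $\{-I\}$, four classes of elements of order $p$ or $2p$ (with unipotent part a conjugate of $\pm u$ or of its non-square twist), $(p-3)/2$ split semisimple classes $\bigl(\begin{smallmatrix}a&0\\0&a^{-1}\end{smallmatrix}\bigr)$ with $a\neq\pm 1$, and $(p-1)/2$ non-split semisimple classes arising from elements of order dividing $p+1$; discarding the four $p$-singular classes leaves $2+(p-3)/2+(p-1)/2=p$ classes, matching our list.

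For the projectivity claim I would restrict to $U$ and use the standard criterion that a $kG$-module is projective if and only if its restriction to a Sylow $p$-subgroup is projective, which for $|U|=p$ is equivalent to freeness. The Jordan-block analysis above identifies $\Sym^l E\res_{kU}$ with a single block of size $l+1$, and such a $kU$-module is free precisely when $l+1=p$. I expect the main subtlety to lie in the conjugacy-class count of the exhaustiveness step, since this is the only point where arithmetic specific to $\SL$ (rather than a generic appeal to modular representation theory) intervenes; the rest of the argument is driven by the single Jordan-block calculation on $U$.
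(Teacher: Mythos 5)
Your argument is correct, and since the paper's own ``proof'' of Theorem~\ref{irreducible kG modules thm} is nothing more than a citation to Alperin (pp.~14--16 for the classification, p.~79 for the projectivity), the substance you supply is exactly the standard argument underlying that reference: irreducibility from the single unipotent Jordan block on $\Sym^l E$, exhaustiveness from the count of $p$-regular classes, and projectivity from restriction to a Sylow $p$-subgroup together with the Jordan-block picture. The one point to watch is that the theorem is stated for an arbitrary field $k$ of characteristic $p$, so the version of Brauer's theorem you invoke (number of simples equals number of $p$-regular classes) does not apply to $k$ directly unless $k$ is a splitting field; however, your Jordan-block argument works verbatim over $\overline{k}$, showing each $\Sym^l E$ is absolutely irreducible, which both makes $\mathbb{F}_p$ (hence $k$) a splitting field and rules out any further simples defined over extensions. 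With that remark inserted, the class count $2+4+(p-3)/2+(p-1)/2=p+4$ with four $p$-singular classes, giving $p$ $p$-regular classes, is correct and completes the exhaustiveness step.
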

	
	\begin{proof}
		See \cite[pp.~14-16]{AlperinLocal86} for the first statement and \cite[p.~79]{AlperinLocal86} for the final statement.
	\end{proof}

	Note that for any $0\leq l\leq p-1$ the dimension of $\Sym^l E$ is $l+1$ and $\Sym^0 E$ is the trivial $kG$-module which we may denote by $k$.
	\subsection{Clebsch--Gordan rule}
	
	The decomposition of the tensor products of the form $\Sym^i E\otimes \Sym^j E$ with $0\leq i,j\leq p-1$ into indecomposable summands, known as a Clebsch--Gordan rule, has been established, for instance, in \cite[Theorem~3.7]{McDOwellWalk22}, \cite[(5.5) and (6.3)]{GloverRepresentations78} and \cite[Corollary~1.2(a) and Proposition~1.3(c)]{KouwenhovenLambda90a}. We will use the following version set in the stable module category of $kG$.
	
	\begin{theorem}[Stable Clebsch--Gordan rule]\label{CG rule thm}
		Let $0\leq i\leq j\leq p-2$. Writing $S$ for the tensor product $\Sym^i E\otimes \Sym^j E$ we have the decomposition
		\begin{align*}
		S \piso
		\begin{cases}
		\Sym^{j-i} E \oplus \Sym^{j-i+2} E \oplus \dots \oplus \Sym^{i+j} E &\text{if } i+j < p-2,\\
		\Sym^{j-i} E \oplus \Sym^{j-i+2} E \oplus \dots \oplus \Sym^{2p-4-i-j} E &\text{if } i+j \geq p-2.\\
		\end{cases}
		\end{align*}
	\end{theorem}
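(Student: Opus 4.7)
My plan is to prove the decomposition by induction on $i$, working in $\overline{R(G)}$. The base case $i=0$ is immediate: both formulas collapse to $\Sym^j E$, agreeing at $j = p-2$ because $2p-4-j = j$ there.

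For the inductive step the key input is the $i=1$ identity: for $0 \leq j \leq p-2$,
\[
E \otimes \Sym^j E \cong \Sym^{j+1} E \oplus \Sym^{j-1} E
\]
as $kG$-modules, with the convention $\Sym^{-1} E = 0$. The multiplication map $E \otimes \Sym^j E \to \Sym^{j+1} E$ is surjective with kernel isomorphic to $\Sym^{j-1} E$ (via the natural map $f \mapsto x \otimes yf - y \otimes xf$ on a basis $x,y$ of $E$), and the polarisation map $v_0 \cdots v_j \mapsto \tfrac{1}{j+1}\sum_{i=0}^{j} v_i \otimes (v_0 \cdots \hat{v}_i \cdots v_j)$ provides a $kG$-linear section because $j+1 \in \{1,\dots,p-1\}$ is a unit in $k$. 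Passing to $\overline{R(G)}$ and using $\Sym^{p-1} E \piso 0$ (by Theorem~\ref{irreducible kG modules thm}), one obtains the stable version $E \otimes \Sym^j E \piso \Sym^{j+1} E \oplus \Sym^{j-1} E$ for all $0 \leq j \leq p-2$, adopting the additional convention $\Sym^{p-1} E \piso 0$.

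For the induction I would compute $(E \otimes \Sym^{i-1} E) \otimes \Sym^j E$ in two ways in $\overline{R(G)}$. First, applying the $i=1$ identity to the parenthesised factor gives $\Sym^i E \otimes \Sym^j E \oplus \Sym^{i-2} E \otimes \Sym^j E$. Second, reassociating and invoking the inductive hypothesis on $\Sym^{i-1} E \otimes \Sym^j E$, then applying the $i=1$ identity term by term to each $E \otimes \Sym^l E$ in the resulting sum, gives an explicit list of symmetric powers. Equating the two expressions and solving for $\Sym^i E \otimes \Sym^j E$ produces the claimed formula.

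The main obstacle I foresee is the bookkeeping at the boundary $i+j = p-2$, where the two cases of the formula meet. As $j$ grows past $p-2-i$, the top summand $\Sym^{i+j} E$ of the classical (characteristic $0$) decomposition has exponent at least $p-1$, and its contribution must be absorbed, together with a mirror partner, into a projective that vanishes in $\overline{R(G)}$; tracking this absorption is what produces the reflection $l \mapsto 2p-4-l$ that truncates the sum in the second case. A completely alternative route is to cite the ordinary-category Clebsch--Gordan rule for $\SL$ from Glover or Kouwenhoven and directly delete the projective summands, which bypasses the induction but incurs essentially the same boundary bookkeeping.
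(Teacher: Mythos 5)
The paper does not prove this theorem at all: it simply cites Glover, Kouwenhoven, and McDowell--Wildon for the module-category Clebsch--Gordan rule and presents Theorem~\ref{CG rule thm} as the stable-category version of a known result. Your proposal therefore takes a genuinely different route, namely a self-contained proof by induction on $i$. The key technical inputs are correct: the multiplication map $E\otimes\Sym^j E\to\Sym^{j+1}E$ does split $kG$-linearly for $0\leq j\leq p-2$ via $\frac{1}{j+1}$ times the polarisation (with $j+1$ a unit in $k$), and the kernel is $\Sym^{j-1}E$ via $f\mapsto x\otimes yf - y\otimes xf$, which is $G$-equivariant because $x\otimes y - y\otimes x$ spans $\bigwedge^2 E\cong k$ for $\SL$. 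Passing to $\overline{R(G)}$ and invoking Krull--Schmidt then lets you subtract the $\Sym^{i-2}E\otimes\Sym^j E$ term and solve; the associativity trick $(E\otimes\Sym^{i-1}E)\otimes\Sym^j E$ is a standard $\mathrm{SL}_2$ device and the boundary bookkeeping at $i+j = p-2$, while fiddly, works out exactly as you predict (the term $E\otimes\Sym^{p-2}E$ contributes only $\Sym^{p-3}E$ stably, truncating the top of the sum). Be aware that the paper's Example~\ref{Tensoring with E example} records the $i=1$ identity but derives it \emph{from} Theorem~\ref{CG rule thm}, so you cannot cite it without circularity --- your independent derivation via the split exact sequence is exactly what is needed. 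What your route buys is self-containment; what the citation route buys is brevity and the additional information about which projective summands appear, which your stable argument discards by design.
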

	
	\begin{example}\label{Tensoring with E example}
		By taking $i=1$ and $1\leq j\leq p-3$ in Theorem~\ref{CG rule thm}, we compute $E\otimes \Sym^j E\piso \Sym^{j-1} E \oplus \Sym^{j+1} E$. In fact, since both sides have dimension $2j+2$ and the right-hand side has no projective indecomposable summands, we have $E\otimes \Sym^j E\cong \Sym^{j-1} E \oplus \Sym^{j+1} E$.
	\end{example}
	
	We can immediately make a few simple observations using our notation from \S\ref{notation sec}.
	
	\begin{corollary}\label{CG rule cor}
		Suppose that $0\leq i\leq j\leq p-2$ and $0\leq l\leq p-2$.
		\begin{enumerate}[label=\textnormal{(\roman*)}]
			\item \begin{align*}
			\U_i \cdot \U_j =
			\begin{cases}
			\U_{j-i} + \U_{j-i+2} + \dots + \U_{i+j} &\text{if } i+j < p-2,\\
			\U_{j-i} + \U_{j-i+2} + \dots + \U_{2p-4-i-j} &\text{if } i+j \geq p-2.\\
			\end{cases}
			\end{align*}
			
			\item $\U_l \cdot \U_{p-2} = \U_{p-2-l}$.
			
			\item The abelian group $R_I$ is a subring of $\overline{R(G)}$.
			
			\item The ring $R_I$ is generated by $\U_1$ as a $\Z$-algebra.
			
			\item The abelian group $R_E$ is a subring of $\overline{R(G)}$ and $R_I$. 
		\end{enumerate}
	\end{corollary}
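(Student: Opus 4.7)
The plan is to obtain every part of the corollary directly from Theorem~\ref{CG rule thm} (Stable Clebsch--Gordan) by translating the statement into the notation introduced in \S\ref{notation sec}. Part (i) is an immediate restatement: Theorem~\ref{CG rule thm} gives the decomposition of $\Sym^i E \otimes \Sym^j E$ into a sum of non-projective irreducibles modulo projectives, and applying the ring quotient $V \mapsto \overline{V}$ converts $\oplus$ to $+$ and $\otimes$ to $\cdot$, producing the two displayed formulae.

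For (ii), specialise (i) with $j = p-2$. Since $l + (p-2) \geq p-2$, the second case applies, and the summation runs from $\U_{p-2-l}$ to $\U_{2p-4-l-(p-2)} = \U_{p-2-l}$, so the sum collapses to the single term $\U_{p-2-l}$. Part (iii) is then automatic: the index $k$ of every summand $\U_k$ appearing in (i) satisfies $0 \leq k \leq p-2$ (in both cases the largest index is bounded by $p-2$), so $R_I$ is closed under multiplication, and it contains the unit $\U_0$.

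For (iv), I would set up a three-term recurrence. Taking $i = 1$ in (i), for $1 \leq j \leq p-3$ one has $\U_1 \cdot \U_j = \U_{j-1} + \U_{j+1}$ (first case when $j \leq p-4$, second case when $j = p-3$, both giving the same two terms). Rearranging as $\U_{j+1} = \U_1 \cdot \U_j - \U_{j-1}$ and inducting on $j$ expresses every $\U_j$ with $j \leq p-2$ as a polynomial in $\U_1$ with integer coefficients.

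Part (v) requires an additional parity check. If $i,j$ are even with $0 \leq i,j \leq p-3$, every summand $\U_k$ in (i) has $k$ of the same parity as $i+j$, hence even. It remains to confirm $k \leq p-3$; in the first case the maximum is $i+j \leq p-3$ since $i+j$ is even and $<p-2$, and in the second case the maximum is $2p-4-i-j$, which is even and at most $p-2$, hence at most $p-3$ because $p-2$ is odd. Thus $R_E$ is closed under multiplication, and the inclusion $R_E \subseteq R_I$ is clear from the definitions. No single step is really an obstacle here; the only subtle point is ensuring in (v) that the parity argument truly rules out $\U_{p-2}$ appearing in products of even-indexed generators.
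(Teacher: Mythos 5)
Your proposal is correct and follows essentially the same route as the paper: restate the Clebsch--Gordan rule for (i), specialise to $j=p-2$ for (ii), observe closure under multiplication for (iii) and (v), and use the three-term recurrence $\U_{j+1}=\U_1\cdot\U_j-\U_{j-1}$ for (iv). The only cosmetic difference is that you phrase (iv) as a direct induction while the paper argues by minimal counterexample, and in (v) you make explicit the check (which the paper leaves implicit) that the largest even index produced is at most $p-3$ because $p-2$ is odd; both of these are welcome clarifications but not substantive deviations.
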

	
	\begin{proof}\leavevmode
		\begin{enumerate}[label=\textnormal{(\roman*)}]
			\item This is just Theorem~\ref{CG rule thm} with a different notation.
			\item Take $i=l$ and $j=p-2$ (so $i\leq j$) in (i). Then $i+j\geq p-2$ and we are in the second case. Since $2j=2p-4$ we have $j-i = 2p-4-i-j$, and hence we get only one summand which is $\U_{p-2-l}$.
			\item From (i) we see that $R_I$ is closed under multiplication.
			\item Suppose that the subring $\Z[\U_1]$ is a proper subring of $R_I$, and thus there is the least $0\leq m\leq p-2$ such that $\U_m$ does not lie in $\Z[\U_1]$. Certainly, $m\geq 2$ and by Example~\ref{Tensoring with E example} with $j=m-1$ (lying in the correct range) we have that $\U_m=\U_1 \cdot \U_{m-1} - \U_{m-2}$. By the minimality of $m$, the right-hand side lies in $\Z[\U_1]$ and hence so does $\U_m$, a contradiction.
			\item From (i) we see that $R_E$ is closed under multiplication since if both $i$ and $j$ are even, so is $j-i$ and hence so are the indices of all the summands on the right-hand side in the formula in (i).
			\qedhere
		\end{enumerate}
	\end{proof}
	
	\begin{example}\label{Ideal example}
		Let us use Corollary~\ref{CG rule cor}(ii) to describe the ideal of $R_I$ generated by $\U_0 + \U_{p-2}$. We compute that for any $0\leq j\leq p-2$ the product $(\U_0+\U_{p-2})\cdot \U_j$ equals $\U_j + \U_{p-2-j}$. Since $p$ is odd, the ideal considered as an abelian group is freely generated by the elements $\U_{2i} + \U_{p-2-2i}$ with $0\leq i \leq (p-3)/2$.
	\end{example}
	
	\subsection{Green correspondence and the indecomposable $kG$-modules}\label{Green sec}
	
	Using the Green correspondence to study the representation theory of $kG$ is common in the literature (for instance, see \cite[pp.~75-79]{AlperinLocal86} or \cite[pp.~434-438]{GloverRepresentations78}), thus we only summarise the notation and the basic results we need.
	
	Let $c$ be a generator of the cyclic group $\left( \mathbb{F}_p\right)  ^{\times}$. Consider the following two elements of $G$:
	\[g=
	\begin{pmatrix}
	1 & 0\\
	1 & 1\\
	\end{pmatrix}
	\text{ and } h=
	\begin{pmatrix}
	c^{-1} & 0\\
	0 & c\\
	\end{pmatrix}.
	\]
	
	Then $g$ generates a group of order $p$, call it $P$. The group $P$ is a Sylow $p$-subgroup of $G$ and its normaliser, which we refer to as $N$, is generated by $g$ and $h$. For each $i\in \Z/(p-1)\Z$ we define a one-dimensional irreducible $kN$-module $S_i$ by letting $g$ act trivially and $h$ act by a multiplication by $c^i$. We can describe the indecomposable $kN$-modules using the following proposition.
	
	\begin{proposition}\label{mod-kN prop}
		For each $i\in \Z/(p-1)\Z$ and $0\leq j\leq p-1$ there is a unique, up to isomorphism, uniserial $kN$-module with composition factors $S_i, S_{i-2}, S_{i-4}, \dots, S_{i-2j}$ from top to bottom. These $p(p-1)$ modules form all the indecomposable $kN$-modules, up to isomorphism. Moreover, an indecomposable $kN$-module with factors $S_i, S_{i-2}, S_{i-4}, \dots, S_{i-2j}$ from top to bottom is projective precisely when $j=p-1$. 
	\end{proposition}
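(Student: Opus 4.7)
I would identify the simple $kN$-modules, describe the projective indecomposables together with their Loewy series, and then deduce the classification via the fact that a self-injective algebra whose projective indecomposables are uniserial is a Nakayama algebra. First, since $P=\langle g\rangle$ is a normal $p$-subgroup of $N$, it acts trivially on every simple $kN$-module, so these are the inflations of the simple $k[N/P]$-modules. As $N/P\cong\langle h\rangle$ has order $p-1$ coprime to $p$, Maschke's theorem makes $k[N/P]$ semisimple, with its $p-1$ simples being precisely the one-dimensional $S_i$ for $i\in\Z/(p-1)\Z$.

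Next, let $P_i$ denote the projective cover of $S_i$. Since $P_i$ has one-dimensional top it is cyclic, so its restriction to $kP$ is a cyclic projective $kP$-module, hence a single copy of $kP$ (the unique indecomposable projective $kP$-module), giving $\dim P_i=p$. The normality of $P$ together with the semisimplicity of $k[N/P]$ implies $\rad(kN)=(g-1)\cdot kN$, so $\rad^t(P_i)=P_i(g-1)^t$; using $(g-1)^p=0$, each radical quotient $\rad^t(P_i)/\rad^{t+1}(P_i)$ is one-dimensional. To identify the $h$-eigenvalue on the $t$-th layer, I would expand $g^{c^{-2}}=(1+(g-1))^{c^{-2}}$ via the binomial theorem in characteristic $p$ to obtain $h^{-1}(g-1)h=g^{c^{-2}}-1\equiv c^{-2}(g-1)\pmod{(g-1)^2}$, and then iterate to $(g-1)^t h\equiv c^{-2t}h(g-1)^t\pmod{(g-1)^{t+1}}$. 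Right multiplication by $h$ therefore acts on the $t$-th radical layer by the scalar $c^{i-2t}$, so $P_i$ is uniserial with composition factors $S_i,S_{i-2},\dots,S_{i-2(p-1)}$ from top to bottom.

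For each pair $(i,j)$ with $0\leq j\leq p-1$, the quotient $M_{i,j}:=P_i/\rad^{j+1}(P_i)$ is then uniserial with the stated composition factors, and is projective precisely when $j=p-1$ (so that $M_{i,p-1}=P_i$). Because $kN$ is self-injective and all its projective indecomposables are uniserial, $kN$ is a Nakayama algebra, so every indecomposable $kN$-module is uniserial and, being a quotient of its projective cover, is isomorphic to some $M_{i,j}$. These $p(p-1)$ modules are pairwise non-isomorphic by inspection of top and length, completing the classification.

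The main technical obstacle is the Loewy computation above, and in particular verifying the sign in the shift $S_i\mapsto S_{i-2t}$ (as opposed to $S_{i+2t}$), which rests on the careful binomial expansion of $g^{c^{-2}}-1$ in characteristic $p$ and the subsequent induction on $t$.
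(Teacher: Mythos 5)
Your proof is essentially correct and follows the standard approach (which is also what the cited passage in Alperin does): identify the simples via inflation from $N/P$, compute the Loewy series of the projective covers using $\rad(kN)=(g-1)kN$ and the conjugation relation $h^{-1}gh=g^{c^{-2}}$, and then invoke the serial (Nakayama) structure of a self-injective algebra with uniserial projectives to classify all indecomposables. The sign check $S_i \mapsto S_{i-2t}$ is done correctly: $h^{-1}gh = g^{c^{-2}}$ indeed gives $h$-eigenvalue $c^{i-2t}$ on the $t$-th radical layer for a right module.

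There is one genuine gap in the step establishing $\dim P_i = p$. You argue that since $P_i$ is cyclic as a $kN$-module, its restriction $P_i\big\downarrow_P$ is a cyclic $kP$-module. That implication is false in general: a cyclic module over a group algebra need not restrict to a cyclic module over a subgroup (for instance, $kG$ itself is cyclic over $kG$ but $kG\big\downarrow_P$ is free of rank $[G:P]$). The conclusion $\dim P_i = p$ is nevertheless correct and can be recovered in either of two clean ways. First, a counting argument: $kN \cong \bigoplus_{i\in\Z/(p-1)\Z} P_i$ since every simple is one-dimensional, so $\sum_i \dim P_i = p(p-1)$; each $P_i\big\downarrow_P$ is a nonzero projective $kP$-module, hence free of rank at least $1$, so $\dim P_i \geq p$, and equality must hold for all $i$. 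Second, a head argument: $\rad(P_i\big\downarrow_P)= P_i\,\rad(kP)= P_i(g-1)= \rad(P_i)$, so the $kP$-head of $P_i\big\downarrow_P$ equals the one-dimensional $kN$-head of $P_i$, forcing $P_i\big\downarrow_P$ to be free of rank $1$. With either fix inserted, the remainder of your argument goes through.
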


	\begin{proof}
		See \cite[pp.~35-37,~76]{AlperinLocal86}.
	\end{proof}
	
	Write $U_{i,j}$ for a fixed indecomposable $kN$-module with composition factors $S_i, S_{i-2}, S_{i-4}, \dots, S_{i-2j}$ from top to bottom.   
	
	The next set of results we need concerns the Green correspondence in the case of our groups $G$ and $N$ and also the Heller operator $\Omega$.
	
	\begin{proposition}\label{Green prop}
		The restriction of $kG$-modules to $N$ yields a bijection between the non-projective indecomposable modules of $kG$ and $kN$ when working modulo projective modules and up to isomorphism. Under this bijection $\Sym^l E$ (where $0\leq l \leq p-2$) corresponds to $U_{l,l}$.
	\end{proposition}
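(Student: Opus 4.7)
My approach is to invoke the Green correspondence for the pair $(G, N)$. Since $|P|=p$ and $N=N_G(P)$, any two distinct $G$-conjugates of $P$ intersect trivially, so $P$ is a TI subgroup of $G$. The Green correspondence (see \cite[pp.~75-79]{AlperinLocal86}) then provides a bijection between the indecomposable $kG$-modules with vertex $P$ and the indecomposable $kN$-modules with vertex $P$, up to isomorphism.

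Because a Sylow $p$-subgroup of $G$ (and of $N$) has order $p$, every non-projective indecomposable module over $kG$ or $kN$ has a non-trivial vertex, and such a vertex must be conjugate to $P$. The Green bijection therefore runs precisely over the non-projective indecomposable modules on each side. Using the TI property once more, restriction from $G$ to $N$ realises this bijection modulo projectives: if $M$ is a non-projective indecomposable $kG$-module, then any indecomposable summand of $M\res_N$ other than the Green correspondent $f(M)$ would have vertex contained in some $P\cap P^g$ with $g\notin N$, and these intersections are trivial. Hence $M\res_N\cong f(M)\oplus Q$ with $Q$ projective, which is exactly what the statement of the proposition asserts about the bijection.

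It remains to identify $\Sym^l E\res_N$ with $U_{l,l}$. Since $\dim\Sym^l E = l+1\leq p-1$ and every projective indecomposable $kN$-module has dimension $p$ by Proposition~\ref{mod-kN prop}, the restriction has no projective summand and so coincides with its own Green correspondent. I would then work with the monomial basis $\{x^iy^{l-i} : 0\leq i\leq l\}$ of $\Sym^l E$: each monomial is an $h$-eigenvector, and the full list of eigenvalues is $c^l, c^{l-2},\dots,c^{-l}$, while with a suitable ordering of the basis the element $g$ acts unipotently, so that the spans of initial segments form a strictly increasing chain of $kN$-submodules. This exhibits $\Sym^l E\res_N$ as a uniserial $kN$-module of length $l+1$ with composition factors $S_l, S_{l-2},\dots, S_{-l}$ from top to bottom, which is precisely $U_{l,l}$. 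The main obstacle I anticipate is keeping track of conventions so that the composition-factor indices come out in the stated order; once that is pinned down, the argument is a routine combination of the Green correspondence and elementary linear algebra.
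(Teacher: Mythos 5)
Your proposal is correct, and it reconstructs in detail exactly what the paper cites: the paper's proof is simply a pointer to Alperin's book (Theorem~10.1 and p.~76), and your argument --- Green correspondence for the TI pair $(G,P,N)$, followed by an explicit computation of $\Sym^l E\res_N$ via the monomial basis and the $h$-eigenvalues $c^l, c^{l-2},\dots,c^{-l}$ --- is precisely the argument Alperin gives there. The one place worth pinning down (which you correctly flag) is the convention chase for the top-to-bottom order of composition factors; with the paper's right-module convention and $g$, $h$ as given, the chain $0\subset\langle e_1^l\rangle\subset\langle e_1^l, e_1^{l-1}e_2\rangle\subset\cdots$ is indeed a composition series with successive quotients $S_{-l},\dots,S_{l-2},S_l$ from bottom to top, matching $U_{l,l}$, and indecomposability of the restriction is already guaranteed by the Green correspondence once the projective-free claim is established.
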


	\begin{proof}
		See \cite[Theorem~10.1]{AlperinLocal86} for the first statement and \cite[p.~76]{AlperinLocal86} for the final statement.
	\end{proof}

	\begin{proposition}\label{Heller prop}
		Let $H$ denote a finite group of order divisible by $p$ and let $V$ and $W$ be two $kH$-modules.
		\begin{enumerate}[label=\textnormal{(\roman*)}]
			\item The module $V$ is projective if and only if $\Omega(V)$ is projective (in which case $\Omega(V)=0$).
			
			\item The Heller operator $\Omega$ defines a bijection on the class of the isomorphism classes of the non-projective indecomposable $kH$-modules. 
			
			\item $\Omega(V \oplus W) \cong \Omega V \oplus \Omega W$.
			
			\item The Heller operator $\Omega$ commutes with the bijection in Proposition~\ref{Green prop}.
			
			\item If $k$ denotes the trivial $kH$-module, then $(\Omega k)\otimes V\piso \Omega V$. 
		\end{enumerate}
	\end{proposition}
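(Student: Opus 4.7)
The plan is to work directly from the definition $\Omega V = \ker(\epsilon_V)$ where $\epsilon_V\colon P(V) \twoheadrightarrow V$ is a projective cover, using throughout that $kH$ is a Frobenius algebra and hence that projective and injective $kH$-modules coincide. For (i), if $V$ is projective then $\id_V$ serves as its own projective cover, giving $\Omega V = 0$; conversely, if $\Omega V$ is projective (hence injective) then the short exact sequence $0 \to \Omega V \to P(V) \to V \to 0$ splits, exhibiting $V$ as a summand of $P(V)$ and therefore projective. Part (iii) follows because $P(V) \oplus P(W)$ is a projective cover of $V \oplus W$ and kernels commute with direct sums. For (ii), the dual construction using injective envelopes provides a two-sided inverse $\Omega^{-1}$ on the non-projective indecomposables, and $\Omega$ preserves indecomposability because the projective cover of an indecomposable module is indecomposable.

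For (iv), I would take a short exact sequence $0 \to \Omega_G V \to P \to V \to 0$ with $P = P_G(V)$ and restrict it to $N$. Restriction is exact, and $P\res_N$ remains projective since $kG$ is free as a $kN$-module via any right transversal of $N$ in $G$. The restricted sequence thus has a projective middle term, which forces $\Omega_G V\res_N \piso \Omega_N(V\res_N)$ in the stable module category of $kN$; this is exactly the commutativity of $\Omega$ with the Green bijection of Proposition~\ref{Green prop}.

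Finally, (v) will be deduced by tensoring the short exact sequence $0 \to \Omega k \to P \to k \to 0$ (with $P = P(k)$) over $k$ with $V$. Tensoring over a field preserves exactness, and $P \otimes V$ is projective: via the standard isomorphism $kH \otimes V \cong (kH)^{\dim V}$, obtained by twisting the diagonal action on $V$ to the trivial one, any summand of $kH \otimes V$ is free, so $P \otimes V$ is projective whenever $P$ is. This gives $\Omega k \otimes V \piso \Omega V$. The only mildly delicate point in the whole plan is that in (iv) the restricted sequence is typically not itself the projective cover of $V\res_N$, but this discrepancy is invisible modulo projectives, which is all that the statement requires.
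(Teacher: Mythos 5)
The paper gives no proof here; it simply cites \cite[\S 20]{AlperinLocal86}. Your proposal is a self-contained version of the standard argument, and the overall plan is sound: (i), (iii) and (v) are exactly right, and in (iv) you correctly identify and dispatch the one subtle point, namely that $P\res_N$ is projective but generally not the \emph{minimal} cover of $V\res_N$. That is harmless because, by Schanuel's lemma, any short exact sequence with projective middle term computes $\Omega$ up to projective summands; and since the Green correspondence in the present trivial-intersection situation identifies $V\res_N$ with the correspondent plus a projective, your isomorphism $(\Omega_G V)\res_N \piso \Omega_N(V\res_N)$ does translate into commutativity with the bijection of Proposition~\ref{Green prop}.

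The one place the justification is too thin is (ii). Indecomposability of $P(V)$ does not by itself force its submodule $\Omega V$ to be indecomposable. The missing ingredient is that when $V$ has no projective summands, $\Omega V$ is an \emph{essential} submodule of $P(V)$: if some simple $S\subseteq\soc P(V)$ met $\Omega V$ trivially, its injective envelope $E(S)\subseteq P(V)$ would also avoid $\Omega V$ and would therefore embed in $V$ as an injective, hence projective, direct summand, contradicting the hypothesis on $V$. An essential submodule of an indecomposable injective module is itself indecomposable, which closes the gap. Alternatively, one can argue via (iii) and the inverse operator: a nontrivial direct-sum decomposition of $\Omega V$ would, on applying $\Omega^{-1}$, yield a nontrivial stable decomposition of $V$. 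Either route works, but as written the step needs one more sentence.
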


\begin{proof}
	See \cite[\S20]{AlperinLocal86}.
\end{proof}

\begin{remark}\label{Stable equivalences}
	In fact, the restriction in Proposition~\ref{Green prop} and the Heller operator in Proposition~\ref{Heller prop} define stable equivalences between $kG$ and $kN$, respectively, $kH$ and itself; see \cite[Theorem~10.3]{AlperinLocal86}, respectively, \cite[Lemma~20.6]{AlperinLocal86}. 
\end{remark}
	
	Applying the Heller operator $\Omega$ to the non-projective indecomposable $kN$-modules is easy to describe. 
	
	\begin{example}\label{Omega action example}
		Let $i\in \Z/(p-1)\Z$ and $0\leq j\leq p-2$. Using Proposition~\ref{mod-kN prop}, the projective cover of $U_{i,j}$ is $U_{i,p-1}$ and in turn $\Omega U_{i,j} = U_{i-2j-2,p-j-2}$. Applying $\Omega$ once more we obtain the identity $\Omega^2 U_{i,j} = U_{i-2,j}$. 
	\end{example}
	
	Our next task is to describe the orbits of the bijection given by applying $\Omega$ to the non-projective indecomposable $kN$-modules. 
	
	\begin{lemma}\label{kN-Omega lemma}
		There are $p-1$ orbits of the bijection given by applying $\Omega$ to the non-projective indecomposable $kN$-modules. Each has size $p-1$ and the non-projective indecomposable modules $U_{l,l}$ (with $0\leq l\leq p-2$) form a set of representatives of these orbits.
	\end{lemma}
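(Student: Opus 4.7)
The plan is to work with the explicit formulas $\Omega U_{i,j} = U_{i-2j-2, p-j-2}$ and $\Omega^2 U_{i,j} = U_{i-2, j}$ recorded in Example~\ref{Omega action example}. By Proposition~\ref{mod-kN prop} there are $p(p-1)$ indecomposable $kN$-modules, of which those with $j = p-1$ are precisely the projectives, so the total number of non-projective indecomposable $kN$-modules is $(p-1)^2$. Consequently, it suffices to show that for each $0 \le l \le p-2$ the orbit of $U_{l,l}$ under $\Omega$ has size exactly $p-1$ and that the orbits of $U_{l,l}$ and $U_{l',l'}$ coincide only when $l = l'$; by a dimension count these $p-1$ orbits then exhaust all non-projective indecomposable $kN$-modules.

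For the orbit size of $U_{l,l}$, I would iterate the $\Omega^2$ formula to obtain $\Omega^{2k} U_{l,l} = U_{l - 2k, l}$. Since $p-1$ is even, the least positive $k$ with $2k \equiv 0 \pmod{p-1}$ equals $(p-1)/2$, so $\Omega^{p-1}$ is the first even power of $\Omega$ fixing $U_{l,l}$, and the orbit size divides $p-1$. A strictly smaller orbit could only arise if some odd power of $\Omega$ fixed $U_{l,l}$; but applying the $\Omega$ formula once more shows that $\Omega^{2k+1} U_{l,l}$ has second index $p-l-2$, which differs from $l$ because $p$ is odd. Hence no odd power of $\Omega$ fixes $U_{l,l}$, and the orbit has size exactly $p-1$.

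The distinctness of the orbits of the $U_{l,l}$ follows from the same parity principle. Given $U_{l',l'} = \Omega^m U_{l,l}$ with $0 \le l, l' \le p-2$, the case $m = 2k$ even forces $l = l'$ immediately by comparing second indices. The case $m = 2k+1$ odd forces $l' = p-l-2$ from the second index and then the congruence $2k \equiv -1 \pmod{p-1}$ from the first, which has no solution because $p-1$ is even. Therefore the $p-1$ modules $U_{l,l}$ lie in distinct $\Omega$-orbits, and the argument is complete.

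The main, and really only, obstacle is the careful parity tracking that rules out odd powers of $\Omega$ from shortening orbits or merging them; everything else is mechanical iteration of Example~\ref{Omega action example} and a straightforward count using that $(p-1)\cdot(p-1)$ equals the total number of non-projective indecomposables computed from Proposition~\ref{mod-kN prop}.
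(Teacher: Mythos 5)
Your proof is correct and takes essentially the same approach as the paper: both use the formulas of Example~\ref{Omega action example} to show the second index alternates (forcing even powers of $\Omega$), invoke the $\Omega^2$ formula together with the count of $(p-1)^2$ non-projective indecomposables to pin down orbit sizes, and rule out $U_{l,l}$ and $U_{p-2-l,p-2-l}$ sharing an orbit via a parity argument on the first index modulo $p-1$. The only cosmetic difference is that you phrase the distinctness step as the unsolvable congruence $2k \equiv -1 \pmod{p-1}$, whereas the paper phrases it as invariance of the parity of the first index under $\Omega$; these are the same observation.
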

	
	\begin{proof}
		Recall that by Proposition~\ref{mod-kN prop} if we let $i\in \Z/(p-1)\Z$ and $0\leq j\leq p-2$, we obtain all the non-projective indecomposable $kN$-modules, up to isomorphism, by considering $U_{i,j}$. We now fix $i,j$ in this range.
		
		From Example~\ref{Omega action example} we see that iteratively applying of $\Omega$ to $U_{i,j}$ results in an alternation of the second index between $j$ and $p-2-j$ (these are distinct as $p$ is odd). Therefore $\Omega$ must be applied evenly many times to get back to $U_{i,j}$. Since $\Omega^2 U_{i,j} = U_{i-2,j}$ we can immediately see that we need to apply $\Omega^2$ (at least) $(p-1)/2$ to get back to $U_{i,j}$, hence each orbit has size $p-1$. This also means that there are $p-1$ orbits as the total number of the non-projective indecomposable $kN$-modules, up to isomorphism, is $(p-1)^2$ by Proposition~\ref{mod-kN prop}.
		
		It remains to show that $U_{l,l}$ with $0\leq l\leq p-2$ form a set of representatives. Indeed, if $U_{l,l}$ and $U_{l',l'}$ (with $l\neq l'$) share an orbit, then by using the observed alternation of the second index in the previous paragraph we would have $l'=p-2-l$. But since applying $\Omega$ does not change the well-defined parity of the first index of $U_{i,j}$, we cannot have $U_{l,l}$ and $U_{{p-2-l,p-2-l}}$ in the same orbit. Thus all $U_{l,l}$ lie in different orbits and as there are as many of them as there are orbits they form a set of orbit representatives.     
	\end{proof}
	
	As an immediate corollary we obtain an analogous result for the non-projective indecomposable $kG$-modules and in turn a description of all such $kG$-modules.
	
	\begin{corollary}\label{kG-Omega cor}
		Write $M$ for the set of pairs $(l,m)$ with $0\leq l\leq p-2$ and $m\in \Z/(p-1)\Z$.
		\begin{enumerate}[label=\textnormal{(\roman*)}]
			\item There are $p-1$ orbits of the bijection given by applying $\Omega$ to the non-projective indecomposable $kG$-modules. Each has size $p-1$ and the non-projective irreducible modules $\Sym^l E$ (where $0\leq l\leq p-2$) form a set of representatives of these orbits.
			
			\item There is a one to one correspondence between $M$ and the non-projective indecomposable $kG$-modules, up to isomorphism, given by pairing $(l,m)$ with $\Omega^m (\Sym^l E)$.
			
			\item There is a one to one correspondence between $M$ and the free generators of the abelian group $\overline{R(G)}$ consisting of $\overline{V}$, where $V$ runs over the non-projective indecomposable $kG$-modules, up to isomorphism, given by pairing $(l,m)$ with $\overline{\Omega k}^m \cdot \U_l$.
		\end{enumerate}
	\end{corollary}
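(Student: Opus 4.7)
The plan is to transport the orbit description from $kN$-modules, established in Lemma~\ref{kN-Omega lemma}, to $kG$-modules via the Green correspondence, and then pass to the representation ring using the identity $\Omega V \piso (\Omega k) \otimes V$.

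For part (i), I would combine Proposition~\ref{Green prop} and Proposition~\ref{Heller prop}(iv): restriction gives a bijection between the isomorphism classes of non-projective indecomposable $kG$-modules and those of $kN$-modules, and this bijection commutes with $\Omega$. Hence the $\Omega$-orbit structures on the two sides coincide. By Lemma~\ref{kN-Omega lemma}, the $kN$-side has $p-1$ orbits, each of size $p-1$, with representatives $U_{l,l}$ for $0 \leq l \leq p-2$. Since $\Sym^l E$ corresponds to $U_{l,l}$ under the Green correspondence (Proposition~\ref{Green prop}), these modules form a set of orbit representatives on the $kG$-side, proving (i).

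For part (ii), the map $(l,m) \mapsto \Omega^m(\Sym^l E)$ is well-defined on $M = \{0, \dots, p-2\} \times \Z/(p-1)\Z$ because each orbit has size $p-1$ by (i), so $\Omega^{p-1}$ acts as the identity (up to isomorphism) on each representative $\Sym^l E$. Part (i) then says this map is a bijection onto the set of isomorphism classes of non-projective indecomposable $kG$-modules, since the $p-1$ orbits, each of size $p-1$, are indexed precisely by $(l, m)$.

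For part (iii), I would iterate Proposition~\ref{Heller prop}(v) to obtain $\Omega^m(\Sym^l E) \piso (\Omega k)^{\otimes m} \otimes \Sym^l E$ for each $(l,m) \in M$; passing to $\overline{R(G)}$ gives $\overline{\Omega^m(\Sym^l E)} = \overline{\Omega k}^m \cdot \U_l$. As noted in \S\ref{notation sec}, $\overline{R(G)}$ is freely generated as an abelian group by the classes $\overline{V}$ for $V$ ranging over the non-projective indecomposable $kG$-modules up to isomorphism; combining this with the bijection in (ii) yields (iii).

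The only potentially subtle point is the claim in part (ii) that $\Omega^{p-1}(\Sym^l E) \cong \Sym^l E$ (rather than merely a stable isomorphism), so that the assignment is truly a function on $M$ as a set. This is immediate from Proposition~\ref{Heller prop}(ii), since $\Omega$ is a bijection on isomorphism classes of non-projective indecomposables, and (i) shows the $\Omega$-orbit through $\Sym^l E$ has exact size $p-1$. Beyond this, the argument is a direct assembly of Lemma~\ref{kN-Omega lemma}, Proposition~\ref{Green prop}, and Proposition~\ref{Heller prop}, so no substantial obstacle is expected.
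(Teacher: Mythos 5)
Your proposal is correct and follows essentially the same route as the paper: (i) from Proposition~\ref{Green prop}, Proposition~\ref{Heller prop}(iv) and Lemma~\ref{kN-Omega lemma}; (ii) immediately from (i); (iii) from (ii) together with Proposition~\ref{Heller prop}(v). The paper states this tersely, and your additional remark on well-definedness in (ii) --- that $\Omega^{p-1}(\Sym^l E) \cong \Sym^l E$ as actual modules because $\Omega$ is a bijection on isomorphism classes and the orbit has exact size $p-1$ --- is a correct and useful clarification, not a divergence.
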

	
	\begin{proof}\leavevmode
		\begin{enumerate}[label=\textnormal{(\roman*)}]
			\item This is just a combination of Proposition~\ref{Green prop}, Proposition~\ref{Heller prop}(iv) and Lemma~\ref{kN-Omega lemma}.
			
			\item This is immediate from (i).
			
			\item This follows from (ii) after using Proposition~\ref{Heller prop}(v).
			\qedhere 
		\end{enumerate}
	\end{proof}
	
	\subsection{Partitions and Stanley's Hook Content Formula}\label{SHCF sec}
	
	Let $n$ be a non-negative integer and let $\lambda = (\lambda_1, \lambda_2, \dots, \lambda_t)$ be a partition of $n$. We denote the length of $\lambda$ by $\ell(\lambda)$, the size of $\lambda$ by $|\lambda|$ and write $\lambda \vdash n$ to denote that $\lambda$ is a partition of $n$. We also use the notation $\lambda'$ for the conjugate partition of $\lambda$. Finally, we denote the Schur function labelled by $\lambda$ by $s_{\lambda}$.
	
	For $\lambda$ a partition write $Y(\lambda) =\left\lbrace (i,j)\!: i\leq \ell(\lambda) \text{ and } j\leq \lambda_i \right\rbrace $ for its Young diagram. For each $(i,j)\in Y(\lambda)$ we denote its \textit{hook length}, equal to $\lambda_i + \lambda'_j - i -j +1$, by $h_{i,j}$. Similarly, we denote its \textit{content}, equal to $j-i$, by $c_{i,j}$. We denote the multisets of the hook lengths and contents of $\lambda$ by $\mathcal{H}(\lambda)$ and $\mathcal{C}(\lambda)$, respectively. Finally, for any $s\in \Z$ we denote by $\mathcal{C}_s(\lambda)$ the multiset of the \textit{shifted contents by $s$} of $\lambda$, which is obtained from $\mathcal{C}(\lambda)$ by adding $s$ to all the elements of $\mathcal{C}(\lambda)$. Throughout the paper we simply write $\mathcal{H}$, $\mathcal{C}$ and $\mathcal{C}_s$ for the above multisets as the underlying partition is always implicit from the context.
	
	\begin{example}\label{hook lengths and contents example}
		Let $\lambda = (4,3,1)$ and $s=3$. Figure~\ref{H and C figure} shows that $\mathcal{H}=\left\lbrace 1,1,1,2,3,4,4,6\right\rbrace, \mathcal{C}=\left\lbrace -2,-1,0,0,1,1,2,3 \right\rbrace$ and $\mathcal{C}_s=\left\lbrace 1,2,3,3,4,4,5,6 \right\rbrace$.
	\end{example}
	
	\begin{figure}[h]
		\Yboxdim19pt
		$\young(6431,421,1) \qquad  \young(0123,\mone 01,\mtwo) \qquad \young(3456,234,1)$
		\caption{Three Young diagrams of the partition $\lambda = (4,3,1)$ displaying its hook lengths, contents and shifted contents by $3$ from left to right.}
		\label{H and C figure}
	\end{figure}
	
	Let us now make a few simple observations about shifted contents.
	
	\begin{lemma}\label{shifted content lemma}
		Let $\lambda$ be a non-empty partition and $s$ an integer.
		\begin{enumerate}[label=\textnormal{(\roman*)}]
			\item The maximal element of $\mathcal{C}_s$ is $\lambda_1 + s -1$ and its multiplicity in $\mathcal{C}_s$ is one.
			\item The minimal element of $\mathcal{C}_s$ is $s +1 - \ell(\lambda)$ and its multiplicity in $\mathcal{C}_s$ is one.
			\item An integer $m$ lies in $\mathcal{C}_s$ if and only if $s +1 - \ell(\lambda)\leq m \leq \lambda_1 + s -1$.
		\end{enumerate}
	\end{lemma}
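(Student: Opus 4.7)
The plan is to reduce everything to the case $s=0$ since translating by $s$ is a bijection of multisets that preserves order and multiplicities; all three statements for general $s$ then follow from the corresponding statements for the ordinary contents $\mathcal{C}$. Throughout I use the description $\mathcal{C} = \{j-i : (i,j)\in Y\}$ of the content multiset.

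For (i), I would fix a row index $i$ and observe that among cells in row $i$ the content $j-i$ is maximised at $j=\lambda_i$, giving value $\lambda_i - i$. The sequence $i\mapsto \lambda_i - i$ is strictly decreasing on $\{1,\dots,\ell(\lambda)\}$: indeed $\lambda_i\geq \lambda_{i+1}$ yields $\lambda_i - i\geq \lambda_{i+1}-i>\lambda_{i+1}-(i+1)$. Hence the maximum of the contents is attained uniquely at the cell $(1,\lambda_1)$ and equals $\lambda_1 - 1$, proving (i) after shifting. Statement (ii) follows by the same argument applied to columns (or, equivalently, by applying (i) to the conjugate partition $\lambda'$ and using $(\lambda')_1 = \ell(\lambda)$): the minimum of $j-i$ is attained uniquely at $(\ell(\lambda),1)$ with value $1-\ell(\lambda)$.

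For (iii), one direction is immediate from (i) and (ii). For the converse, given an integer $m$ with $1-\ell(\lambda)\leq m\leq \lambda_1 - 1$ I would exhibit an explicit cell of $Y$ with content $m$. If $m\geq 0$, the cell $(1,m+1)$ works since $m+1\leq \lambda_1$. If $m<0$, the cell $(1-m,1)$ works: the row index satisfies $1\leq 1-m\leq \ell(\lambda)$, and then $\lambda_{1-m}\geq 1$ ensures the cell lies in $Y$, with content $1-(1-m)=m$. Shifting by $s$ gives the claim.

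There is no real obstacle here; the lemma is bookkeeping about the piecewise behaviour of the map $(i,j)\mapsto j-i$ on a Young diagram. The only point that requires care is distinguishing weak from strict inequalities in order to obtain the uniqueness assertions in (i) and (ii), which is exactly where the strict monotonicity of $\lambda_i - i$ (and dually $j-\lambda'_j$) is used.
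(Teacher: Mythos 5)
Your proof is correct and follows essentially the same route as the paper: the maximum of the (shifted) content is located at the rightmost box of the first row, the minimum at the bottom box of the first column, and the intermediate values are witnessed by boxes in the first row or first column. You simply spell out the monotonicity argument and the explicit cell choices that the paper leaves implicit.
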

	
	\begin{proof}
		From the definition of $\mathcal{C}_s$, the maximal element of $\mathcal{C}_s$ corresponds to the rightmost box in the first row. Similarly, the minimal element corresponds to the bottom box in the first column. Hence we get (i) and (ii). The `only if' direction in (iii) comes from (i) and (ii), while the `if' direction comes from considering the shifted contents of the boxes in the first row and the first column.
	\end{proof}
	
	The following slightly modified Stanley's Hook Content Formula appears in \cite[Proposition~2.5.7]{BensonBundles17} (while the standard version can be found in \cite[Theorem~7.21.2]{StanleyEnumerativeII99}). It is an essential tool for establishing the main results.
	
	\begin{theorem}[Stanley's Hook Content Formula]\label{SHCF thm}
		Let $\lambda$ be a partition, $l$ a non-negative integer and $q$ a variable. Then
		\[
		s_{\lambda}(q^{-l}, q^{-l+2}, \dots, q^l) = \frac{\prod_{c\in \mathcal{C}_{l+1}} (q^c - q^{-c})}{\prod_{h\in \mathcal{H}} (q^h - q^{-h})}.
		\]
	\end{theorem}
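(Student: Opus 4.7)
My plan is to derive the stated identity from the standard form of Stanley's Hook Content Formula (as given in Stanley's Enumerative Combinatorics Vol.~II, Theorem~7.21.2), which reads
\[
s_\lambda(1, q, q^2, \dots, q^{n-1}) \;=\; q^{n(\lambda)} \prod_{(i,j)\in Y}\frac{1-q^{n+c_{i,j}}}{1-q^{h_{i,j}}},
\]
where $n(\lambda) = \sum_{i} (i-1)\lambda_i$. The proof is then essentially a bookkeeping manipulation of powers of $q$, so I will take the standard form as a black box and explain how to transform it into the symmetric presentation used here.

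First, I would use the homogeneity identity $s_\lambda(cx_1, cx_2, \dots) = c^{|\lambda|} s_\lambda(x_1, x_2, \dots)$ with $c = q^{-l}$, giving
\[
s_\lambda(q^{-l}, q^{-l+2}, \dots, q^l) \;=\; q^{-l|\lambda|}\, s_\lambda(1, q^2, q^4, \dots, q^{2l}).
\]
Then I would apply the standard Hook Content Formula with $q$ replaced by $q^2$ and with $n = l+1$ variables, yielding
\[
s_\lambda(1, q^2, \dots, q^{2l}) \;=\; q^{2n(\lambda)} \prod_{(i,j)\in Y} \frac{1 - q^{2(l+1+c_{i,j})}}{1 - q^{2h_{i,j}}}.
\]

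Next, I would rewrite each factor into the desired symmetric shape using $1 - q^{2a} = -q^a(q^a - q^{-a})$. Applied to both numerator and denominator, this gives
\[
\frac{1-q^{2(l+1+c_{i,j})}}{1-q^{2h_{i,j}}} \;=\; q^{(l+1+c_{i,j}) - h_{i,j}} \cdot \frac{q^{l+1+c_{i,j}} - q^{-(l+1+c_{i,j})}}{q^{h_{i,j}} - q^{-h_{i,j}}}.
\]
Since the exponents $l+1+c_{i,j}$ as $(i,j)$ ranges over $Y$ are precisely the elements of $\mathcal{C}_{l+1}$, the collective product of the symmetric factors is the desired ratio from the theorem. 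What remains is to verify that the total accumulated power of $q$ is zero, i.e. that
\[
-l|\lambda| + 2n(\lambda) + (l+1)|\lambda| + \textstyle\sum_{(i,j)} c_{i,j} - \sum_{(i,j)} h_{i,j} \;=\; 0.
\]

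The main (but purely combinatorial) obstacle is this last step, which reduces to two well-known sum identities: $\sum_{(i,j)} c_{i,j} = n(\lambda') - n(\lambda)$ and $\sum_{(i,j)} h_{i,j} = n(\lambda) + n(\lambda') + |\lambda|$. Both follow by a straightforward double-counting argument (expanding $\lambda_i^2$ and $(\lambda'_j)^2$ in terms of $n(\lambda)$ and $n(\lambda')$). Substituting these yields $|\lambda| + 2n(\lambda) + (n(\lambda') - n(\lambda)) - (n(\lambda) + n(\lambda') + |\lambda|) = 0$, completing the proof. I expect the cleanest presentation is to state the two sum identities as a short preliminary lemma so that the bookkeeping in the main computation stays transparent.
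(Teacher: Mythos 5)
The paper does not actually prove this theorem: it cites \cite[Proposition~2.5.7]{BensonBundles17} as the source of this symmetric form, noting \cite[Theorem~7.21.2]{StanleyEnumerativeII99} for the standard version. So your derivation from the standard form is a genuine supplement rather than a restatement of the paper's argument.

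Your derivation is correct. The homogeneity step, the substitution $q\mapsto q^2$ with $n=l+1$, and the rewriting $1-q^{2a}=-q^a(q^a-q^{-a})$ are all sound, and the two combinatorial identities you invoke are standard: using $n(\lambda)=\sum_j\binom{\lambda'_j}{2}$, $n(\lambda')=\sum_i\binom{\lambda_i}{2}$, $\sum_{(i,j)}i=n(\lambda)+|\lambda|$, and $\sum_{(i,j)}j=n(\lambda')+|\lambda|$, one gets $\sum c_{i,j}=n(\lambda')-n(\lambda)$ and, expanding $\sum\lambda_i^2=2n(\lambda')+|\lambda|$ and $\sum(\lambda'_j)^2=2n(\lambda)+|\lambda|$, $\sum h_{i,j}=n(\lambda)+n(\lambda')+|\lambda|$. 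Substituting then cancels the net power of $q$ to zero exactly as you claim. One small remark worth making explicit: when $\ell(\lambda)>l+1$ both sides vanish (the left side because $s_\lambda$ in $l+1$ variables is zero, the right side because Lemma~\ref{shifted content lemma}(iii) forces $0\in\mathcal{C}_{l+1}$), so the identity holds unconditionally and the standard form likewise degenerates correctly. Your plan to state the two sum identities as a preliminary lemma is sensible; the only change I would suggest is to add the degenerate-case remark so the reader does not worry about whether the substitution into the standard form is legitimate when $n<\ell(\lambda)$.
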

	
	\subsection{Identities of Schur functors}
	
	Throughout this subsection let $H$ be a finite group. We denote the Schur functor labelled by $\lambda$ by $\nabla^{\lambda}$, the symmetric group on $n$ elements by $S_n$ and the Specht module labelled by $\lambda$ by $S^{\lambda}$.
	
	An initial result regarding Schur functors we need is Schur--Weyl duality. This name refers to two different statements which are usually presented in characteristic zero. The version stated below can be recovered from \cite[Lemma~2.4]{BryantLiePowers09} using the same strategy as in characteristic zero and the fact that the group algebra $kS_n$ is semisimple if $n<p$.
	
	\begin{theorem}[Schur--Weyl duality]\label{SW duality thm}
		Suppose that $n<p$ and $V$ is a $kH$-module. Then there is an isomorphism of $kS_n$-$kH$-bimodules
		\[
		V^{\otimes n} \cong \bigoplus_{\lambda \vdash n} S^{\lambda}\otimes \nabla^{\lambda} V. 
		\]
	\end{theorem}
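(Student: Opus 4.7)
The plan is to run the standard characteristic-zero argument, using that the hypothesis $n<p$ forces $p\nmid n!=|S_n|$, so by Maschke's theorem $kS_n$ is semisimple. In particular every $kS_n$-module is a direct sum of Specht modules $S^\lambda$ indexed by $\lambda\vdash n$, each of which is absolutely irreducible (the same character-theoretic argument as in characteristic $0$ applies because $kS_n$ is split semisimple: the ordinary irreducible representations of $S_n$ remain irreducible over any field of characteristic not dividing $n!$).

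First I would set up the $k[H\times S_n]$-bimodule structure on $V^{\otimes n}$: let $H$ act diagonally and let $S_n$ act by place permutations. These actions commute, so $V^{\otimes n}$ is a $k[H\times S_n]$-bimodule. Next, viewing $V^{\otimes n}$ solely as a right $kS_n$-module, semisimplicity yields the isotypic decomposition
\begin{equation*}
V^{\otimes n}\cong\bigoplus_{\lambda\vdash n} S^\lambda\otimes_k \Hom_{kS_n}\bigl(S^\lambda, V^{\otimes n}\bigr),
\end{equation*}
where the summand is of course a $k[H\times S_n]$-bimodule with $S_n$ acting on the first tensor factor and $H$ acting on the second (via its commuting action on $V^{\otimes n}$).

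The remaining step is to identify the multiplicity space with the Schur functor, namely to exhibit a natural $kH$-module isomorphism
\begin{equation*}
\Hom_{kS_n}(S^\lambda, V^{\otimes n}) \cong \nabla^\lambda V.
\end{equation*}
For this I would invoke the definition of the Schur functor used in \cite[Definition 1.16.1]{BensonBundles17} or \cite[Definition 2.3]{deBoeckPagetWildonPlethysms21}: in characteristic $0$ (and, by the semisimplicity of $kS_n$ in our range $n<p$, equally here) $\nabla^\lambda V$ is exactly the $\lambda$-isotypic multiplicity space of the $S_n$-action on $V^{\otimes n}$, which can be realised either as $\Hom_{kS_n}(S^\lambda,V^{\otimes n})$ or, equivalently via the self-duality of Specht modules for semisimple $kS_n$, as $V^{\otimes n}\otimes_{kS_n}S^\lambda$. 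This is precisely the content of \cite[Lemma 2.4]{BryantLiePowers09}, whose proof carries over verbatim once $kS_n$ is known to be semisimple.

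The main obstacle is purely notational: different sources define $\nabla^\lambda V$ either as a Hom-space, a tensor product with a Specht module, or the image of a Young symmetriser, and one must check these agree in our setting. All three presentations coincide whenever $kS_n$ is semisimple, because then the Young symmetriser is (up to a unit in $k$) a primitive idempotent generating the $S^\lambda$-isotypic component. Since the hypothesis $n<p$ delivers exactly this semisimplicity, no further work is required.
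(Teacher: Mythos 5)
Your proposal is correct and follows essentially the same route as the paper, which simply cites \cite[Lemma~2.4]{BryantLiePowers09} together with the semisimplicity of $kS_n$ for $n<p$ and the standard characteristic-zero argument; you have merely spelled out the isotypic decomposition and the identification of the multiplicity space with the Schur functor.
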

	
	Note that the left action of the symmetric group $S_n$ on $V^{\otimes n}$ is given by place permutation. That is, for $\sigma \in S_n$ and $\List{x}{n}\in V$ we define $\sigma \left( x_1\otimes x_2\otimes \dots \otimes x_n\right)  = x_{1\sigma}\otimes x_{2 \sigma}\otimes \dots \otimes x_{n \sigma}$ and extend this linearly to $V^{\otimes n}$.
	
	Theorem~\ref{SW duality thm} tell us that if $V$ is a $kH$-module and $\nu$ is a partition of $n$ with $n<p$, then $\nabla^{\nu} V$ is a summand of $V^{\otimes n}$. This together with rings $R_I$ and $R_E$ introduced in \S\ref{notation sec} can be used to establish our initial results about modular plethysms of the natural $kG$-module $E$. Recall that we say that a partition $\nu$ is $p$-small if $|\nu|<p$.
	
	\begin{lemma}\label{initial modular plethysms lemma}
		Suppose that $\nu$ is a $p$-small partition and $0\leq l\leq p-2$.
		\begin{enumerate}[label=\textnormal{(\roman*)}]
			\item All the non-projective indecomposable summands of $\nabla^{\nu} \Sym^l E$ are irreducible.
			\item If $\Sym^a E$ and $\Sym^b E$ (with $0\leq a,b,\leq p-2$) are both summands of $\nabla^{\nu} \Sym^l E$, then $a$ and $b$ have the same parity.
		\end{enumerate}
	\end{lemma}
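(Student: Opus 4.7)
Write $n = |\nu|$, so $n < p$ since $\nu$ is $p$-small. The overall strategy is to exhibit $\nabla^{\nu} \Sym^l E$ as a direct summand of $(\Sym^l E)^{\otimes n}$ via Schur--Weyl duality (Theorem~\ref{SW duality thm}); by the Krull--Schmidt theorem every indecomposable summand of $\nabla^{\nu} \Sym^l E$ then occurs as an indecomposable summand of $(\Sym^l E)^{\otimes n}$, and it suffices to prove both claims for the latter.

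For part (i), passing to $\overline{R(G)}$ gives $\overline{(\Sym^l E)^{\otimes n}} = \U_l^n$. Since $\U_l \in R_I$ and $R_I$ is a subring of $\overline{R(G)}$ by Corollary~\ref{CG rule cor}(iii), this element lies in $R_I$. Recall that $R_I$ is freely generated as an abelian group by the classes $\U_0, \U_1, \ldots, \U_{p-2}$ of the non-projective irreducibles, while $\overline{R(G)}$ is freely generated by the classes of all non-projective indecomposables (Corollary~\ref{kG-Omega cor}(iii)). Expanding $\U_l^n$ in the latter basis, only irreducible generators can appear, so every non-projective indecomposable summand of $(\Sym^l E)^{\otimes n}$ is irreducible.

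For part (ii), the key observation is that Corollary~\ref{CG rule cor}(i) shows the product $\U_i \cdot \U_j$ is a sum of $\U_k$ with $k \equiv i + j \pmod{2}$. Hence the decomposition $R_I = R_E \oplus R_{\mathrm{odd}}$, where $R_{\mathrm{odd}}$ denotes the subgroup generated by $\U_1, \U_3, \ldots, \U_{p-2}$, is in fact a $\mathbb{Z}/2\mathbb{Z}$-grading of the ring $R_I$. Since $\U_l$ is homogeneous of degree $l \bmod 2$, the element $\U_l^n$ is homogeneous of degree $nl \bmod 2$, so every $\U_k$ appearing in its expansion satisfies $k \equiv nl \pmod{2}$, which combined with part (i) yields (ii). I do not expect a real obstacle here: once Schur--Weyl duality reduces matters to the tensor power, both statements follow immediately from the ring-theoretic properties of $R_I$ already extracted from the Clebsch--Gordan rule.
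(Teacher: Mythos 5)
Your proof is correct and follows essentially the same route as the paper: Schur--Weyl duality reduces both claims to $(\Sym^l E)^{\otimes n}$, and the Clebsch--Gordan rule via the ring $R_I$ handles the rest. The only cosmetic difference is in part (ii), where you package the parity observation as a $\mathbb{Z}/2\mathbb{Z}$-grading $R_I = R_E \oplus R_{\mathrm{odd}}$ read off from Corollary~\ref{CG rule cor}(i), whereas the paper factors $\U_l = \U_{p-2}^{\varepsilon}\cdot\U_{2i}$ and invokes Corollary~\ref{CG rule cor}(ii) and (v); both are immediate consequences of the same Clebsch--Gordan computation.
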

	
	\begin{proof}
		By Schur--Weyl duality $\nabla^{\nu} \Sym^l E$ is a summand of $\left( \Sym^l E\right)^{\otimes |\nu|}$. Now, we know that all the non-projective indecomposable summands of this $|\nu|$-fold tensor product are irreducible from the Clebsch--Gordan rule (in particular, using that $R_I$ is closed under multiplication from Corollary~\ref{CG rule cor}(iii)). And thus the same is true for $\nabla^{\nu} \Sym^l E$, which establishes (i).
		
		To prove (ii), recall that $\U_l = \overline{\Sym^l E}$ and that we can write $\U_l$ as $\U_{p-2}^{\varepsilon}\cdot \U_{2i}$ for some $0\leq i\leq (p-3)/2$ and $\varepsilon\in \left\lbrace 0,1\right\rbrace $, following from Corollary~\ref{CG rule cor}(ii). In the ring $\overline{R(G)}$ the $|\nu|$-fold tensor product $\left( \Sym^l E\right) ^{\otimes |\nu|}$ becomes $\U_{p-2}^{\varepsilon |\nu|}\cdot \U_{2i}^{|\nu|}$. The latter element in the product lies in $R_E$ (by Corollary~\ref{CG rule cor}(v)), thus it is a sum of certain $\U_j$ for even $j$. Hence all summands of $\U_{p-2}^{\varepsilon |\nu|}\cdot \U_{2i}^{|\nu|}$ are of the form $\U_j$ with $j$ even (if $\varepsilon |\nu|$ is even) or with $j$ odd (if $\varepsilon |\nu|$ is odd). Hence if $\Sym^a E$ and $\Sym^b E$ are both summands of $\left( \Sym^l E\right) ^{\otimes |\nu|}$, then the parities of $a$ and $b$ agree. The same must then hold for $\nabla^{\nu} \Sym^l E$, establishing (ii).
	\end{proof}
	
	We now briefly recall two famous sets of coefficients.
	
	\begin{definition}\label{L-R defn}
		Suppose that $K$ is a field of characteristic zero and $a$ and $b$ are two non-negative integers. Define $n=a+b$ and let $\lambda, \mu$ and $\nu$ be partitions of $a, b$ and $n$, respectively. The \textit{Littlewood--Richardson coefficient} $c_{\lambda\mu}^{\nu}$ equals the multiplicity of $S^{\nu}$ as a summand of $\left( S^{\lambda} \boxtimes S^{\mu} \right)\Ind_{S_a\times S_b}^{S_n}$.
	\end{definition}
	
	\begin{definition}\label{Kronecker defn}
		Suppose that $K$ is a field of characteristic zero, $n$ is a non-negative integer and $\lambda, \mu, \nu \vdash n$. The \textit{Kronecker coefficient} $g_{\lambda \mu}^{\nu}$ equals the multiplicity of the Specht module $S^{\nu}$ as a summand of $S^{\lambda}\otimes S^{\mu}$.
	\end{definition}
	
	While computing the Kronecker coefficients is a difficult task in general, we only need their values in particular cases.
	
	\begin{lemma}\label{Kronecker coefficients lemma}
		Let $n$ be a non-negative integer and $\mu, \nu\vdash n$.
		\begin{enumerate}[label=\textnormal{(\roman*)}]
			\item $g_{(n) \mu}^{\nu} = 0$ unless $\mu = \nu$, in which case the value of the Kronecker coefficient is $1$.
			\item $g_{(1^n) \mu}^{\nu} = 0$ unless $\mu = \nu'$, in which case the value of the Kronecker coefficient is $1$.
		\end{enumerate}
	\end{lemma}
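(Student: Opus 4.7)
The plan is to reduce both identities to well-known facts about the trivial and sign representations of the symmetric group, exploiting the fact that we are working over a field $K$ of characteristic zero, in which the Specht modules $S^{\lambda}$ form a complete list of pairwise non-isomorphic irreducible $KS_n$-modules.

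For part (i), I would observe that $S^{(n)}$ is (by construction) the trivial one-dimensional $KS_n$-module. Hence $S^{(n)} \otimes S^{\mu} \cong S^{\mu}$ as $KS_n$-modules. Comparing with the defining decomposition
\[
S^{(n)} \otimes S^{\mu} \cong \bigoplus_{\nu \vdash n} g_{(n)\mu}^{\nu}\, S^{\nu},
\]
the uniqueness of the decomposition of a semisimple module into irreducibles forces $g_{(n)\mu}^{\nu}=1$ when $\nu=\mu$ and $g_{(n)\mu}^{\nu}=0$ otherwise.

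For part (ii), I would use the standard fact that $S^{(1^n)}$ is the one-dimensional sign representation $\sgn$ of $S_n$, together with the classical identity $S^{\mu}\otimes \sgn \cong S^{\mu'}$, which follows from the description of Specht modules via Young symmetrisers (tensoring by the sign swaps the roles of row and column symmetrisers, i.e.\ conjugates the partition). Consequently $S^{(1^n)}\otimes S^{\mu} \cong S^{\mu'}$ and the same uniqueness argument as in (i) yields the claim.

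There is no real obstacle here: both statements are immediate consequences of the identification of the one-row and one-column Specht modules with the trivial and sign representations, and of the irreducibility and pairwise non-isomorphism of Specht modules in characteristic zero. If a self-contained justification of $S^{\mu}\otimes \sgn \cong S^{\mu'}$ is desired, one can either cite it from a standard reference (e.g.\ James--Kerber) or sketch it using Young symmetrisers.
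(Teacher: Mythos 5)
Your argument is correct, and it is the standard one: identify $S^{(n)}$ with the trivial module and $S^{(1^n)}$ with the sign module, use $S^{\mu}\otimes\sgn\cong S^{\mu'}$, and invoke uniqueness of decomposition of semisimple modules. The paper itself gives no proof of this lemma, treating it as a well-known fact about Kronecker coefficients, so there is nothing to compare against; your write-up supplies exactly the justification one would expect, and the only thing you might add for completeness is a reference for $S^{\mu}\otimes\sgn\cong S^{\mu'}$ (e.g.\ James--Kerber or Fulton--Harris), which you already flag.
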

	
	We have introduced the Littlewood-Richardson and the Kronecker coefficients as they appear in the following expansions.
	
	\begin{lemma}\label{Schur functor expansion lemma}
		Suppose that $n<p$ and $V,W$ are two $kH$-modules. Then for any $\nu \vdash n$ we have isomorphisms
		\[
		\nabla^{\nu} \left( V\oplus W\right)  \cong \bigoplus_{\lambda,\mu :\; |\lambda| + |\mu|=n} c_{\lambda\mu}^{\nu}  \left( \nabla^{\lambda} V \otimes \nabla^{\mu} W\right)
		\]
		and
		\[
		\nabla^{\nu} \left( V\otimes W\right)  \cong \bigoplus_{\lambda,\mu\vdash n} g_{\lambda\mu}^{\nu}  \left( \nabla^{\lambda} V \otimes \nabla^{\mu} W\right).    
		\]
	\end{lemma}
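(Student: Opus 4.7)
The strategy will be to obtain both identities by expanding $(V\oplus W)^{\otimes n}$ and $(V\otimes W)^{\otimes n}$ as $k[H\times S_n]$-bimodules in two different ways and then extracting $S^{\nu}$-isotypic components. The fact that $n<p$ (so that $kS_n$ is semisimple and the Specht modules $S^{\nu}$ are pairwise non-isomorphic simple $kS_n$-modules) is what allows both Theorem~\ref{SW duality thm} and the isotypic decomposition to be used; this point is the main (and essentially only) subtlety distinguishing the characteristic $p$ case from the classical one.

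For the direct sum formula, the plan is to group the $2^n$ summands of $(V\oplus W)^{\otimes n}$ according to the subset of positions occupied by $V$-factors. The $S_n$-orbit of a summand with exactly $a$ copies of $V$ and $b=n-a$ copies of $W$ is $k[H\times S_n]$-isomorphic to $\bigl(V^{\otimes a}\otimes W^{\otimes b}\bigr)\ind_{S_a\times S_b}^{S_n}$, so
\[
(V\oplus W)^{\otimes n}\;\cong\;\bigoplus_{a+b=n}\bigl(V^{\otimes a}\otimes W^{\otimes b}\bigr)\ind_{S_a\times S_b}^{S_n}.
\]
Applying Theorem~\ref{SW duality thm} to $V^{\otimes a}$ and $W^{\otimes b}$ separately and inducing, then invoking Definition~\ref{L-R defn} to decompose $(S^{\lambda}\boxtimes S^{\mu})\ind_{S_a\times S_b}^{S_n}$ as $\bigoplus_{\nu\vdash n} c_{\lambda\mu}^{\nu} S^{\nu}$, turns the right-hand side into
\[
\bigoplus_{\nu\vdash n} S^{\nu}\otimes\Biggl(\bigoplus_{\lambda,\mu:\, |\lambda|+|\mu|=n} c_{\lambda\mu}^{\nu}\bigl(\nabla^{\lambda} V\otimes\nabla^{\mu} W\bigr)\Biggr).
\]
Comparing this with $(V\oplus W)^{\otimes n}\cong\bigoplus_{\nu\vdash n}S^{\nu}\otimes\nabla^{\nu}(V\oplus W)$ from Theorem~\ref{SW duality thm} and taking $\Hom_{kS_n}(S^{\nu},-)$ on each side yields the first identity.

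For the tensor product formula, the plan is to observe that $(V\otimes W)^{\otimes n}\cong V^{\otimes n}\otimes W^{\otimes n}$ as $k[H\times S_n]$-modules, where $S_n$ acts diagonally by simultaneous place permutation. Applying Theorem~\ref{SW duality thm} to the left-hand side gives $\bigoplus_{\nu\vdash n} S^{\nu}\otimes \nabla^{\nu}(V\otimes W)$, while applying it to each factor on the right gives
\[
\bigoplus_{\lambda,\mu\vdash n}\bigl(S^{\lambda}\otimes S^{\mu}\bigr)\otimes\bigl(\nabla^{\lambda}V\otimes\nabla^{\mu}W\bigr),
\]
with $S_n$ acting diagonally on $S^{\lambda}\otimes S^{\mu}$. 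By Definition~\ref{Kronecker defn}, $S^{\lambda}\otimes S^{\mu}\cong\bigoplus_{\nu\vdash n} g_{\lambda\mu}^{\nu} S^{\nu}$ as $kS_n$-modules. Equating $S^{\nu}$-isotypic components again delivers the second identity.

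The only non-routine point is the legitimacy of extracting $S^{\nu}$-isotypic components: this relies on $kS_n$ being semisimple with the Specht modules as a complete set of simple modules, which holds precisely because $n<p$. Once this is in hand, the two decompositions are formal consequences of Theorem~\ref{SW duality thm} together with Definitions~\ref{L-R defn} and~\ref{Kronecker defn}, so I do not anticipate any further obstacles.
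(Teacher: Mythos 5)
Your proof is correct, but it takes a genuinely different route from the paper's. The paper dispatches the lemma in a single line: it invokes the semisimplicity result cited as \cite[(2.6e)]{GreenPolynomial80}, passes to formal characters, and reads both identities off the corresponding Schur-function identities in Macdonald. You instead work entirely with $k[H\times S_n]$-bimodules: you expand $(V\oplus W)^{\otimes n}$ and $(V\otimes W)^{\otimes n}$ in two ways via Schur--Weyl duality (Theorem~\ref{SW duality thm}) and compare $S^{\nu}$-isotypic components. Both approaches hinge on $n<p$, but deploy it through different semisimplicity facts: the paper via semisimplicity of degree-$n$ polynomial representations, you via semisimplicity of $kS_n$. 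Your argument is more self-contained --- it uses only ingredients already set up earlier in the paper --- while the paper's one-liner outsources more to the literature. One point you should make explicit in a write-up: Definitions~\ref{L-R defn} and~\ref{Kronecker defn} define $c_{\lambda\mu}^{\nu}$ and $g_{\lambda\mu}^{\nu}$ over a field of characteristic zero, so when you decompose $(S^{\lambda}\boxtimes S^{\mu})\Ind_{S_a\times S_b}^{S_n}$ and $S^{\lambda}\otimes S^{\mu}$ over $k$ using those same coefficients, you are implicitly using that the multiplicities agree with the characteristic-zero ones when $n<p$. This is true (every element of $S_n$ is $p$-regular when $n<p$, so Brauer characters agree with ordinary characters and the decomposition matrix is the identity), but it deserves a sentence, as it is exactly where $n<p$ does work for you beyond merely making $kS_n$ semisimple.
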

	
	\begin{proof}
		Using the semisimplicity result \cite[(2.6e)]{GreenPolynomial80}, this follows after taking formal characters and using \cite[(5.9) and (7.9)]{MacdonaldPolynomials95}.
	\end{proof}
	
	A crucial consequence of the first isomorphism is that for any $p$-small partition $\nu$ the Schur functor $\nabla^{\nu}$ is a well-defined functor in the stable module category. Indeed, we have the following result.
	
	\begin{corollary}\label{well defined functors cor}
		Suppose that $\nu$ is a $p$-small partition and $V,P$ are two $kH$-modules with $P$ being projective. Then $\nabla^{\nu}\left(V\oplus P \right) \piso \nabla^{\nu} V$.
	\end{corollary}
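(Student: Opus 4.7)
The plan is to apply the first isomorphism of Lemma~\ref{Schur functor expansion lemma} to the direct sum $V\oplus P$, then isolate the summand corresponding to $(\lambda,\mu)=(\nu,\varnothing)$ and argue that every other summand is projective.

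More precisely, setting $n=|\nu|<p$, Lemma~\ref{Schur functor expansion lemma} gives
\[
\nabla^{\nu}(V\oplus P) \cong \bigoplus_{\lambda,\mu:\; |\lambda|+|\mu|=n} c_{\lambda\mu}^{\nu}\bigl(\nabla^{\lambda} V \otimes \nabla^{\mu} P\bigr).
\]
The term with $\mu=\varnothing$ forces $\lambda=\nu$ and contributes exactly $\nabla^{\nu} V$ (since $c_{\nu\varnothing}^{\nu}=1$). I would next split off this term and rewrite the remaining sum as being indexed over pairs with $|\mu|\geq 1$.

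The core of the proof is then to show that each remaining summand is projective. For any $\mu$ with $1\leq |\mu|\leq n<p$, Schur--Weyl duality (Theorem~\ref{SW duality thm}) realises $\nabla^{\mu} P$ as a direct summand of $P^{\otimes |\mu|}$. Since $P$ is projective and tensor products over $k$ with a projective $kH$-module are again projective, $P^{\otimes |\mu|}$ is projective, and hence so is its summand $\nabla^{\mu} P$. Tensoring with $\nabla^{\lambda} V$ preserves projectivity for the same reason, so every summand with $|\mu|\geq 1$ is projective. Collecting these projective summands into a single projective $kH$-module $Q$ yields $\nabla^{\nu}(V\oplus P) \cong \nabla^{\nu} V \oplus Q$, which gives $\nabla^{\nu}(V\oplus P)\piso \nabla^{\nu} V$.

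I expect no serious obstacle: the only subtle ingredients are the $p$-smallness hypothesis (needed to invoke Schur--Weyl duality and Lemma~\ref{Schur functor expansion lemma}) and the standard fact that projectivity of $kH$-modules is preserved under direct summands and tensoring with arbitrary modules.
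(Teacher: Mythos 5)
Your proof is correct and follows the same route as the paper: expand $\nabla^{\nu}(V\oplus P)$ via Lemma~\ref{Schur functor expansion lemma}, use Schur--Weyl duality to see that $\nabla^{\mu}P$ is projective for $\mu\neq\varnothing$ (hence so is $\nabla^{\lambda}V\otimes\nabla^{\mu}P$), and identify the surviving term via $c_{\lambda\varnothing}^{\nu}=\delta_{\lambda\nu}$. No gaps.
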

	
	\begin{proof}
		Let $n=|\nu| < p$. By Schur--Weyl duality if $\mu$ is a non-empty $p$-small partition, the module $\nabla^{\mu} P$ is a summand of $P^{\otimes |\mu|}$ and hence it is projective. Therefore the summands on the right-hand side of $\nabla^{\nu} \left( V\oplus P\right)  \cong \bigoplus_{\lambda,\mu :\; |\lambda| + |\mu|=n} c_{\lambda\mu}^{\nu}  \left( \nabla^{\lambda} V \otimes \nabla^{\mu} P\right)$ (given by Lemma~\ref{Schur functor expansion lemma}) are all projective apart from the ones from $\mu=\o$. Thus 
		\[\nabla^{\nu}\left(V\oplus P \right) \piso \bigoplus_{\lambda\vdash n} c_{\lambda\o}^{\nu}\nabla^{\lambda} V.\]
		From Definition~\ref{L-R defn} clearly $c_{\lambda\o}^{\nu} = 1$ if $\lambda=\nu$ and $c_{\lambda\o}^{\nu} = 0$ otherwise, establishing the result.
	\end{proof}
	
	\subsection{$p$-$\lambda$-rings}\label{p subsection}
	
	Here we only provide a brief summary about $\lambda$-rings and $p$-$\lambda$-rings needed for this paper. For more detailed background see, for instance, \cite{KnutsonLambda73} or \cite[\S2.6]{BensonBundles17} (note that in the latter the term special $\lambda$-rings is used for $\lambda$-rings and similarly, the term special $p$-$\lambda$-rings is used for $p$-$\lambda$-rings).
	
	Recall that \textit{$\lambda$-ring} is a ring equipped with operations $\lambda^i$ for all $i\in \Z_{\geq 0}$ satisfying several axioms (see \cite[pp.~7,~13]{KnutsonLambda73}). A ring is a \textit{$p$-$\lambda$-ring} if the operations $\lambda^i$ are defined for all $i<p$ and they satisfy the same set of axioms as in the case of $\lambda$-rings but only restricted to powers of $\lambda$ which are less than $p$ and in the case of the axiom regarding the composition $\lambda^i(\lambda^j(x))$ the condition $ij<p$ is required.
	
	\begin{example}\label{p-lambda first example}
		An example of a $\lambda$-ring is the ring of symmetric functions over $\Z$. If $f$ is a symmetric function over $\Z$ with all coefficients non-negative, then $\lambda^i$ applied to $f$ is the elementary symmetric polynomial $e_i$ evaluated at the monomials of $f$.
		
		For example, if $f=e_2(x_1,x_2,\dots)=x_1x_2 + x_1x_3 + x_2x_3 + \dots$, then $\lambda^3 f = e_3(x_1x_2,x_1x_3,x_2x_3,\dots)$.
	\end{example}
	
	For the next example let $\z$ denote a $p$th root of unity.
	
	\begin{example}\label{p-lambda second example}
		By specialising Example~\ref{p-lambda first example} to two variables, we get a $\lambda$-ring consisting of the symmetric functions of $\Z[x_1,x_2]$. By a further specialisation $x_1 = \z$ and $x_2 =\z^{-1}$, one obtains a $p$-$\lambda$-ring $\Z[\z +\z^{-1}]$. If $f\in\Z[\z +\z^{-1}]$ is a sum of powers of $\zeta_p$, then $\lambda^i(f)$ is given by the elementary polynomial $e_{i}$ evaluated at these powers.
		
		For example, if $f=2\z^2 + 1 + 2\z^{-2}$, then $\lambda^3 f= e_3(\z^2,\z^2,1,\z^{-2},\z^{-2}) = \z^4 + 2\z^2 + 4 + 2\z^{-2} + \z^{-4}$.
	\end{example}
	
	\begin{example}\label{p-lambda third example}
		Another example of a $p$-$\lambda$-ring is the representation ring of an arbitrary group (modulo projectives) in characteristic $p$. $\lambda^i$ is given by the $i$th exterior power and the axioms can be recovered using formal characters. This makes $\overline{R(G)}$ into a $p$-$\lambda$-ring. Using Lemma~\ref{initial modular plethysms lemma}(i), we can see that $R_I$ is closed under the operations $\lambda^i$ (with $i<p$), in other words, for any $0\leq l \leq p-2$ the module $\bigwedge^i \Sym^l E$ has all the non-projective indecomposable summands irreducible. Therefore $R_I$ is a $p$-$\lambda$-subring of $\overline{R(G)}$.
	\end{example}
	
	For any partition $\nu$ we can define an operation $\left\lbrace \nu \right\rbrace $ on $\lambda$-rings by $\left\lbrace \nu\right\rbrace x = \det(\lambda^{\nu'_i +j -i}(x))_{i,j\leq \nu_1}$. This applies to $p$-$\lambda$-rings as long as $\nu_1 + \ell(\nu)-1 < p$. In particular, if $\nu$ is $p$-small. For such $\nu$ one can use formal characters to see that the operation $\left\lbrace \nu\right\rbrace $ on the representation ring of an arbitrary group (modulo projectives) coincides with the Schur functor $\nabla^{\nu}$. In the case of the $p$-$\lambda$-ring $\Z[\z + \z^{-1}]$ from Example~\ref{p-lambda second example} the operation $\left\lbrace \nu \right\rbrace$ applied to $f\in\Z[\z +\z^{-1}]$ which is a sum of powers of $\zeta_p$ returns the Schur function $s_{\nu}$ evaluated at these powers.
	
	\begin{example}
		If $f=2\z^2 +1 +2\z^{-2}\in \Z[\z + \z^{-1}]$, then $\left\lbrace (2,1) \right\rbrace f=s_{2,1}(\z^2,\z^2,1,\z^{-2},\z^{-2})=2\z^6 + 4\z^4 + 10\z^2 + 8 +10\z^{-2} +4\z^{-4} + 2\z^{-6}$.
	\end{example}  
	
	\section{Endotrivial modules}\label{endo sec}
	
	Throughout the section let $H$ be a finite group of order divisible by $p$. Recall that a $kH$-module $V$ is called \textit{endotrivial} if $V\otimes V^* \piso k$. Note that alternatively one may replace this condition by the following equivalent version: a $kH$-module $V$ is endotrivial if and only if there is a $kH$-module $W$ such that $V\otimes W\piso k$. In particular, if there is a positive integer $n$ such that $V^{\otimes n}\piso k$, then $V$ is endotrivial. 
	
	An example of an endotrivial module is $\Omega k$ where $k$ is the trivial $kH$-module and $\Omega$ is the Heller operator. This easily follows from basic properties of the Heller operator (see Proposition~\ref{Heller prop}(v) with $V=\Omega^{-1} k$).
	
	A different example comes from the Clebsch--Gordan rule. 
	
	\begin{example}\label{endo examples}
		The $kG$-module $\Sym^{p-2} E$ is endotrivial since $\Sym^{p-2} E\otimes \Sym^{p-2} E\piso k$. Alternatively, we may write this as $\U_{p-2}^2 = \U_0$.   
	\end{example}
	
	A useful property of endotrivial modules is summarised by this well-known lemma.
	
	\begin{lemma}\label{endo lemma}
		Suppose that $V$ is an endotrivial $kH$-module and $W$ is an arbitrary non-projective indecomposable $kH$-module. Then the tensor product $V\otimes W$ has a unique non-projective indecomposable summand.
	\end{lemma}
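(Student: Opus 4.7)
The plan is to exploit the fact that tensoring with an endotrivial module is a stable autoequivalence: since $V$ is endotrivial, there is a $kH$-module $V^*$ (the dual of $V$) with $V \otimes V^* \piso k$, so $V^* \otimes V \otimes W \piso W$. Concretely, I would fix a Krull--Schmidt decomposition $V \otimes W \cong W_1 \oplus \dots \oplus W_n \oplus P$, where $W_1, \dots, W_n$ are non-projective indecomposables (with possible repetitions) and $P$ is projective. The goal is to show that $n = 1$.

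For the lower bound $n \geq 1$, I would argue that $V \otimes W$ cannot be projective. Indeed, if $V \otimes W$ were projective, then so would $V^* \otimes (V \otimes W)$ be, since tensoring any module with a projective yields a projective. But $V^* \otimes V \otimes W \piso W$ is non-projective, a contradiction.

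For the upper bound, the key auxiliary observation is that for any non-projective indecomposable summand $W_i$, the module $V^* \otimes W_i$ is again non-projective; otherwise $V \otimes V^* \otimes W_i \piso W_i$ would be projective by the same argument, contradicting non-projectivity of $W_i$. Hence each $V^* \otimes W_i$ contributes at least one non-projective indecomposable summand to its own Krull--Schmidt decomposition. Applying $V^* \otimes -$ to the displayed decomposition gives
\[
W \piso V^* \otimes V \otimes W \cong (V^* \otimes W_1) \oplus \dots \oplus (V^* \otimes W_n) \oplus (V^* \otimes P),
\]
whose right-hand side therefore has at least $n$ non-projective indecomposable summands in total (counted with multiplicity). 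Since $W$ itself contributes exactly one such summand, uniqueness of Krull--Schmidt forces $n \leq 1$, and combined with the lower bound we conclude $n = 1$.

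No substantial obstacle is expected: the whole argument is bookkeeping of multiplicities in the stable module category, driven by the fact that $\overline{V}$ is a unit in $\overline{R(H)}$ with inverse $\overline{V^*}$. The only subtlety worth highlighting is that the claim is about the multiplicity-counted Krull--Schmidt decomposition, which is why one needs to verify the non-projectivity of each $V^* \otimes W_i$ individually rather than only of the whole tensor product.
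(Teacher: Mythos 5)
Your proof is correct and takes essentially the same approach as the paper: establish that $V\otimes W'$ is non-projective for any non-projective indecomposable $W'$ (via the $V^*\otimes V\otimes W'\piso W'$ trick), then apply $V^*\otimes-$ to the Krull--Schmidt decomposition of $V\otimes W$ and count non-projective summands against the single one in $W$. Your version just spells out the multiplicity bookkeeping slightly more explicitly than the paper does.
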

	
	\begin{proof}
		We start by observing that the module $V\otimes W$ is not projective. This is true as otherwise the module $V^*\otimes V\otimes W$ would also be projective. But $V^*\otimes V\otimes W\piso k\otimes W \piso W$ using the endotriviality of $V$. Note that this observation can also be applied to the endotrivial module $V^*$ in place of $V$.
		
		Now, suppose that $V\otimes W$ has at least two non-projective indecomposable summands. By the previous paragraph so does $V^*\otimes\left( V\otimes W\right) $. But we have $V^*\otimes V\otimes W\piso W$, a contradiction.
	\end{proof}
	
	\begin{remark}\label{endo rmk}
		In fact, a more general statement that tensoring with an endotrivial $kH$-module $V$ defines a stable equivalence is true; see, for instance, \cite[p.~1]{CarlsonEndotrivialpGroups98}.
	\end{remark}
	
	We are now ready to prove the result about interchanging a Schur functor and tensoring with an endotrivial module. This follows rather easily once we establish that if $V$ is endotrivial, then for a $p$-small partition $\nu$ (recall this means that $|\nu| <p$) the module $\nabla^{\nu} V$ is almost always projective.
	
	\begin{proposition}\label{endo prop}
		Let $V$ be an endotrivial $kH$-module of dimension $d$ and let $\nu$ be any partition of size $n<p$.
		\begin{enumerate}[label=\textnormal{(\roman*)}]
			\item
			\begin{align*}
			V^{\otimes n}\piso	
			\begin{cases}
			\Sym^n V &\text{ if } d\equiv 1 \textnormal{ mod } p,\\
			\bigwedge^n V &\text{ if } d\equiv -1 \textnormal{ mod } p.\\ 
			\end{cases}
			\end{align*} 
			\item The module $\nabla^{\nu} V$ is projective unless $d\equiv 1$ mod $p$ and $\nu = (n)$ \emph{or} $d\equiv -1$ mod $p$ and $\nu = (1^n)$.  
			\item For $W$ an arbitrary $kH$-module
			\begin{align*}
			\nabla^{\nu}(V\otimes W)\piso
			\begin{cases}
			V^{\otimes n}\otimes \nabla^{\nu} W & \text{ if } d\equiv 1 \textnormal{ mod } p,\\
			V^{\otimes n}\otimes \nabla^{\nu'} W & \text{ if } d\equiv -1 \textnormal{ mod } p.\\
			\end{cases}
			\end{align*}
		\end{enumerate}
	\end{proposition}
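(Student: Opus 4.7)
The plan is to prove (ii) first, from which (i) and (iii) will follow quickly.

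For (ii) the key observation is that $V^{\otimes n}$ is itself endotrivial, since $V^{\otimes n} \otimes (V^{\otimes n})^* \cong (V \otimes V^*)^{\otimes n} \piso k^{\otimes n} = k$. Applying Lemma~\ref{endo lemma} to the endotrivial module $V^{\otimes n}$ and the non-projective indecomposable module $W = k$ (non-projective since $p \mid |H|$) shows that $V^{\otimes n}$ has a unique non-projective indecomposable summand, with multiplicity one. Combining this with the Schur--Weyl decomposition $V^{\otimes n} \cong \bigoplus_{\lambda \vdash n} f^{\lambda} \nabla^{\lambda} V$ from Theorem~\ref{SW duality thm} (where $f^{\lambda} = \dim S^{\lambda}$), exactly one $\lambda_0 \vdash n$ can have $\nabla^{\lambda_0} V$ non-projective, and necessarily $f^{\lambda_0} = 1$. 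This forces $\lambda_0 \in \{(n), (1^n)\}$.

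To decide which, I compute dimensions modulo $p$ via the standard hook-length formula: $\dim \Sym^n V = \binom{d+n-1}{n}$ and $\dim \bigwedge^n V = \binom{d}{n}$. Lucas' theorem (using $n < p$) gives, for $d \equiv 1 \pmod p$, $\binom{d+n-1}{n} \equiv 1$ and $\binom{d}{n} \equiv 0 \pmod p$ (when $n \geq 2$); the roles reverse for $d \equiv -1 \pmod p$, where $\binom{d}{n} \equiv (-1)^n \pmod p$. Since a module of dimension not divisible by $p$ cannot be projective, this pins down $\lambda_0 = (n)$ in the first case and $\lambda_0 = (1^n)$ in the second, proving (ii). Part (i) is now immediate: $V^{\otimes n} \piso f^{\lambda_0} \nabla^{\lambda_0} V = \nabla^{\lambda_0} V$, which is $\Sym^n V$ or $\bigwedge^n V$ accordingly.

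For (iii), the Kronecker version of Lemma~\ref{Schur functor expansion lemma} gives
\[
\nabla^{\nu}(V \otimes W) \cong \bigoplus_{\lambda, \mu \vdash n} g^{\nu}_{\lambda\mu}\, \nabla^{\lambda} V \otimes \nabla^{\mu} W.
\]
By (ii) every summand with $\lambda \neq \lambda_0$ is projective, so modulo projectives only $\lambda = \lambda_0$ survives. Substituting $\nabla^{\lambda_0} V \piso V^{\otimes n}$ from (i) and invoking Lemma~\ref{Kronecker coefficients lemma} (so that $g^{\nu}_{(n),\mu} = \delta_{\mu,\nu}$ and $g^{\nu}_{(1^n),\mu} = \delta_{\mu,\nu'}$) yields the two claimed formulas. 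The main conceptual work is carried by the endotriviality of $V^{\otimes n}$ combined with Lemma~\ref{endo lemma}: these collapse the Schur--Weyl decomposition to essentially a single partition, and the dimension-mod-$p$ calculation then cleanly identifies which one.
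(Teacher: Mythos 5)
Your argument is correct and uses the same essential ingredients as the paper (Schur--Weyl duality, Lemma~\ref{endo lemma}, and the binomial-coefficient computation mod $p$), but you arrange them in a somewhat different order and use one genuinely cleaner observation. Where the paper establishes that $V^{\otimes n}$ has a unique non-projective indecomposable summand by applying Lemma~\ref{endo lemma} $n$ times along the chain $k, V, V^{\otimes 2}, \dots, V^{\otimes n-1}$, you instead note that $V^{\otimes n}$ is itself endotrivial (since $V^{\otimes n}\otimes(V^{\otimes n})^*\cong(V\otimes V^*)^{\otimes n}\piso k$) and apply the lemma once with $W=k$; this is a tidy shortcut. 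You then prove (ii) before (i): the uniqueness of the non-projective summand forces $f^{\lambda_0}=1$ in the Schur--Weyl decomposition, which pins $\lambda_0$ down to $(n)$ or $(1^n)$ a priori, and the dimension-mod-$p$ calculation of $\binom{d+n-1}{n}$ and $\binom{d}{n}$ merely resolves the sign. The paper goes the other way, computing those same two dimensions first, observing that $\Sym^n V$ (resp.\ $\bigwedge^n V$) is then a non-projective summand of $V^{\otimes n}$, and hence (by uniqueness) \emph{the} non-projective part, from which (ii) falls out. Your $f^{\lambda_0}=1$ step is slightly redundant given the dimension calculation, but it clarifies why only the trivial and sign partitions could appear. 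Part (iii) is identical in both treatments.
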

	
	\begin{proof}
		Note that the module $V^{\otimes n}$ has precisely one non-projective indecomposable summand. This is because we can apply Lemma~\ref{endo lemma} $n$ times consecutively to $k$, $V$, $V^{\otimes 2}, \dots, V^{\otimes n-1}$, tensoring with $V$ each time. 
		\begin{enumerate}[label=\textnormal{(\roman*)}]
			\item Recall that the dimension of $\Sym^n V$ is $\binom{n+d-1}{n}$ and the dimension of $\bigwedge^n V$ is $\binom{d}{n}$. If $d\equiv 1$ mod $p$, then the first binomial coefficient is not divisible by $p$ (since the $n$ factors in the numerator are congruent to $n,n-1,\dots, 1$ mod $p$ and $n<p$), and hence $\Sym^n V$ is not projective. Similarly, if $d\equiv -1$ mod $p$, then the other binomial coefficient is not divisible by $p$ (this time the reminders are $-1,-2,\dots,-n$ and $n<p$) and $\bigwedge^n V$ is not projective. Since both $\Sym^n V$ and $\bigwedge^n V$ are summands of $V^{\otimes n}$ using Schur--Weyl duality (Theorem~\ref{SW duality thm}) and since the module $V^{\otimes n}$ has only one non-projective indecomposable summand, as noted above, this summand is (modulo projectives) $\Sym^n V$ if $d\equiv 1$ mod $p$ or $\bigwedge^n V$ if $d\equiv -1$ mod $p$.
			
			\item This follows from Schur--Weyl duality and (i).
			
			\item By Lemma~\ref{Schur functor expansion lemma}, the module $\nabla^{\nu}(V\otimes W)$ is isomorphic to the direct sum $\bigoplus_{\lambda, \mu \vdash n}g_{\lambda\mu}^{\nu} \left( \nabla^{\lambda} V \otimes \nabla^{\mu} W\right) $ where $g_{\lambda\mu}^{\nu}$ are the Kronecker coefficients. By (i) and (ii) we know that the only non-projective module $\nabla^{\lambda} V$ is either $\Sym^n V$ if $d\equiv 1$ mod $p$ or $\bigwedge^n V$ if $d\equiv -1$ mod $p$. Moreover, in both of these cases this module is isomorphic to $V^{\otimes n}$ modulo projectives. Hence we have
			\begin{align*}
			\nabla^{\nu}(V\otimes W)\piso
			\begin{cases}
			\bigoplus_{\mu\vdash n} g_{(n)\mu}^{\nu} \left( V^{\otimes n}\otimes \nabla^{\mu} W\right)  & \text{ if } d\equiv 1 \textnormal{ mod } p,\\
			\bigoplus_{\mu\vdash n} g_{(1^n) \mu}^{\nu}\left( V^{\otimes n}\otimes \nabla^{\mu} W\right)  & \text{ if } d\equiv -1 \textnormal{ mod } p.\\
			\end{cases}
			\end{align*}
			Recall from Lemma~\ref{Kronecker coefficients lemma} that $g_{(n)\mu}^{\nu}$ is $1$ if $\mu =\nu$ and zero otherwise and $g_{(1^n)\mu}^{\nu}$ is $1$ if $\mu =\nu'$ and zero otherwise. This immediately yields the result.
			\qedhere
		\end{enumerate}  
	\end{proof} 
	
	Applying Proposition~\ref{endo prop}(iii) to the endotrivial $kH$-module $\Omega k$ and the endotrivial $kG$-module $\Sym^{p-2} E$ from Example~\ref{endo examples} gives us two useful corollaries. In the proofs of these corollaries we freely use that if $\nu$ is a partition of $n$ with $n<p$, then the Schur functor $\nabla^{\nu}$ is well-defined in the stable module category (Corollary~\ref{well defined functors cor}).  
	
	\begin{corollary}\label{Heller endo cor}
		If $H$ is a finite group of order divisible by $p$, $W$ is a $kH$-module and $\nu$ is a partition of $n$ where $n<p$, then $\nabla^{\nu} (\Omega W)\piso \Omega^n (\nabla^{\nu'} W)$.
	\end{corollary}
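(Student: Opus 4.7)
The plan is to apply Proposition~\ref{endo prop}(iii) with the endotrivial module $V = \Omega k$, which is available since Proposition~\ref{Heller prop}(v) (taking $V = \Omega^{-1} k$) gives $(\Omega k) \otimes \Omega^{-1} k \piso k$. To know which case of Proposition~\ref{endo prop}(iii) applies, I first need to determine $\dim \Omega k$ modulo $p$. Since $\Omega k$ is, by definition, the kernel of the surjection from the projective cover $P(k)$ onto $k$, we have $\dim \Omega k = \dim P(k) - 1$; and as every projective $kH$-module has dimension divisible by $|H|_p$ (hence by $p$, since $p \mid |H|$), we conclude $\dim \Omega k \equiv -1 \pmod{p}$. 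So we are in the second case of Proposition~\ref{endo prop}(iii).

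Having confirmed this, I would apply Proposition~\ref{endo prop}(iii) to $V = \Omega k$ and the $p$-small partition $\nu$, which yields
\[
\nabla^{\nu}\bigl((\Omega k) \otimes W\bigr) \piso (\Omega k)^{\otimes n} \otimes \nabla^{\nu'} W.
\]
Then I would rewrite both sides using Proposition~\ref{Heller prop}(v). On the left, a single application gives $(\Omega k) \otimes W \piso \Omega W$, and well-definedness of $\nabla^\nu$ on the stable module category (Corollary~\ref{well defined functors cor}) lets me pull this through to $\nabla^{\nu}(\Omega W)$. On the right, I would iterate Proposition~\ref{Heller prop}(v) a total of $n$ times: starting from $\nabla^{\nu'} W$ and tensoring with $\Omega k$ repeatedly gives $(\Omega k)^{\otimes n} \otimes \nabla^{\nu'} W \piso \Omega^n(\nabla^{\nu'} W)$ by induction on $n$. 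Combining these two identifications yields exactly $\nabla^{\nu}(\Omega W) \piso \Omega^n(\nabla^{\nu'} W)$.

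There is no serious obstacle here; once the preparatory work in Proposition~\ref{endo prop} is in place the corollary is almost a direct specialisation. The only subtle point is the dimension count $\dim \Omega k \equiv -1 \pmod p$, which explains why the conjugate partition $\nu'$ (rather than $\nu$) appears on the right-hand side — this is also the reason the statement produces $\Omega^n$ at the ``outside'' of $\nabla^{\nu'}$, matching up with the factor $(\Omega k)^{\otimes n}$ coming from the endotrivial substitution.
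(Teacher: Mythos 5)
Your proof is correct and follows exactly the same route as the paper: take $V=\Omega k$ in Proposition~\ref{endo prop}(iii), observe $\dim\Omega k \equiv -1 \pmod p$ since projective modules have $p$-divisible dimension, then translate tensoring with $\Omega k$ back into applications of $\Omega$ via Proposition~\ref{Heller prop}(v). The extra care you take with Corollary~\ref{well defined functors cor} to justify passing $(\Omega k)\otimes W \piso \Omega W$ through $\nabla^\nu$ is a detail the paper elides but is a sound observation.
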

	
	\begin{proof}
		Take $V=\Omega k$ in Proposition~\ref{endo prop}(iii). Since $k$ has dimension $1$ and its projective cover has dimension divisible $p$, we conclude that the dimension of $V$ is congruent to $-1$ modulo $p$. Hence $\nabla^{\nu}((\Omega k)\otimes W) \piso \left( \Omega k\right)^{\otimes n} \otimes \nabla^{\nu'} W$. This is the required result once tensoring with $\Omega k$ is replaced by applying the Heller operator $\Omega$ (according to Proposition~\ref{Heller prop}(v)).
	\end{proof}
	
	\begin{corollary}\label{p-2 endo cor}
		For $0\leq l\leq p-2$ and $\nu\vdash n$ with $n<p$ we have
		\begin{align*}
		\nabla^{\nu} \Sym^{p-2-l} E\piso
		\begin{cases}
		\nabla^{\nu'} \Sym^l E & \text{ if } n \text{ is even},\\
		\Sym^{p-2} E \otimes \nabla^{\nu'} \Sym^l E & \text{ if } n \text{ is odd}.\\
		\end{cases}
		\end{align*}
	\end{corollary}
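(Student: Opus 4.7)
The plan is to apply Proposition~\ref{endo prop}(iii) with the endotrivial module $V=\Sym^{p-2} E$ from Example~\ref{endo examples}. First, I would rewrite the input $\Sym^{p-2-l} E$ as a tensor product involving $\Sym^{p-2} E$. By Corollary~\ref{CG rule cor}(ii) we have $\U_{p-2}\cdot \U_l = \U_{p-2-l}$, that is, $\Sym^{p-2} E\otimes \Sym^l E \piso \Sym^{p-2-l} E$. Since $\nu$ is $p$-small, Corollary~\ref{well defined functors cor} ensures that $\nabla^\nu$ is a well-defined functor on the stable module category, so applying it to both sides gives
\[
\nabla^\nu \Sym^{p-2-l} E \piso \nabla^\nu\bigl(\Sym^{p-2} E\otimes \Sym^l E\bigr).
\]

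Next, I would invoke Proposition~\ref{endo prop}(iii) with $V=\Sym^{p-2} E$ and $W=\Sym^l E$. Since $\dim \Sym^{p-2} E = p-1 \equiv -1 \pmod p$, the relevant case of the proposition applies and yields
\[
\nabla^\nu\bigl(\Sym^{p-2} E\otimes \Sym^l E\bigr) \piso \bigl(\Sym^{p-2} E\bigr)^{\otimes n}\otimes \nabla^{\nu'} \Sym^l E.
\]
Finally, the endotriviality $\U_{p-2}^{2} = \U_0$ from Example~\ref{endo examples} shows that, modulo projectives, $(\Sym^{p-2} E)^{\otimes n}$ equals $k$ when $n$ is even and $\Sym^{p-2} E$ when $n$ is odd. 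Substituting these values produces the two cases in the statement.

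There is no real obstacle: every step is a direct application of a result already in hand. The only mildly delicate point is that passing the stable isomorphism $\Sym^{p-2-l} E \piso \Sym^{p-2} E\otimes \Sym^l E$ through $\nabla^\nu$ preserves the stable isomorphism class; this follows by unfolding the definition of $\piso$ (adding compensating projectives to each side) and invoking Corollary~\ref{well defined functors cor} on each side separately.
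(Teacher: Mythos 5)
Your proposal is correct and follows essentially the same approach as the paper's proof: apply Proposition~\ref{endo prop}(iii) with $V=\Sym^{p-2}E$, $W=\Sym^l E$, use $\dim V\equiv -1\pmod p$, and then simplify via Corollary~\ref{CG rule cor}(ii). The only difference is that you spell out explicitly the preliminary rewriting $\Sym^{p-2-l}E\piso\Sym^{p-2}E\otimes\Sym^l E$ and the well-definedness of $\nabla^\nu$ in the stable category, steps the paper leaves implicit.
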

	
	\begin{proof}
		Take $H=G$, $V=\Sym^{p-2} E$ and $W=\Sym^l E$ in Proposition~\ref{endo prop}(iii). The dimension of $V$ is $p-1$ which is $-1$ modulo $p$. Therefore we obtain $\nabla^{\nu}(\Sym^{p-2} E\otimes \Sym^l E)\piso \left( \Sym^{p-2} E \right) ^{\otimes n}\otimes \nabla^{\nu'} \Sym^l E$. We immediately get the desired result after using the rule for tensoring with $\Sym^{p-2} E$ modulo projectives from Corollary~\ref{CG rule cor}(ii).
	\end{proof}
	
	\section{The representation ring of $k\SL$ modulo projectives}
	
	Recall the rings $R_I$ and $R_E$ form \S\ref{notation sec}. We can describe these rings using a primitive $p$th root of unity, which we again denote by $\zeta_p$. This idea appears in Almkvist \cite[\S3]{AlmkvistReciprocity81} where the stable category of $k[\Z/p\Z]$ is analysed. One can recover the first two parts of the below result from Almkvist but we present an independent proof to make this paper self-contained. The statement we obtain is as follows.
	
	\begin{proposition}\label{Iso of R_I prop}
		Define a map $\Theta\!: R_I\to \Z[\zeta_p + \zeta_p^{-1}]$ of free abelian groups by $\Theta(\U_l) = \zeta_p^{-l} + \zeta_p^{-l +2} + \dots + \zeta_p^l$ for $0\leq l\leq p-2$. 
		\begin{enumerate}[label=\textnormal{(\roman*)}]
			\item The map $\Theta$ is a ring homomorphism.
			
			\item The map $\Theta$ is surjective and its kernel is generated by $\U_0+\U_{p-2}$ (as an ideal).
			
			\item The restriction $\Theta|_{R_E}$ is a ring isomorphism.
			
			\item The map $\Psi_0\!: R_E[Y]/(Y^2-1) \to R_I$ which maps $R_E$ into itself inside $R_I$ and $Y$ to $\U_{p-2}$ is an isomorphism of rings.
			
			\item The map $\Psi_1\!: \Z[\zeta_p + \zeta_p^{-1}][Y]/(Y^2-1) \to R_I$ given by identifying $\Z[\zeta_p + \zeta_p^{-1}]$ with $R_E$ using $(\Theta|_{R_E})^{-1}$ and by sending $Y$ to $\U_{p-2}$ is a ring isomorphism.    
		\end{enumerate}
	\end{proposition}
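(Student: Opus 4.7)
The plan is to prove the five parts in order, with (i) as the technical core and (ii)--(v) following from rank arguments together with the structural facts of Corollary~\ref{CG rule cor}.

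For (i), I would exploit Corollary~\ref{CG rule cor}(iv), which says $R_I=\Z[\U_1]$, and realise $\Theta$ by factoring two natural surjective ring homomorphisms through a common quotient. The Clebsch--Gordan identities $\U_1\U_l=\U_{l-1}+\U_{l+1}$ for $1\leq l\leq p-3$ together with $\U_1\U_{p-2}=\U_{p-3}$ are precisely the Chebyshev recursion of the second kind (with $U_0=1$, $U_1=X$, $U_{n+1}=XU_n-U_{n-1}$), so $\U_l=U_l(\U_1)$ for $0\leq l\leq p-2$ and $U_{p-1}(\U_1)=\U_1\U_{p-2}-\U_{p-3}=0$ in $R_I$. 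Hence the surjection $\Z[X]\twoheadrightarrow R_I$ sending $X\mapsto \U_1$ descends to an isomorphism $\Z[X]/(U_{p-1}(X))\cong R_I$, as both are free of rank $p-1$ with matching bases and any surjection between free $\Z$-modules of equal finite rank is an isomorphism. On the other side, the classical identity $U_n(2\cos\theta)=\sin((n+1)\theta)/\sin\theta$ gives $U_{p-1}(\z+\z^{-1})=0$, so the surjection $\Z[X]\twoheadrightarrow \Z[\z+\z^{-1}]$ sending $X\mapsto \z+\z^{-1}$ also factors through $\Z[X]/(U_{p-1}(X))$. The composition yields a ring homomorphism $R_I\to\Z[\z+\z^{-1}]$ sending $\U_l=U_l(\U_1)$ to $U_l(\z+\z^{-1})$, and a geometric-series computation identifies this image with $\z^{-l}+\z^{-l+2}+\cdots+\z^l=\Theta(\U_l)$, so the constructed ring map agrees with $\Theta$ on generators.

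For (ii), surjectivity is immediate since $\Theta(\U_1)=\z+\z^{-1}$ generates $\Z[\z+\z^{-1}]$ as a $\Z$-algebra. The key identity $\Theta(\U_{p-2-l})=-\Theta(\U_l)$ for $0\leq l\leq p-2$ follows from the telescoping closed form $(\z-\z^{-1})\Theta(\U_l)=\z^{l+1}-\z^{-l-1}$ together with $\z^p=1$, working inside the integral domain $\Z[\z]$. In particular $\U_0+\U_{p-2}\in\ker\Theta$. By Example~\ref{Ideal example}, the ideal $(\U_0+\U_{p-2})$ is freely spanned as an abelian group by $\{\U_{2i}+\U_{p-2-2i}:0\leq i\leq (p-3)/2\}$, and each of these lies in $\ker\Theta$ by the same identity. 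Since this ideal has $\Z$-rank $(p-1)/2$, the induced surjection $R_I/(\U_0+\U_{p-2})\twoheadrightarrow \Z[\z+\z^{-1}]$ is between free abelian groups of equal rank $(p-1)/2$, hence an isomorphism, giving $\ker\Theta=(\U_0+\U_{p-2})$.

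Parts (iii)--(v) are then formal. For (iii), each basis element $\U_{2i}+\U_{p-2-2i}$ of $\ker\Theta$ has one even- and one odd-indexed summand (since $p-2$ is odd), so $\ker\Theta\cap R_E=0$; surjectivity follows from the bijection $l\mapsto p-2-l$ between odd and even indices, which yields $\Theta(R_O)=-\Theta(R_E)=\Theta(R_E)$ and hence $\Theta(R_E)=\Theta(R_I)=\Z[\z+\z^{-1}]$. For (iv), the relation $\U_{p-2}^2=\U_0$ from Corollary~\ref{CG rule cor}(ii) makes $\Psi_0$ well-defined; bijectivity is immediate from $R_E\cdot\U_{p-2}=R_O$ (applying Corollary~\ref{CG rule cor}(ii) again, which sends even-indexed generators of $R_E$ bijectively to odd-indexed ones) and the direct-sum decomposition $R_I=R_E\oplus R_O$. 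Part (v) is just the composition of $\Psi_0$ with the ring isomorphism $\Z[\z+\z^{-1}][Y]/(Y^2-1)\to R_E[Y]/(Y^2-1)$ induced by $(\Theta|_{R_E})^{-1}$. The main obstacle is (i), and specifically recognising that the Clebsch--Gordan rule furnishes the Chebyshev recursion with boundary relation $U_{p-1}(\U_1)=0$; once this identification is made, all remaining claims reduce to rank computations and the symmetry $\Theta(\U_{p-2-l})=-\Theta(\U_l)$.
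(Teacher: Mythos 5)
Your proof is correct, and it takes a genuinely different route from the paper in parts (i)--(iii). For (i), the paper verifies multiplicativity by a direct check: it shows $\Theta(\U_l\U_1)=\Theta(\U_l)\Theta(\U_1)$ for each $l$ using Corollary~\ref{CG rule cor}(i) and the vanishing of $\zeta_p^{1-p}+\zeta_p^{3-p}+\cdots+\zeta_p^{p-1}$, then appeals to Corollary~\ref{CG rule cor}(iv) to propagate this to full multiplicativity. Your Chebyshev argument instead constructs the ring homomorphism abstractly by identifying both $R_I$ and $\Z[\z+\z^{-1}]$ as quotients of $\Z[X]$ through $\Z[X]/(U_{p-1}(X))$, and then checks that the induced map agrees with $\Theta$ on the basis via $U_l(\z+\z^{-1})=\z^{-l}+\cdots+\z^l$. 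This is cleaner in that the ring-homomorphism property is automatic rather than propagated from a restricted check, and it explains structurally where the relation $\U_0+\U_{p-2}=0$ modulo the kernel comes from (it is $U_{p-1}=0$). For (ii) and (iii), the paper computes an explicit $\Z$-basis of $\Z[\z+\z^{-1}]$ (the images of $\U_0,\U_2,\dots,\U_{p-3}$, identified as Galois conjugates of the standard basis $\z^{-i}+\cdots+\z^{i}$) and reads off surjectivity, the kernel, and the injectivity of $\Theta|_{R_E}$ simultaneously. You instead argue by rank: both $R_I/(\U_0+\U_{p-2})$ and $\Z[\z+\z^{-1}]$ are free of rank $(p-1)/2$, so the induced surjection is an isomorphism, and injectivity of $\Theta|_{R_E}$ follows from computing $\ker\Theta\cap R_E=0$ on the explicit basis of the kernel. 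Both are valid; the paper's explicit basis gives slightly more information, while your rank argument needs only the degree of the real cyclotomic field. Parts (iv) and (v) of your argument coincide in substance with the paper's.
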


	\begin{remark}\label{l=p-1 rmk}
		The equality $\Theta(\U_l) = \zeta_p^{-l} + \zeta_p^{-l +2} + \dots + \zeta_p^l$ holds even for $l=p-1$ since for such $l$ it becomes $\Theta(0)=0$.
	\end{remark}
	
	\begin{proof}
		Throughout the proof if $R$ is a ring or an ideal, we refer to free generators of $R$ when considered as an abelian group as `free generators of $R$'. We similarly use the term `freely generated', meaning freely generated as an abelian group.
		We will use that $\Z[\zeta_p + \zeta_p^{-1}]$ is freely generated by the elements $(\zeta_p^2)^{-i} + (\zeta_p^2)^{-i+1} + \dots + (\zeta_p^2)^{i}$ with $0\leq i\leq (p-3)/2$. This is true as these elements are Galois conjugates of the free generators $\z^{-i} + \dots + \z^{i}$ (with $0\leq i\leq (p-3)/2$) of $\Z[\zeta_p + \zeta_p^{-1}]$.    
		\begin{enumerate}[label=\textnormal{(\roman*)}]
			\item Note that $\Theta(\U_0)=1$, which means the multiplicative identity is preserved by $\Theta$. Since $R_I$ is generated by $\U_1$ as a $\Z$-algebra (Corollary~\ref{CG rule cor}(iv)), to check that $\Theta$ is a ring homomorphism we just need to show that $\Theta(x\cdot\U_1) = \Theta(x)\Theta(\U_1)$ for all $x\in R_I$. In fact, we only need to check this for $x=\U_l$ for $0\leq l\leq p-2$ as these elements generate $R_I$ as an abelian group. This is a routine check using Corollary~\ref{CG rule cor}(i) and the equality $\zeta_p^{1-p} + \zeta_p^{3-p} + \dots + \zeta_p^{p-1}=0$.
			
			\item Since $\Theta(\U_0 + \U_{p-2}) = 1+\zeta_p^{2-p} + \dots + \zeta_p^{p-2} = \zeta_p^{-p} + \zeta_p^{2-p} + \dots + \zeta_p^{p-2} = \z^{-p}\left( 1+ \z^2 + (\z^2)^2 + \dots +(\z^2)^{p-1} \right)  = 0$, the element $\U_0 + \U_{p-2}$ lies in the kernel. Now, using Example~\ref{Ideal example}, the ideal generated by $\U_0 + \U_{p-2}$ is freely generated by $\U_{2i} + \U_{p-2-2i}$ with $0\leq i\leq (p-3)/2$. Thus the quotient ring $R_I/(\U_0 + \U_{p-2})$ is freely generated by the images of $\U_0, \U_2,\dots, \U_{p-3}$ and these elements are mapped by $\Theta$ to $(\zeta_p^2)^{-i} + (\zeta_p^2)^{-i+1} + \dots + (\zeta_p^2)^{i}$ with $0\leq i\leq (p-3)/2$, which are free generators of $\Z[\zeta_p + \zeta_p^{-1}]$. Hence $\Theta$ is surjective and the ideal $(\U_0 + \U_{p-2})$ is equal to the kernel of $\Theta$. 
			
			\item We have already mentioned that the free generators $\U_0, \U_2,\dots, \U_{p-3}$ of $R_E$ are mapped to free generators of $\Z[\zeta_p + \zeta_p^{-1}]$, and thus this part is immediate.
			
			\item By Corollary~\ref{CG rule cor}(ii) $\U_{p-2}^2=\U_0$, and hence $\Psi_0$ is a well-defined homomorphism. Moreover, by the same result, $R_I$ is freely generated by the elements $\U_{2i}$ and $\U_{2i}\cdot \U_{p-2}$ with $0\leq i\leq (p-3)/2$, which establishes that $\Psi_0$ is an isomorphism. 
			
			\item This is clear from (iv).
			\qedhere
		\end{enumerate}
	\end{proof}
	
	Recall from Example~\ref{p-lambda second example} and Example~\ref{p-lambda third example} that both $R_I$ and $\Z[\z + \z^{-1}]$ are $p$-$\lambda$-rings. The next result is not needed in our proof of Theorem~\ref{rep ring thm} but it is vital for establishing the main results. Since it concerns the map $\Theta$ from Proposition~\ref{Iso of R_I prop} we include it here. 
	
	\begin{proposition}\label{Iso of p-lambda-rings prop}
		The map $\Theta\!: R_I \to \Z[\z + \z^{-1}]$ introduced in Proposition~\ref{Iso of R_I prop} is a surjective homomorphism of $p$-$\lambda$-rings.
	\end{proposition}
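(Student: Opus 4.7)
Since Proposition~\ref{Iso of R_I prop} already establishes that $\Theta$ is a surjective ring homomorphism, the remaining task is to verify compatibility with the $\lambda$-operations, i.e.\ that $\Theta(\lambda^i x) = \lambda^i(\Theta(x))$ for all $0 \leq i < p$ and $x \in R_I$. My plan is to reduce this verification to a finite direct computation on a single generator, and then carry out that computation.

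For the reduction, I will use the sum axiom $\lambda^i(x+y) = \sum_{j+k=i}\lambda^j(x)\lambda^k(y)$ and the product axiom $\lambda^i(xy) = P_i(\lambda^1 x,\dots,\lambda^i x, \lambda^1 y,\dots,\lambda^i y)$ for a universal polynomial $P_i$ (both valid for $i<p$ in a $p$-$\lambda$-ring), together with the identity $\lambda_t(-x) = 1/\lambda_t(x)$, which lets one express $\lambda^i(-x)$ as a polynomial in $\lambda^j(x)$ for $j \leq i$. An easy induction on the length of a polynomial expression then shows that a ring homomorphism between two $p$-$\lambda$-rings is automatically a homomorphism of $p$-$\lambda$-rings once it commutes with each $\lambda^i$ (for $0\leq i<p$) on a set of $\Z$-algebra generators of the source; the key point is that all indices appearing in these formulas remain below $p$. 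By Corollary~\ref{CG rule cor}(iv), the ring $R_I$ is generated as a $\Z$-algebra by $\U_1$, so it suffices to check $\Theta(\lambda^i \U_1) = \lambda^i(\Theta(\U_1))$ for each $0 \leq i < p$.

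For the explicit verification, note that $\lambda^i \U_1 = \overline{\bigwedge^i E}$, and since $E$ is two-dimensional with trivial determinant, $\bigwedge^0 E \cong \bigwedge^2 E \cong k$, $\bigwedge^1 E \cong E$, and $\bigwedge^i E = 0$ for $i\geq 3$; thus $\lambda^i \U_1$ equals $\U_0, \U_1, \U_0, 0, 0, \dots$ for $i=0,1,2,3,\dots$. On the other hand, by the definition of $\lambda^i$ in Example~\ref{p-lambda second example} we have $\lambda^i(\z+\z^{-1}) = e_i(\z,\z^{-1})$, which equals $1,\; \z+\z^{-1},\; 1,\; 0, 0, \dots$ for the same indices. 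Using $\Theta(\U_0)=1$ and $\Theta(\U_1)=\z+\z^{-1}$, the required equality holds case by case.

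The only nontrivial point I expect is the reduction to a single generator, which relies on a standard but unstated fact about $p$-$\lambda$-ring homomorphisms; the remainder is a short direct computation driven entirely by the fact that $\dim E = 2$.
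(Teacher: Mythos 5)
Your proof is correct and follows essentially the same route as the paper: reduce to checking $\Theta(\lambda^i \U_1) = \lambda^i(\Theta(\U_1))$ using that $\U_1$ generates $R_I$ as a $\Z$-algebra (Corollary~\ref{CG rule cor}(iv)), then verify this directly using $\bigwedge^2 E \cong k$, $\bigwedge^i E = 0$ for $i \geq 3$, and $e_i(\z,\z^{-1})$. The only difference is that you spell out the reduction to a single generator (via the sum, product, and negation axioms), which the paper states without justification; this is a reasonable elaboration of a step the paper treats as standard.
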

	
	\begin{proof}
		We already know that $\Theta$ is a surjective homomorphism of rings. Hence it remains to establish that for all $x\in R_I$ and all $1\leq i\leq p-1$ we have $\Theta(\lambda^i x) = \lambda^i(\Theta(x))$. According to Corollary~\ref{CG rule cor}(iv), $R_I$ is generated by $\U_1$ as a $\Z$-algebra, and hence we only need to check that $\Theta(\lambda^i x) = \lambda^i(\Theta(x))$ for $x = \U_1$. But this is easy as both sides are zero if $i>2$ and for $i=1$ and $i=2$ both sides equal $\z + \z^{-1}$ and $1$, respectively. This establishes the result.
	\end{proof}   
	
	With Proposition~\ref{Iso of R_I prop}(v) established we are ready to prove Theorem~\ref{rep ring thm}.
	
	\begin{proof}[Proof of Theorem~\ref{rep ring thm}]
		Note that $\mathbb{Z}[\z +\z^{-1}][X,Y]/(X^{p-1}-1, Y^2-1)$ contains the ring $S:=\mathbb{Z}[\zeta_p + \zeta_p^{-1}][Y]/(Y^2-1)$ as a subring and that we can write $\mathbb{Z}[\z +\z^{-1}][X,Y]/(X^{p-1}-1, Y^2-1) \cong S[X]/(X^{p-1}-1)$. Recall the isomorphism $\Psi_1\!: S \to R_I$ from Proposition~\ref{Iso of R_I prop}(v). Since $\Z[\z +\z^{-1}] = \Z[\z^2 + \z^{-2}]$, $\Psi_1$ can be defined by $\Psi_1(\z^2 + \z^{-2}) = \U_2 -\U_0$ and $\Psi_1(Y) = \U_{p-2}$. Therefore we can see that $\Psi$ restricted to $S$ is just $\Psi_1$ (considered as a map to $\overline{R(G)}$ rather than to $R_I$), and thus it is an isomorphism onto $R_I\leq \overline{R(G)}$.
		
		Hence to show that $\Psi$ is a well-defined isomorphism we just need to show that the map $\Phi\!: R_I[X]/(X^{p-1}-1) \to \overline{R(G)}$ which maps $R_I$ identically to $R_I$ inside $\overline{R(G)}$ and $X$ to $\overline{\Omega k}$ is a well-defined isomorphism. But this is clear from Corollary~\ref{kG-Omega cor}(iii).  
	\end{proof}
	
	\begin{example}\label{height and position example}
		Let us arrange the non-projective indecomposable $kG$-modules into two tables with $p-1$ rows and $(p-1)/2$ columns as follows. Label the rows of both tables by \textit{heights} $h\in \Z/(p-1)\Z$ such that the bottom rows are labelled by $0$ and the label $h$ increases by $1$ as we move upwards. We label the columns by \textit{positions} $0\leq c\leq (p-3)/2$ from left to right. Now put $\Omega^h \left( \Sym^{2c} E \right) $ into row $h$ and column $c$ of the first table and $\Omega^h\left( \Sym^{p-2c-2} E \right) $ into row $h$ and column $c$ of the second table. See Table~\ref{height and position table} for the tables in the case $p=7$.
		
		\begin{table}[h!]
			\hspace*{-0.9cm}
			\centering
			\begin{tabular}{ccc}
				\centering
				\begin{tabular}{c}
					$h\backslash c$\\
					$5$\\
					$4$\\
					$3$\\
					$2$\\
					$1$\\
					$0$\\
				\end{tabular}&
				\centering
				\begin{tabular}{|ccc|}
					\hline
					$0$&$1$&$2$\\
					\hline
					$\Omega^5 k $&$\Omega^5\left( \Sym^2 E \right) $&$\Omega^5\left( \Sym^4 E \right) $\\
					$\Omega^4 k $&$\Omega^4\left( \Sym^2 E \right) $&$\Omega^4\left( \Sym^4 E \right) $\\
					$\Omega^3 k $&$\Omega^3\left( \Sym^2 E \right) $&$\Omega^3\left( \Sym^4 E \right) $\\
					$\Omega^2 k $&$\Omega^2\left( \Sym^2 E \right) $&$\Omega^2\left( \Sym^4 E \right) $\\
					$\Omega k $&$\Omega\left( \Sym^2 E \right) $&$\Omega\left( \Sym^4 E \right) $\\
					$k $&$\Sym^2 E $&$\Sym^4 E $\\
					\hline
				\end{tabular}&
				\begin{tabular}{|ccc|}
					\hline
					$0$&$1$&$2$\\
					\hline
					$\Omega^5\left( \Sym^5 E \right) $&$\Omega^5\left( \Sym^3 E \right) $&$\Omega^5 E $\\
					$\Omega^4\left( \Sym^5 E \right) $&$\Omega^4\left( \Sym^3 E \right) $&$\Omega^4 E $\\
					$\Omega^3\left( \Sym^5 E \right) $&$\Omega^3\left( \Sym^3 E \right) $&$\Omega^3 E $\\
					$\Omega^2\left( \Sym^5 E \right) $&$\Omega^2\left( \Sym^3 E \right) $&$\Omega^2 E $\\
					$\Omega\left( \Sym^5 E \right) $&$\Omega\left( \Sym^3 E \right) $&$\Omega E $\\
					$\Sym^5 E $&$\Sym^3 E $&$E $\\
					\hline
				\end{tabular}\\
			\end{tabular}
			\vspace{4pt}
			\caption{Two tables from Example~\ref{height and position example} with $p=7$ labelled by heights $h\in \Z/6\Z$ and positions $c\in\left\lbrace 0,1,2\right\rbrace $.}
			\label{height and position table}
		\end{table}
		
		By Corollary~\ref{kG-Omega cor}(ii) each non-projective indecomposable $kG$-module, up to isomorphism, is in precisely one box of our tables. It is worth mentioning that our two tables correspond to the two blocks of $kG$ with non-trivial defect groups (see \cite[Exercise~13.2]{AlperinLocal86}). Moreover, under the isomorphism $\Psi$ from Theorem~\ref{rep ring thm} the multiplication by $X$ corresponds to increasing the height by $1$ and the multiplication by $Y$ corresponds to moving to the corresponding box in the other table.
		
		More generally, when tensoring two non-projective indecomposable $kG$-modules one can treat the position, height and choice of table separately. In particular, if $V$ and $W$ are two non-projective indecomposable $kG$-modules with positions $c_V, c_W$, heights $h_V, h_W$, lying in tables $T_V, T_W$, respectively, then all the non-projective indecomposable summands of $V\otimes W$ have height $h_V + h_W$, lie in the first table if $T_V=T_W$ and in the second table otherwise and their positions are given by the Clebsch--Gordan rule (they agree with the positions of the non-projective indecomposable summands of $\Sym^{2c_V} E \otimes \Sym^{2c_W} E$). In the context of Theorem~\ref{rep ring thm} this just says that when multiplying monomials of $\mathbb{Z}[\z +\z^{-1}][X,Y]/(X^{p-1}-1, Y^2-1)$ we can add powers of $X$, powers of $Y$ and multiply elements of $\Z[\z  +\z^{-1}]$ separately.
		
		For instance, if $p=7$, the non-projective indecomposable summands of $\Omega^2(\Sym^2 E)\otimes \Omega(\Sym^3 E)$ have height $2+1=3$, they lie in the second table and their positions are $0,1$ and $2$. Thus $\Omega^2(\Sym^2 E)\otimes \Omega(\Sym^3 E)\piso \Omega^3(\Sym^5 E) \oplus \Omega^3(\Sym^3 E) \oplus \Omega^3 E$.
	\end{example}
	
	\section{Modular plethysms of $E$ and classifications}
	
	Recall that we call a module stably-irreducible if it has precisely one non-projective indecomposable summand which is moreover irreducible and we say that a partition $\nu$ is $p$-small if $|\nu|<p$.
	
	For $\nu$ a $p$-small partition and $0\leq l\leq p-2$ we know from Lemma~\ref{initial modular plethysms lemma}(i) that all the indecomposable summands of $\nabla^{\nu} \Sym^l E$ are either projective or irreducible. Hence we can apply the map $\Theta\!: R_I\to \Z[\zeta_p + \zeta_p^{-1}]$ from Proposition~\ref{Iso of R_I prop} to $\overline{\nabla^{\nu} \Sym^l E}$. While $\Theta$ is not injective, we can `invert' it in particular cases.
	
	\begin{lemma}\label{invertibility of Theta lemma}
		Let $\mu$ and $\nu$ be $p$-small partitions and $0\leq j,l\leq p-2$. If $\Theta(\overline{\nabla^{\mu} \Sym^j E})=\Theta(\overline{\nabla^{\nu} \Sym^l E})$, then $\nabla^{\mu} \Sym^j E\piso \nabla^{\nu} \Sym^l E$.
	\end{lemma}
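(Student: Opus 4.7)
The plan is to use the full structural description of $R_I$ from Proposition~\ref{Iso of R_I prop} to analyse what $\Theta$ can identify, and then use the parity restriction from Lemma~\ref{initial modular plethysms lemma}(ii) together with non-negativity of multiplicities to show that nothing non-trivial is actually identified among classes of modular plethysms.

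First I will set the stage. By Lemma~\ref{initial modular plethysms lemma}(i) both $\overline{\nabla^{\mu}\Sym^j E}$ and $\overline{\nabla^{\nu}\Sym^l E}$ lie in $R_I$, and by Proposition~\ref{Iso of R_I prop}(iv) we have the decomposition $R_I=R_E\oplus \U_{p-2}\cdot R_E$ as abelian groups. By Lemma~\ref{initial modular plethysms lemma}(ii) and Corollary~\ref{CG rule cor}(ii), the class $\overline{\nabla^{\mu}\Sym^j E}$ is supported only on $\U_a$ with $a$ of a fixed parity, so it either lies in $R_E$ (even parity) or in $\U_{p-2}\cdot R_E$ (odd parity, since $p-2$ is odd), and similarly for $\overline{\nabla^{\nu}\Sym^l E}$. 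Write $\overline{\nabla^{\mu}\Sym^j E}=a_1+b_1\U_{p-2}$ and $\overline{\nabla^{\nu}\Sym^l E}=a_2+b_2\U_{p-2}$ with $a_i,b_i\in R_E$, each pair having at most one nonzero entry.

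Next I will use that $\Theta(\U_{p-2})=-1$ (a short computation with geometric series in $\z$, or equivalently the statement $\Theta(\U_0+\U_{p-2})=0$ from Proposition~\ref{Iso of R_I prop}(ii)). The hypothesis then becomes $\Theta(a_1)-\Theta(b_1)=\Theta(a_2)-\Theta(b_2)$ in $\Z[\z+\z^{-1}]$. In the two matched-parity cases (both even, or both odd) one side has $b_i=0$ or the other has $a_i=0$, and injectivity of $\Theta|_{R_E}$ from Proposition~\ref{Iso of R_I prop}(iii) immediately gives $a_1=a_2$ and $b_1=b_2$, hence $\overline{\nabla^{\mu}\Sym^j E}=\overline{\nabla^{\nu}\Sym^l E}$ and the conclusion follows from the convention of \S\ref{notation sec}.

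The main obstacle is the mismatched-parity case: say $b_1=0$ and $a_2=0$, giving $\Theta(a_1)=-\Theta(b_2)$. Here $\Theta|_{R_E}$ is still injective, but I must rule out a cancellation between the positive combination on the left and the negative combination on the right. Writing $a_1=\sum m_i\U_{2i}$ and $b_2=\sum n_i\U_{2i}$ with $m_i,n_i\in\Z_{\geq 0}$ (since they count indecomposable summands), and using that $\{\Theta(\U_{2i})\colon 0\leq i\leq (p-3)/2\}$ is a $\Z$-basis of $\Z[\z+\z^{-1}]$ (as already exploited in the proof of Proposition~\ref{Iso of R_I prop}(iii)), the equality $\sum(m_i+n_i)\Theta(\U_{2i})=0$ forces $m_i+n_i=0$ for every $i$, hence $m_i=n_i=0$. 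Thus $a_1=b_2=0$, so both $\nabla^{\mu}\Sym^j E$ and $\nabla^{\nu}\Sym^l E$ are projective, and in particular $\nabla^{\mu}\Sym^j E\piso \nabla^{\nu}\Sym^l E$. This completes all cases.
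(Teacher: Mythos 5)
Your proof is correct. It uses the same three essential ingredients as the paper's proof — the parity constraint from Lemma~\ref{initial modular plethysms lemma}(ii), the non-negativity of multiplicities of indecomposable summands, and the structure of $\Theta$ on $R_E$ — but organizes them differently. The paper writes the difference $\overline{\nabla^{\mu}\Sym^j E}-\overline{\nabla^{\nu}\Sym^l E}$ as an element of $\ker\Theta$ in the explicit basis $\{\U_{2i}+\U_{p-2-2i}\}$ and observes in a single stroke that any nonzero coefficient $a_i$ forces \emph{one} of the two plethysms to have both $\Sym^{2i}E$ and $\Sym^{p-2-2i}E$ as summands, contradicting parity; no case split is needed. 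You instead pre-sort each side into $R_E$ or $\U_{p-2}R_E$ using parity, invoke $\Theta(\U_{p-2})=-1$, and branch on whether the two parities match, using injectivity of $\Theta|_{R_E}$ in the matched case and non-negativity in the mismatched case. Both are valid; the paper's is a bit more economical, while yours makes the direct-sum decomposition $R_I=R_E\oplus\U_{p-2}R_E$ and the role of the involution $\U_{p-2}\cdot(-)$ more explicit. One small point worth stating cleanly: in the mismatched case, $b_2=\U_{p-2}\cdot\overline{\nabla^{\nu}\Sym^l E}$, so its coefficients in the basis $\{\U_{2i}\}$ are a permutation of the summand multiplicities of $\nabla^{\nu}\Sym^l E$ via $a\mapsto p-2-a$ (Corollary~\ref{CG rule cor}(ii)); that is why they are non-negative. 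Once you have that, $\Theta(a_1)=-\Theta(b_2)$ plus injectivity of $\Theta|_{R_E}$ already gives $a_1=-b_2$ in $R_E$, so the linear-independence step is not strictly needed — non-negativity of both sides immediately forces $a_1=b_2=0$.
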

	
	\begin{proof}
		The kernel of $\Theta$ is generated by $\U_{p-2} + \U_0$ as an ideal by Proposition~\ref{Iso of R_I prop}(ii). Thus it is freely generated as an abelian group by $\U_{2i} + \U_{p-2-2i}$ with $0\leq i \leq (p-3)/2$ (see Example~\ref{Ideal example}). Therefore $\overline{\nabla^{\mu} \Sym^j E} = \overline{\nabla^{\nu} \Sym^l E} + \sum_{i=0}^{(p-3)/2} a_i \left( \U_{2i} + \U_{p-2-2i}\right) $ for some integers $a_i$. We need to show that all the $a_i$ are zero.
		
		If some $a_i$ was positive, then $\Sym^{2i} E$ and $\Sym^{p-2-2i} E$ would both be summands of $\nabla^{\mu} \Sym^j E$; this contradicts Lemma~\ref{initial modular plethysms lemma}(ii) as $2i$ and $p-2-2i$ have different parities. Similarly, if some $a_i$ was negative, we would get that $\Sym^{2i} E$ and $\Sym^{p-2-2i} E$ are summands of $\nabla^{\nu} \Sym^l E$, a contradiction. Hence all the $a_i$ are zero, which establishes the result.
	\end{proof}
	
	We can now transform the notion of $\nabla^{\nu} \Sym^l E$ being stably-irreducible (or projective) to a property of $\Theta\left( \overline{\nabla^{\nu} \Sym^l E}\right) \in \Z[\zeta_p + \zeta_p^{-1}]$. To do this we introduce the following notation. For an integer $j$ write $g_j$ for $\z^{-j+1} + \z^{-j+3} + \dots + \z^{j-1} = \frac{\z^j - \z^{-j}}{\z - \z^{-1}}$. Thus $g_j\in \Z[\z + \z^{-1}]$ for all integers $j$, the sequence $g_j$ is periodic with period $p$ and from the definition of $\Theta$ we have the equality $g_j=\Theta(\U_{j-1})$ for $1\leq j\leq p-1$.  
	
	\begin{corollary}\label{Theta irreducibility cor}
		Let $\nu$ be a $p$-small partition and $0\leq l \leq p-2$.
		\begin{enumerate}[label=\textnormal{(\roman*)}]
			\item The modular plethysm $\nabla^{\nu} \Sym^l E$ is projective if and only if we have $\Theta(\overline{\nabla^{\nu} \Sym^l E})=0$.
			
			\item The modular plethysm $\nabla^{\nu} \Sym^l E$ is stably-irreducible if and only if $\Theta(\overline{\nabla^{\nu} \Sym^l E})$ equals $g_j$ for some $1\leq j\leq p-1$.
		\end{enumerate} 
	\end{corollary}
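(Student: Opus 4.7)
My plan is to derive both parts from the explicit description of $\ker\Theta$ (Proposition~\ref{Iso of R_I prop}(ii) and Example~\ref{Ideal example}) combined with the positivity and single-parity constraints on the indecomposable summands of $\nabla^{\nu}\Sym^l E$ provided by Lemma~\ref{initial modular plethysms lemma}. The opening move is to write $\overline{\nabla^{\nu}\Sym^l E}=\sum_{m=0}^{p-2}n_m\U_m$ with $n_m\in\Z_{\geq 0}$, using Lemma~\ref{initial modular plethysms lemma}(i), and to recall via Lemma~\ref{initial modular plethysms lemma}(ii) that all indices $m$ with $n_m>0$ share one parity.

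For (i), the forward implication is immediate since $\Theta$ is a ring homomorphism and $\Theta(0)=0$. For the converse I would expand $\overline{\nabla^{\nu}\Sym^l E}=\sum_{i=0}^{(p-3)/2}a_i(\U_{2i}+\U_{p-2-2i})$ using the free generating set of $\ker\Theta$ from Example~\ref{Ideal example}. Because $p$ is odd, $2i$ and $p-2-2i$ have opposite parities, so comparing coefficients forces $n_{2i}=a_i=n_{p-2-2i}\geq 0$. A strictly positive $a_i$ would place both $\Sym^{2i}E$ and $\Sym^{p-2-2i}E$ as summands of $\nabla^{\nu}\Sym^l E$, contradicting the single-parity condition. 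Hence all $a_i$ vanish and $\overline{\nabla^{\nu}\Sym^l E}=0$, i.e.\ the plethysm is projective.

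For (ii), stably-irreducibility is equivalent to $\overline{\nabla^{\nu}\Sym^l E}=\U_m$ for some $0\leq m\leq p-2$, which $\Theta$ sends to $g_{m+1}$ with $1\leq m+1\leq p-1$, and this already handles the forward direction. For the converse, given $\Theta(\overline{\nabla^{\nu}\Sym^l E})=g_j$ with $1\leq j\leq p-1$, I would observe that $(1)$ is a $p$-small partition and $0\leq j-1\leq p-2$, so $\nabla^{(1)}\Sym^{j-1}E=\Sym^{j-1}E$ and hence $\Theta(\overline{\nabla^{(1)}\Sym^{j-1}E})=\Theta(\U_{j-1})=g_j$. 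Lemma~\ref{invertibility of Theta lemma} then yields $\nabla^{\nu}\Sym^l E\piso\Sym^{j-1}E$, and since $\Sym^{j-1}E$ is irreducible and non-projective this gives stably-irreducibility. The only delicate step in the whole argument is the parity bookkeeping in (i), which is essentially a repetition of the manoeuvre already used to prove Lemma~\ref{invertibility of Theta lemma}, so I do not anticipate any serious obstacle.
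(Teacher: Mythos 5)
Your proof is correct, and for part (ii) it is exactly the paper's argument: apply Lemma~\ref{invertibility of Theta lemma} with $\mu=(1)$ and $j-1$ in place of $j$, then note $\nabla^{(1)}\Sym^{j-1}E=\Sym^{j-1}E$. For part (i) the paper takes a slightly slicker route to the same end: instead of re-expanding $\overline{\nabla^{\nu}\Sym^l E}$ in the free basis $\U_{2i}+\U_{p-2-2i}$ of $\ker\Theta$ and redoing the parity argument, it invokes Lemma~\ref{invertibility of Theta lemma} once more with $\mu=(1^2)$ and $j=0$, observing that $\nabla^{(1^2)}\Sym^0 E=\bigwedge^2 k$ is the zero module and hence has $\Theta$-image $0$. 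This keeps the parity bookkeeping confined to the proof of Lemma~\ref{invertibility of Theta lemma}, whereas you repeat that manoeuvre in situ (as you yourself note). Both proofs rest on the same two facts --- the explicit free generating set of $\ker\Theta$ from Example~\ref{Ideal example} and the positivity/parity constraints from Lemma~\ref{initial modular plethysms lemma} --- so this is essentially the same argument, with the paper packaging it marginally more economically.
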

	\begin{proof}
		The `only if' directions are clear in both parts.
		\begin{enumerate}[label=\textnormal{(\roman*)}]
			\item The `if' direction comes from Lemma~\ref{invertibility of Theta lemma} applied to $p$-small partitions $(1^2)$ and $\nu$ and integers $0$ and $l$. Since $\bigwedge^2 k$ is the zero module we have $\Theta(\overline{\nabla^{\nu} \Sym^l E}) = 0 = \Theta(\overline{\bigwedge^2 k})$ and therefore $\nabla^{\nu} \Sym^l E \piso 0$.
			
			\item This time we use Lemma~\ref{invertibility of Theta lemma} applied to $p$-small partitions $(1)$ and $\nu$ and integers $j-1$ and $l$. Indeed, we have $\Theta(\overline{\nabla^{\nu} \Sym^l E}) = g_j = \Theta(\overline{\Sym^{j-1} E})$, and therefore $\nabla^{\nu} \Sym^l E \piso \Sym^{j-1} E$.
			\qedhere 
		\end{enumerate} 
	\end{proof}
	
	To use Corollary~\ref{Theta irreducibility cor} we need to understand $\Theta\left( {\overline{\nabla^{\nu} \Sym^l E}}\right)$. This is essentially established in \cite[Theorem~2.9.1]{BensonBundles17} in the setting of the cyclic group $\Z/p\Z$ rather than $G$ and without working modulo projectives. However, the same strategy using $p$-$\lambda$-rings (which are referred to as special $p$-$\lambda$-rings in \cite{BensonBundles17}) applies in our setting.
	
	\begin{theorem}\label{modular plethysm formula thm}
		Let $\nu$ be a $p$-small partition and $0\leq l\leq p-2$. Then
		\[
		\Theta\left( {\overline{\nabla^{\nu} \Sym^l E}}\right) = s_{\nu}(\z^{-l}, \z^{-l+2},\dots, \z^{l}),
		\]
		where $s_{\nu}$ is the Schur function labelled by the partition $\nu$. 
	\end{theorem}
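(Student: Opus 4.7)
The plan is to exploit the $p$-$\lambda$-ring structures already developed. By Proposition~\ref{Iso of p-lambda-rings prop}, $\Theta\!:R_I\to \Z[\z+\z^{-1}]$ is a surjective homomorphism of $p$-$\lambda$-rings. For a $p$-small partition $\nu$ one has $\nu_1+\ell(\nu)-1 \leq |\nu| < p$, so the operation $\{\nu\}$ from \S\ref{p subsection} is well-defined on any $p$-$\lambda$-ring, in particular on $R_I$ and on $\Z[\z+\z^{-1}]$.

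First I would identify $\overline{\nabla^{\nu}\Sym^l E}$ inside $R_I$. By Example~\ref{p-lambda third example}, $R_I$ is a $p$-$\lambda$-subring of $\overline{R(G)}$, and on $\overline{R(G)}$ the operation $\{\nu\}$ coincides with the Schur functor $\nabla^{\nu}$ (as stated in the paragraph following Example~\ref{p-lambda third example}). Hence
\[
\overline{\nabla^{\nu} \Sym^l E} \;=\; \{\nu\}\,\U_l \;\in\; R_I.
\]

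Next I would push this through $\Theta$. The key observation is that $\{\nu\}$ is defined by the determinantal expression $\{\nu\}x = \det\bigl(\lambda^{\nu'_i+j-i}(x)\bigr)_{i,j\leq \nu_1}$, which is a polynomial in the $\lambda^m(x)$ with $m \leq \nu_1+\ell(\nu)-1 < p$. Because $\Theta$ is a ring homomorphism commuting with every $\lambda^m$ for $m<p$, it must commute with $\{\nu\}$ as well. Therefore
\[
\Theta\!\left(\overline{\nabla^{\nu}\Sym^l E}\right) \;=\; \Theta(\{\nu\}\U_l) \;=\; \{\nu\}\,\Theta(\U_l) \;=\; \{\nu\}\bigl(\z^{-l}+\z^{-l+2}+\dots+\z^l\bigr).
\]
By the final sentence of \S\ref{p subsection}, applying $\{\nu\}$ to a sum of powers of $\z$ in $\Z[\z+\z^{-1}]$ returns the Schur function $s_{\nu}$ evaluated at those powers, yielding the desired identity.

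The only point requiring care — which I would expect to be the single substantive step — is the compatibility of $\Theta$ with $\{\nu\}$. This is not a deep fact but it must be stated explicitly, since the definition of $\{\nu\}$ has the range condition $\nu_1+\ell(\nu)-1<p$; without $p$-smallness of $\nu$ we could not even apply $\{\nu\}$ inside the $p$-$\lambda$-ring $\Z[\z+\z^{-1}]$. Once this compatibility is recorded, the theorem follows by the short chain of equalities above, and no further computation is needed.
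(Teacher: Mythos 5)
Your proposal is correct and follows essentially the same route as the paper: invoke Proposition~\ref{Iso of p-lambda-rings prop} to see that $\Theta$ commutes with the operation $\{\nu\}$ (since $\{\nu\}$ is built from $\lambda^m$ with $m<p$), then apply $\{\nu\}$ to $\U_l$ and to $\Theta(\U_l)$ and read off the Schur function via the discussion at the end of \S\ref{p subsection}. The paper's proof is a compressed version of your chain of equalities, so there is nothing to add.
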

	
	\begin{proof}
		By Proposition~\ref{Iso of p-lambda-rings prop} we know that $\Theta$ is a homomorphism of $p$-$\lambda$-rings. Since the operation $\left\lbrace \nu \right\rbrace $ is defined using only the $p$-$\lambda$-ring structure, we have for all $x\in R_I$ that $\Theta( \left\lbrace \nu\right\rbrace x) = \left\lbrace \nu\right\rbrace  \Theta(x)$. Taking $x=\U_l = \overline{\Sym^l E}$ we obtain the result according to the discussion at the end of \S\ref{p subsection}.
	\end{proof}
	
	\subsection{Projective classifications}
	We can immediately rewrite Theorem~\ref{modular plethysm formula thm} as follows.
	
	\begin{corollary}\label{SHCF cor}
		Let $\nu$ be a $p$-small partition and $0\leq l\leq p-2$. Then we can write
		\[
		\Theta\left( {\overline{\nabla^{\nu} \Sym^l E}}\right) = \frac{\prod_{c\in \mathcal{C}_{l+1}} (\z^c - \z^{-c})}{\prod_{h\in \mathcal{H}} (\z^h - \z^{-h})}.
		\]
		Using the introduced notation $g_j = \frac{\z^j - \z^{-j}}{\z - \z^{-1}}$ we can write
		\[
		\Theta\left( {\overline{\nabla^{\nu} \Sym^l E}}\right) = \frac{\prod_{c\in \mathcal{C}_{l+1}} g_c}{\prod_{h\in \mathcal{H}} g_h}.
		\]
	\end{corollary}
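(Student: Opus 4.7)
The proof is essentially a direct combination of two results already established in the excerpt, so the plan is short.

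First, I apply Theorem~\ref{modular plethysm formula thm} to rewrite $\Theta\bigl(\overline{\nabla^{\nu}\Sym^l E}\bigr)$ as the Schur function $s_{\nu}(\z^{-l},\z^{-l+2},\dots,\z^{l})$. Next, I invoke Stanley's Hook Content Formula (Theorem~\ref{SHCF thm}) with $\lambda=\nu$ and specialise the variable $q$ to $\z$. This immediately yields the first displayed identity. For the second identity, I rewrite each factor in the numerator and denominator via $\z^j-\z^{-j}=(\z-\z^{-1})g_j$. Since the multisets $\mathcal{C}_{l+1}$ and $\mathcal{H}$ both have cardinality $|\nu|$ (each box of the Young diagram of $\nu$ contributes exactly one shifted content and one hook length), the $(\z-\z^{-1})^{|\nu|}$ factors in numerator and denominator cancel exactly.

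The only step that requires minor justification is the legitimacy of specialising $q\mapsto\z$ in Stanley's formula. Stanley's formula is an identity of rational functions in $q$, and the specialisation is valid provided every factor $\z^h-\z^{-h}$ in the denominator is nonzero. Since $\nu$ is $p$-small, every hook length $h\in\mathcal{H}$ satisfies $1\le h\le|\nu|-1<p-1$, in particular $p\nmid h$, so $\z^h\neq\z^{-h}$ and the denominator on the right-hand side is a nonzero element of $\Z[\z+\z^{-1}]$. Equivalently, each $g_h$ is a unit of $\Z[\z+\z^{-1}]$ (a cyclotomic unit, as the author notes will be exploited later), so the division makes sense inside $\Z[\z+\z^{-1}]$.

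I do not anticipate any genuine obstacle here; the content of the corollary is entirely in the two inputs (Theorem~\ref{modular plethysm formula thm} and Theorem~\ref{SHCF thm}), and this corollary is simply packaging their combination in a form suitable for the subsequent applications (most notably, pairing with Corollary~\ref{Theta irreducibility cor} to translate projectivity and stable-irreducibility of $\nabla^{\nu}\Sym^l E$ into statements about multisets of hook lengths and shifted contents).
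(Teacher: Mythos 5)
Your proof matches the paper's: both combine Theorem~\ref{modular plethysm formula thm} with Stanley's Hook Content Formula (Theorem~\ref{SHCF thm}) specialised at $q=\z$, then cancel $(\z-\z^{-1})^{|\nu|}$ from numerator and denominator to pass to the $g_j$ form, and your extra remark on the legitimacy of the specialisation (the denominator factors are units since no hook length is divisible by $p$) is a sound addition the paper leaves implicit. One tiny slip: the maximal hook length of $\nu$ is $\nu_1+\ell(\nu)-1\leq|\nu|$, not $|\nu|-1$, but since $|\nu|<p$ the conclusion $p\nmid h$ is unaffected.
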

	
	\begin{proof}
		The first equality is a combination of Theorem~\ref{modular plethysm formula thm} and Theorem~\ref{SHCF thm} with a specialisation given by $q=\z$. The second equality follows from the first one after dividing the numerator and the denominator by $(\z+\z^{-1})^{|\nu|}$.
	\end{proof}
	
	To demonstrate the strength of Corollary~\ref{SHCF cor} we classify the projective modular plethysms, establishing our first main result Theorem~\ref{projective classification thm}.
	
	\begin{proof}[Proof of Theorem~\ref{projective classification thm}]
		The statement is trivially true if $\nu$ is the empty partition. Also, the statement holds if $\ell(\nu)\geq l+2$ as then $\nabla^{\nu} \Sym^l E=0$ since the dimension of $\Sym^l E$ is $l+1$. Now suppose that $\nu\neq \o$ and $\ell(\nu)\leq l+1$.
		
		Combining Corollary~\ref{Theta irreducibility cor}(i) and Corollary~\ref{SHCF cor}, the modular plethysm $\nabla^{\nu} \Sym^l E$ is projective if and only if
		\begin{equation}\label{projective equation}
		\frac{\prod_{c\in \mathcal{C}_{l+1}} g_c}{\prod_{h\in \mathcal{H}} g_h} =0.
		\end{equation}
		Recall that $g_j=\frac{\z^j - \z^{-j}}{\z-\z^{-1}}$, and thus $g_j=0$ if and only if $p$ divides $j$. Since $\nu$ is $p$-small, all the elements of $\mathcal{H}$ lie between $1$ and $p-1$, and hence the denominator in (\ref{projective equation}) is non-zero.
		
		The numerator in (\ref{projective equation}) is zero precisely when there is $c\in \mathcal{C}_{l+1}$ such that $p\mid c$. Using Lemma~\ref{shifted content lemma}(iii), this is equivalent to the existence of $c$ divisible by $p$ in the range $l+2-\ell(\nu)\leq c\leq \nu_1 +l$. Observe that $1\leq l+2-\ell(\nu)\leq p$ by our assumptions. Thus if there is such $c$, it can be chosen to be $p$. And $p$ lies in this interval if and only if $p\leq \nu_1 +l$, which finishes the proof.    
	\end{proof}
	
	We can immediately prove the third main result as well.
	
	\begin{proof}[Proof of Theorem~\ref{final projective classification thm}]
		Using Corollary~\ref{kG-Omega cor}(ii) we can write any indecomposable $kG$-module, up to isomorphism, as $\Omega^i (\Sym^l E)$ for some $0\leq i\leq p-2$ and $0\leq l\leq p-2$. Applying Corollary~\ref{Heller endo cor} $i$ times we can write $\nabla^{\nu}\left( \Omega^i (\Sym^l E) \right) \piso \Omega^{i|\nu|} \left( \nabla^{\lambda} \Sym^l E \right) $ where $\lambda$ is defined as in the statement of the theorem. Hence, using Proposition~\ref{Heller prop}(i), $\nabla^{\nu}\left( \Omega^i (\Sym^l E) \right)$ is projective if and only if $\nabla^{\lambda} \Sym^l E$ is projective. The result therefore follows from Theorem~\ref{projective classification thm}.
	\end{proof}
	
	To obtain the classification of the stably-irreducible modular plethysms we proceed similarly as in the projective case. In particular, we change the problem into a question about the multisets $\mathcal{C}_{l+1}$ and $\mathcal{H}$. However, before doing so we introduce a few reduction results, which simplify the argument.
	
	\begin{lemma}\label{reduction lemma}
		Let $\nu$ be a $p$-small partition and $0\leq l\leq p-2$.
		\begin{enumerate}[label=\textnormal{(\roman*)}]
			\item If $\ell(\nu) = l+1$ and $\mu$ is a partition obtained from $\nu$ by removing the first column (that is decreasing the first $l+1$ parts of $\nu$ by $1$), then $\nabla^{\nu} \Sym^l E \piso \nabla^{\mu} \Sym^l E$.
			
			\item If $\nu_1 = p-l-1$ and $\mu$ is a partition obtained from $\nu$ by removing the first row (that is $\mu = (\nu_2, \nu_3,\dots)$), then either $\nabla^{\nu} \Sym^l E \piso \nabla^{\mu} \Sym^l E$ or $\nabla^{\nu} \Sym^l E \piso \Sym^{p-2} E \otimes \nabla^{\mu} \Sym^l E$.
			
			\item In the setting of (ii) the modular plethysm $\nabla^{\nu} \Sym^l E$ is stably-irreducible if and only if $\nabla^{\mu} \Sym^l E$ is stably-irreducible.  
		\end{enumerate}
	\end{lemma}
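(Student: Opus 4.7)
The plan is to deduce all three parts from Theorem~\ref{modular plethysm formula thm} via classical Schur function identities. For (i) I would appeal to the alternant identity
\[
s_\lambda(x_1,\dots,x_n) \;=\; x_1 x_2 \cdots x_n \cdot s_{\tilde\mu}(x_1,\dots,x_n),
\]
valid whenever $\ell(\lambda) = n$, where $\tilde\mu$ denotes $\lambda$ with its first column removed. This follows immediately by extracting one factor of $x_i$ from row $i$ of the alternant determinant $\det(x_i^{\lambda_j + n - j})$. Taking $n = l+1$, $\lambda = \nu$ and specialising to $(x_1,\dots,x_{l+1}) = (\z^{-l},\z^{-l+2},\dots,\z^l)$, the product $x_1\cdots x_{l+1}$ collapses to $\z^{0} = 1$, so $s_\nu$ and $s_\mu$ agree at this point. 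By Theorem~\ref{modular plethysm formula thm} we obtain $\Theta(\overline{\nabla^\nu \Sym^l E}) = \Theta(\overline{\nabla^\mu \Sym^l E})$, and Lemma~\ref{invertibility of Theta lemma} upgrades this to the required $\piso$ identity.

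For (ii) I would pass to the conjugate partition. Since $\ell(\nu') = \nu_1 = p-l-1 = \dim \Sym^{p-l-2} E$, applying (i) to $\nu'$ with $l$ replaced by $p-l-2$ yields
\[
\nabla^{\nu'} \Sym^{p-l-2} E \piso \nabla^{\mu'} \Sym^{p-l-2} E,
\]
using that removing the first column of $\nu'$ produces the conjugate of $\mu = (\nu_2, \nu_3, \dots)$. Next I would apply Corollary~\ref{p-2 endo cor} on both sides to rewrite each $\nabla^{\lambda'} \Sym^{p-l-2} E$ back as $\nabla^\lambda \Sym^l E$, possibly multiplied by $\Sym^{p-2} E$ according to the parity of $|\lambda|$. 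Since $|\nu| - |\mu| = p-l-1$, the parities of $|\nu|$ and $|\mu|$ agree precisely when $l$ is even; using $(\Sym^{p-2} E)^{\otimes 2} \piso k$ (Example~\ref{endo examples}), any common factor of $\Sym^{p-2} E$ on both sides can be cancelled. The net outcome is $\nabla^\nu \Sym^l E \piso \nabla^\mu \Sym^l E$ when the two parities coincide and $\nabla^\nu \Sym^l E \piso \Sym^{p-2} E \otimes \nabla^\mu \Sym^l E$ otherwise, which is precisely the dichotomy in (ii).

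For (iii), Corollary~\ref{CG rule cor}(ii) gives $\U_{p-2} \cdot \U_j = \U_{p-2-j}$, so tensoring with $\Sym^{p-2} E$ is a self-inverse involution on the isomorphism classes of non-projective irreducible $kG$-modules. Hence a $kG$-module is stably-irreducible if and only if its tensor product with $\Sym^{p-2} E$ is stably-irreducible, and combining this observation with the two alternatives of (ii) gives the desired equivalence.

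The main obstacle I anticipate is the parity bookkeeping in (ii): one must carefully check that each of the four possibilities for the parities of $|\nu|$ and $|\mu|$ produces exactly one of the two alternatives allowed by the statement, and verify that the cancellations based on $(\Sym^{p-2} E)^{\otimes 2} \piso k$ are applied correctly on both sides of the equation coming from part (i).
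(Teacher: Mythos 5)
Your proposal is correct and follows essentially the same route as the paper: part (i) via the Schur-function identity $s_\lambda = x_1\cdots x_n\, s_{\tilde\mu}$ (the paper derives it from the tableau definition; you use the alternant determinant, which is an equally clean way to get the same fact), part (ii) by conjugating and combining part (i) with Corollary~\ref{p-2 endo cor}, and part (iii) from the fact that tensoring with the endotrivial module $\Sym^{p-2}E$ preserves stable-irreducibility (the paper invokes Lemma~\ref{endo lemma} in general; you argue it directly from Corollary~\ref{CG rule cor}(ii), which works equally well since stably-irreducible is equivalent to being $\piso$ to some $\Sym^j E$ with $0\le j\le p-2$).
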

	
	In our proof of Lemma~\ref{reduction lemma} we use the following combinatorial lemma. It is a straightforward application of the combinatorial definition of Schur functions \cite[Definition~7.10.1]{StanleyEnumerativeII99}. 
	
	\begin{lemma}\label{combinatorial lemma}
		Let $\lambda$ be a partition with $m=\ell(\lambda)$. If $\mu$ is a partition obtained from $\lambda$ by removing the first column, then $s_{\lambda}(\List{x}{m}) = x_1 x_2\dots x_m s_{\mu}(\List{x}{m})$.
	\end{lemma}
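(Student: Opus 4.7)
The plan is to apply the combinatorial definition of Schur functions directly, namely $s_\lambda(x_1,\dots,x_m) = \sum_T x^T$, where the sum runs over semistandard Young tableaux $T$ of shape $\lambda$ with entries in $\{1,2,\dots,m\}$ and $x^T = \prod_{i=1}^m x_i^{n_i(T)}$ with $n_i(T)$ the number of $i$'s in $T$. The same formula applies to $s_\mu$. I will prove the identity by establishing a weight-preserving (up to the factor $x_1 x_2 \dots x_m$) bijection between the two sets of tableaux.

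The key observation is this: if $T$ is an SSYT of shape $\lambda$ with entries in $\{1,\dots,m\}$, then since $\lambda$ has exactly $m$ rows, the first column of $T$ consists of $m$ entries which are strictly increasing with values in $\{1,\dots,m\}$. These entries must therefore be $1,2,\dots,m$ from top to bottom. Hence every SSYT of $\lambda$ has the same forced first column.

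The bijection then sends $T$ to the tableau $T'$ of shape $\mu$ obtained by deleting this first column; the inverse prepends the column $(1,2,\dots,m)^{\top}$ to any SSYT $T'$ of shape $\mu$ with entries in $\{1,\dots,m\}$. Given this bijection, the weight statement is immediate: $x^T = x_1 x_2 \dots x_m \cdot x^{T'}$ because the deleted column contributes exactly one instance of each variable. Summing over all tableaux yields the desired identity.

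The only non-routine step is checking that the inverse is well-defined, i.e.\ that prepending $(1,2,\dots,m)^{\top}$ to an SSYT $T'$ of shape $\mu$ produces an SSYT of shape $\lambda$. Strict increase in columns is trivial, and the only content in the row condition is that the entry $T'(i,1)$ (when row $i$ of $\mu$ is non-empty) satisfies $T'(i,1) \geq i$, which holds because the first column of $T'$ is strictly increasing with positive entries. I expect this verification to be the main (and essentially the only) technical point, but it is a standard and short check. Once it is in place, the lemma follows.
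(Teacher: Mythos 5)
Your proof is correct and takes essentially the same approach as the paper: both use the combinatorial definition of Schur functions, observe that the first column of any admissible tableau is forced to be $1,2,\dots,m$, and conclude via the resulting weight-shifting bijection given by deleting that column. The only difference is that you spell out the (routine) verification that the inverse map produces a valid semistandard tableau, which the paper leaves implicit.
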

	
	\begin{proof}
		The combinatorial definition of Schur functions is $s_{\lambda}(\List{x}{m}) = \sum_{T} x^T$, where $T$ runs over the set $S(\lambda)_{m}$ of standard tableaux of shape $\lambda$ with entries from $\left\lbrace 1,2,\dots, m\right\rbrace $.
		
		Since $\ell(\lambda)=m$, any standard tableau $T\in S(\lambda)_m$ has the first column filled with $1,2,\dots, m$ from top to bottom. Therefore there is a bijection $r$ between $S(\lambda)_{m}$ and $S(\mu)_{m}$ given by removing the first column with an inverse given by adding a new first column filled with $1,2,\dots,m$ from top to bottom. Moreover, $x^T = x_1x_2\dots x_{m} x^{r(T)}$ for all $T\in S(\lambda)_m$ which in turn gives the required equality $s_{\lambda}(\List{x}{m}) = x_1x_2\dots x_{m} s_{\mu}(\List{x}{m})$.
	\end{proof}
	
	\begin{proof}[Proof of Lemma~\ref{reduction lemma}]\leavevmode
		\begin{enumerate}[label=\textnormal{(\roman*)}]
			\item Using Lemma~\ref{invertibility of Theta lemma}, we need to show $\Theta(\overline{\nabla^{\nu} \Sym^l E}) = \Theta(\overline{\nabla^{\mu} \Sym^l E})$. Using Theorem~\ref{modular plethysm formula thm} this equality becomes $s_{\nu}(\z^{-l}, \z^{-l+2}, \dots, \z^{l}) = s_{\mu}(\z^{-l}, \z^{-l+2}, \dots, \z^{l})$.
			
			Applying Lemma~\ref{combinatorial lemma} with $\lambda = \nu$ and $m=l+1$ we get that $s_{\nu}(x_1, x_2, \dots, x_{l+1})$ and $x_1 x_2\dots x_{l+1} s_{\mu}(x_1, x_2, \dots, x_{l+1})$ equal. But this becomes the desired equality after specialising $\List{x}{l+1}$ to $\z^{-l},  \z^{-l+2}, \dots, \z^{l}$ since their product then equals $1$.
			
			\item Note that $\nu'$ and $\mu'$ satisfy the constraints in (i) with $l$ replaced by $p-l-2$, thus $\nabla^{\nu'} \Sym^{p-l-2} E \piso \nabla^{\mu'} \Sym^{p-2-l} E$. Now, applying Corollary~\ref{p-2 endo cor} to both sides, we obtain $(\Sym^{p-2} E \otimes) \nabla^{\nu} \Sym^l E \piso (\Sym^{p-2} E \otimes) \nabla^{\mu} \Sym^l E$, where $\Sym^{p-2} E \otimes$ in brackets denotes that $\Sym^{p-2} E \otimes$ may or may not appear (depending on the parities of $|\mu|$ and $|\nu|$). Now, we are done since $\Sym^{p-2} E \otimes \Sym^{p-2} E \piso k$, and hence the module $\Sym^{p-2} E$ can be moved to the right-hand side if necessary.
			
			\item By Lemma~\ref{endo lemma} and Example~\ref{endo examples}, tensoring with $\Sym^{p-2} E$ preserves the number of non-projective indecomposable summands. Thus by (ii), the modular plethysm $\nabla^{\nu} \Sym^l E$ has precisely one non-projective indecomposable summand if and only if $\nabla^{\mu} \Sym^l E$ does. Hence, using Lemma~\ref{initial modular plethysms lemma}(i), one is stably-irreducible if and only if the other is. 	
			\qedhere    
		\end{enumerate}
	\end{proof}

\begin{remark}\label{involution remark}
	An argument similar to the one given in (iii) combined with Corollary~\ref{p-2 endo cor} shows that for any $p$-small partition $\nu$ and $0\leq l\leq p-2$ the modular plethysm $\nabla^{\nu} \Sym^l E$ is stably-irreducible (or projective) if and only if $\nabla^{\nu'} \Sym^{p-l-2} E$ is stably-irreducible (or projective). This agrees with Theorem~\ref{projective classification thm} and Theorem~\ref{full irred modular plethysms of E thm}.
\end{remark}
	
	Observe that we do not need to consider all $p$-small partitions in our search for all the stably-irreducible modular plethysms. Indeed, according to Theorem~\ref{projective classification thm}, we can rule out the partitions with more than $l+1$ parts and the partitions with the first part of size larger than $p-l-1$. Moreover, according to Lemma~\ref{reduction lemma}(i)~and~(iii), we can also ignore the cases when $l(\nu) =l+1$ or $\nu_1 = p-l-1$. This motivates the following definition.
	
	\begin{definition}
		We say that a partition $\nu$ is \textit{$(p,l)$-small} if the following conditions hold:
		\begin{enumerate}[label=\textnormal{(\roman*)}]
			\item $\nu$ is $p$-small,
			\item $\nu_1 \leq p-l-2$,
			\item $\ell(\nu)\leq l$.
		\end{enumerate}
	\end{definition}
	
	\subsection{Cyclotomic units and multisets}
	For $1\leq j\leq p-1$ the element $g_j = \frac{\z^j - \z^{-j}}{\z - \z^{-1}} = \Theta(\U_{j-1})$ is a cyclotomic unit in $\Z[\z + \z^{-1}]$ (see \cite[\S8.1]{WashingtonCyclotomic97}). By Proposition~\ref{Iso of R_I prop}(ii), the element $\U_{j-1}\cdot(\U_0 + \U_{p-2}) = \U_{j-1} + \U_{p-j-1}$ lies in the kernel of $\Theta$. Hence we obtain the identities $g_j = -g_{p-j}$ for $1\leq j\leq p-1$. Let $G_C$ be the subgroup of the group of cyclotomic units generated by $g_j$ with $1\leq j\leq p-1$. From the preceding discussion we can already conclude that $G_C$ is generated by $-1=g_{p-1}$ and $g_j$ with $2\leq j \leq (p-1)/2$. In fact, a stronger statement fully describing $G_C$ is true.
	
	\begin{theorem}\label{alg independence thm}
		The torsion-free part of $G_C$ is freely generated by $g_j$ with $2\leq j\leq (p-1)/2$ and the torsion part is a group of order two generated by $-1$.
	\end{theorem}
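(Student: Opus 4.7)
My plan is to verify the torsion statement first and then establish the torsion-free rank by connecting $G_C$ with the classical group of cyclotomic units. First I would observe that $g_{p-1} = (\z^{-1} - \z)/(\z - \z^{-1}) = -1$ lies in $G_C$, and that the ring $\Z[\z + \z^{-1}]$, being the ring of integers of the totally real field $\mathbb{Q}(\z + \z^{-1})$, admits only $\pm 1$ as roots of unity. Hence the torsion subgroup of $G_C$ is exactly $\{\pm 1\}$. Combined with the identities $g_j = -g_{p-j}$ already derived in the excerpt and the trivial case $g_1 = 1$, this gives that $G_C$ is generated by $-1$ together with $g_2, g_3, \dots, g_{(p-1)/2}$, so the torsion-free rank of $G_C$ is at most $(p-3)/2$.

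The crux of the argument is the matching lower bound, namely multiplicative independence of $g_2, \dots, g_{(p-1)/2}$ modulo $\pm 1$. Here I would exploit the fact that our units are Galois conjugates of Washington's standard cyclotomic units of $\mathbb{Q}(\z)^+$. A direct rearrangement yields
\[
g_j = \z^{1-j} \cdot \frac{1 - \z^{2j}}{1 - \z^2} = \sigma_2(\xi_j),
\]
where $\xi_j = \z^{(1-j)/2}(1 - \z^j)/(1 - \z)$ (with the exponent $(1-j)/2$ interpreted in $\Z/p\Z$) and $\sigma_2$ is the Galois automorphism $\z \mapsto \z^2$ of $\mathbb{Q}(\z)$, which restricts to an automorphism of $\mathbb{Q}(\z + \z^{-1})$. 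Hence $G_C = \sigma_2(G_C')$, where $G_C'$ is the subgroup of $\Z[\z + \z^{-1}]^{\times}$ generated by $-1$ and the $\xi_j$'s with $2 \leq j \leq (p-1)/2$.

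By the classical theory of cyclotomic units of $\mathbb{Q}(\z)^+$ (see, for instance, \cite[Lemma~8.1 and Theorem~8.2]{WashingtonCyclotomic97}), the group $G_C'$ coincides with the full group $C^+$ of cyclotomic units of $\mathbb{Q}(\z)^+$, and $C^+$ has finite index (equal to the plus class number) in the full unit group $\Z[\z + \z^{-1}]^{\times}$. By Dirichlet's unit theorem the latter has torsion-free rank $(p-3)/2$, hence so do $G_C'$ and its $\sigma_2$-image $G_C$. Matched with the upper bound of the first paragraph, this forces $g_2, \dots, g_{(p-1)/2}$ to be a $\Z$-basis of the torsion-free part of $G_C$, as required.

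The main obstacle is precisely the lower bound: it cannot be extracted from polynomial-level manipulations inside $\Z[\z + \z^{-1}]$ and genuinely relies on the nontrivial classical fact that the cyclotomic units form a finite-index subgroup of the full unit group of a real cyclotomic field. A more elementary, though more laborious, alternative is to compute the logarithmic regulator of $g_2, \dots, g_{(p-1)/2}$ directly along the real embeddings $\z \mapsto 2\cos(2\pi a/p)$ and verify that the resulting $(p-3)/2 \times (p-3)/2$ matrix, with entries essentially of the form $\log|\sin(\pi a j/p)| - \log|\sin(\pi a/p)|$, is non-singular.
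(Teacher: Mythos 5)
Your proof is correct and follows essentially the same approach as the paper: reduce to Washington's description of the cyclotomic units of $\mathbb{Q}(\zeta_p)^+$ via the identification $g_j = \sigma_2(\xi_j)$. The paper simply cites \cite[Theorem~8.3]{WashingtonCyclotomic97} applied to the primitive root $\zeta_p^2$, whereas you unpack the same content via Lemma~8.1, Theorem~8.2, Dirichlet's unit theorem, and the observation that a totally real field contains only $\pm 1$ as roots of unity.
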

	
	\begin{proof}
		See \cite[Theorem~8.3]{WashingtonCyclotomic97} applied to $n=p$ and the $p$th primitive root of unity $\z^2$.
	\end{proof}
	
	Let us now establish our terminology for working with multisets. Fix a finite multiset $\mathcal{M}$ with integral elements and an integer $x$. We write $m_{\mathcal{M}}(x)$ for the multiplicity of $x$ in $\mathcal{M}$ and $|\mathcal{M}|$ for the size of $\mathcal{M}$, that is the sum of multiplicities of all elements in $\mathcal{M}$.
	
	We use the notation $\mathcal{M}\cup \left\lbrace  x\right\rbrace  $ for the multiset where each integer but $x$ has the same multiplicity as in $\mathcal{M}$ and the multiplicity of $x$ is $m_{\mathcal{M}}(x)+1$. In other words, $\mathcal{M}\cup \left\lbrace  x\right\rbrace $ is obtained from $\mathcal{M}$ by adding $x$.
	
	In the case that $x$ lies in $\mathcal{M}$, we denote by $\mathcal{M}\setminus \left\lbrace  x\right\rbrace  $ the multiset obtained from $\mathcal{M}$ by removing $x$ (thus the multiplicity of $x$ is $m_{\mathcal{M}}(x)-1$).
	
	Finally, we use $\mathcal{M}\setminus \left\lbrace  x\right\rbrace ^* $ for the multiset obtained from $\mathcal{M}$ by removing all occurrences of $x$ (thus $m_{\mathcal{M}\setminus \left\lbrace  x\right\rbrace ^*}(x)=0$). This multiset has size equal to $|\mathcal{M}|-m_{\mathcal{M}}(x)$. 
	
	We introduce the following notation to take care of the relations $g_j = -g_{p-j}$ in $G_C$.
	
	\begin{definition}
		Let $\mathcal{M}$ be a multiset with elements from $\left\lbrace 1,\dots, p-1\right\rbrace $. We denote by $\mathcal{M}^F$ the multiset obtained from $\mathcal{M}$ by replacing all occurrences of $i$ by $p-i$ for all $i\geq (p+1)/2$. Hence all the elements of $\mathcal{M}^F$ lie between $1$ and $(p-1)/2$ and for each $i$ in this range $m_{\mathcal{M}^F}(i) = m_{\mathcal{M}}(i) + m_{\mathcal{M}}(p-i)$. We refer to $\mathcal{M}^F$ as the \textit{fold of $\mathcal{M}$}. 
	\end{definition}
	
	The restriction to $(p,l)$-small partitions allows us to establish the following result.
	
	\begin{lemma}\label{C does not have 1 lemma}
		Let $0\leq l\leq p-2$ and let $\nu$ be a non-empty $(p,l)$-small partition.
		\begin{enumerate}[label=\textnormal{(\roman*)}]
			\item The multisets $\mathcal{H}^F$ and $\mathcal{C}_{l+1}^F$ are well-defined (meaning that $\mathcal{H}$ and $\mathcal{C}_{l+1}$ contain elements between $1$ and $p-1$ only).
			\item $\mathcal{C}_{l+1}^F$ does not contain $1$.
			\item Suppose that $p\neq 3$. Let $t\in\left\lbrace 0,1,2 \right\rbrace $ denote the number of equalities among $\ell(\nu) = l$ and $\nu_1 = p-l-2$ which hold. Then $m_{\mathcal{C}_{l+1}^F}(2)=t$.  
		\end{enumerate}
	\end{lemma}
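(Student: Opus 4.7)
The plan is to derive all three parts as essentially direct consequences of Lemma~\ref{shifted content lemma} combined with the definition of $(p,l)$-smallness. The common thread is that the bounds $\nu_1\leq p-l-2$ and $\ell(\nu)\leq l$ translate directly into tight bounds on the extremal elements of $\mathcal{C}_{l+1}$, while $|\nu|<p$ bounds the hook lengths. In fact, the two conditions in (iii) are exactly the equality cases of these two bounds.

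For part (i), I would first note that every hook length satisfies $1\leq h_{i,j}\leq h_{1,1}=\nu_1+\ell(\nu)-1\leq|\nu|<p$, so $\mathcal{H}\subseteq\{1,\dots,p-1\}$ and $\mathcal{H}^F$ is well-defined. For the shifted contents, Lemma~\ref{shifted content lemma}(i)--(ii) identify the maximum and minimum of $\mathcal{C}_{l+1}$ as $\nu_1+l$ and $l+2-\ell(\nu)$ respectively. Under the $(p,l)$-small hypothesis these give $\nu_1+l\leq p-2$ and $l+2-\ell(\nu)\geq 2$, so $\mathcal{C}_{l+1}\subseteq\{2,3,\dots,p-2\}$ and hence $\mathcal{C}_{l+1}^F$ is well-defined. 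Part (ii) is then immediate from the same inclusion $\mathcal{C}_{l+1}\subseteq\{2,\dots,p-2\}$: neither $1$ nor $p-1$ appears in $\mathcal{C}_{l+1}$, so $1$ does not appear in the fold $\mathcal{C}_{l+1}^F$.

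For part (iii), the hypothesis $p\neq 3$ together with $p$ being an odd prime gives $p\geq 5$, so $2<(p+1)/2$ and $p-2$ folds to $2$. Hence
\[
m_{\mathcal{C}_{l+1}^F}(2)=m_{\mathcal{C}_{l+1}}(2)+m_{\mathcal{C}_{l+1}}(p-2).
\]
By Lemma~\ref{shifted content lemma}(iii), $2\in\mathcal{C}_{l+1}$ iff $l+2-\ell(\nu)\leq 2\leq \nu_1+l$; combined with $\ell(\nu)\leq l$ the lower inequality forces $\ell(\nu)=l$, and the upper is automatic since $\nu$ is non-empty (which implies $l\geq 1$ when $\ell(\nu)=l$, hence $\nu_1+l\geq 2$). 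When this happens, $2$ is the minimum of $\mathcal{C}_{l+1}$ and so has multiplicity exactly $1$ by Lemma~\ref{shifted content lemma}(ii). Symmetrically, $p-2\in\mathcal{C}_{l+1}$ iff $\nu_1=p-l-2$, in which case $p-2$ is the maximum of $\mathcal{C}_{l+1}$ and has multiplicity $1$ by Lemma~\ref{shifted content lemma}(i). Adding these two indicator contributions yields $t$.

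I do not anticipate any serious obstacle: the proof is really just a careful translation between partition-theoretic constraints and multiset bounds, using Lemma~\ref{shifted content lemma} as a black box. The only subtle point is verifying that the complementary inequalities in Lemma~\ref{shifted content lemma}(iii) (namely $2\leq\nu_1+l$ and $l+2-\ell(\nu)\leq p-2$) are automatic under the non-emptiness of $\nu$ and the assumption $p\geq 5$, which is a short case-check.
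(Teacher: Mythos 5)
Your proof is correct and follows essentially the same route as the paper: bound the hook lengths by $|\nu|<p$, use Lemma~\ref{shifted content lemma}(i)--(ii) together with the $(p,l)$-small constraints to pin the extremal shifted contents to $2$ and $p-2$, and then compute $m_{\mathcal{C}_{l+1}^F}(2)$ as the sum $m_{\mathcal{C}_{l+1}}(2)+m_{\mathcal{C}_{l+1}}(p-2)$. The only difference is that you spell out the small side checks (e.g.\ that $2\leq\nu_1+l$ is automatic when $\ell(\nu)=l$ and $\nu\neq\o$) which the paper leaves implicit; this is a mild elaboration rather than a different argument.
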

	
	\begin{proof}
		As $\nu$ is $p$-small, the hook lengths of $\nu$ lie between $1$ and $p-1$, establishing the statement about $\mathcal{H}$. By Lemma~\ref{shifted content lemma}(i)~and~(ii), the maximal element of $\mathcal{C}_{l+1}$ is $\nu_1+l\leq p-2$ and the minimal element of $\mathcal{C}_{l+1}$ is $l+2-\ell(\nu)\geq 2$ and both of them have multiplicity one in $\mathcal{C}_{l+1}$. (i) and (ii) follow immediately.
		
		For $p\geq 5$ we have $2< p-2$ and $m_{\mathcal{C}_{l+1}^F}(2) = m_{\mathcal{C}_{l+1}}(2) + m_{\mathcal{C}_{l+1}}(p-2)$. From the above inequalities regarding the maximal and the minimal element of $\mathcal{C}_{l+1}$ we obtain $m_{\mathcal{C}_{l+1}}(2)\leq 1$ with equality if and only if $\ell(\nu) =l$ and $m_{\mathcal{C}_{l+1}}(p-2)\leq 1$ with equality if and only if $\nu_1 = p-l-2$. This establishes (iii).
	\end{proof}
	
	\begin{remark}\label{p=3 remark}
		Even if $p=3$, (iii) remains true. This is because for $p=3$ and $0\leq l\leq 1$ there is no non-empty $(p,l)$-small partition.

		One can also allow $\nu=\o$ in (i) and (ii) since then the involved multisets are empty. 
	\end{remark}
	
	We are ready to prove the following key result. Note that if $\mathcal{M}$ and $\mathcal{N}$ are two finite multisets of integers, we interpret the statement $\mathcal{M} = \mathcal{N}\setminus\left\lbrace  1\right\rbrace  $ as $1$ lies in $\mathcal{N}$ and $\mathcal{M}$ equals $\mathcal{N}\setminus\left\lbrace  1\right\rbrace $.
	
	\begin{proposition}\label{multiset prop}
		Let $0\leq l\leq p-2$ and let $\nu$ be a $(p,l)$-small partition. Then $\nabla^{\nu} \Sym^l E$ is stably-irreducible if and only if there is an integer $i$ such that $\mathcal{C}_{l+1}^F = \left( \mathcal{H}^F\cup \left\lbrace  i\right\rbrace \right)\setminus \left\lbrace  1\right\rbrace  $.
	\end{proposition}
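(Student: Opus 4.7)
The plan is to unpack the stable-irreducibility condition through a sequence of equivalences ending in the cyclotomic integer ring $\Z[\z + \z^{-1}]$, where Theorem~\ref{alg independence thm} performs the combinatorial extraction. By Corollary~\ref{Theta irreducibility cor}(ii), stable-irreducibility of $\nabla^{\nu} \Sym^l E$ is equivalent to $\Theta(\overline{\nabla^{\nu}\Sym^l E}) = g_j$ for some $1 \leq j \leq p-1$. Corollary~\ref{SHCF cor} rewrites the left-hand side as $\prod_{c\in\mathcal{C}_{l+1}} g_c / \prod_{h\in\mathcal{H}} g_h$, and this ratio is well-defined since every $h\in\mathcal{H}$ lies in $\{1, \ldots, p-1\}$ by Lemma~\ref{C does not have 1 lemma}(i).

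First I would fold: using the identities $g_c = -g_{p-c}$ and $g_1 = 1$, the equation translates into $\varepsilon \prod_{c\in\mathcal{C}_{l+1}^F} g_c / \prod_{h\in\mathcal{H}^F} g_h = g_{j'}$ for some sign $\varepsilon \in \{\pm 1\}$ and some $1 \leq j' \leq (p-1)/2$ (absorbing the ambiguity $g_j = -g_{p-j}$ to bring the index into the fold range). Cancelling the $g_1 = 1$ factors from the denominator reduces this to an identity among the free generators $g_2, \ldots, g_{(p-1)/2}$ of the torsion-free part of $G_C$. Theorem~\ref{alg independence thm} then applies: comparing multiplicities of each $g_i$ forces either $\mathcal{C}_{l+1}^F = \mathcal{H}^F \setminus \{1\}^*$ (case $j' = 1$) or $\mathcal{C}_{l+1}^F = (\mathcal{H}^F \setminus \{1\}^*) \cup \{i\}$ for some $i\in\{2,\ldots,(p-1)/2\}$ (case $j' = i$).

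The final step exploits the size identity $|\mathcal{C}_{l+1}^F| = |\mathcal{H}^F| = |\nu|$ together with the fact, from Lemma~\ref{C does not have 1 lemma}(ii), that $\mathcal{C}_{l+1}^F$ contains no $1$. Counting cardinalities in each case forces $m_{\mathcal{H}^F}(1) = 0$ in the first and $m_{\mathcal{H}^F}(1) = 1$ in the second, and both cases then repackage uniformly as $\mathcal{C}_{l+1}^F = (\mathcal{H}^F \cup \{i\}) \setminus \{1\}$ for some integer $i$ (with $i = 1$ used in the first case). The main subtlety to watch is the sign $\varepsilon$ appearing during the fold, but because Corollary~\ref{Theta irreducibility cor}(ii) permits $g_j$ for the full range $1 \leq j \leq p-1$, every sign is already absorbable into the target; the equivalence then runs in both directions without further work.
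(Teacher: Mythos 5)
Your proposal is correct and takes essentially the same approach as the paper's proof: apply $\Theta$ and Stanley's Hook Content Formula, fold via the relations $g_j=-g_{p-j}$, invoke Theorem~\ref{alg independence thm} to compare multiplicities of the free generators $g_2,\ldots,g_{(p-1)/2}$, and repackage the result as a multiset identity. The only difference is cosmetic and lies in the final repackaging: the paper handles it with a single clean observation --- for multisets $\mathcal{M},\mathcal{N}$ with $|\mathcal{N}|=|\mathcal{M}|+1$, the equalities $\mathcal{M}\setminus\{1\}^*=\mathcal{N}\setminus\{1\}^*$ and $\mathcal{M}=\mathcal{N}\setminus\{1\}$ are equivalent (which incidentally means Lemma~\ref{C does not have 1 lemma}(ii) is not actually needed here) --- whereas you split into the cases $j'=1$ and $j'\geq 2$ and count cardinalities; both are valid.
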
 
	
	\begin{proof}
		Using Corollary~\ref{Theta irreducibility cor}, the modular plethysm $\nabla^{\nu} \Sym^l E$ is stably-irreducible if and only if $\Theta(\overline{\nabla^{\nu} \Sym^l E})$ is equal to $g_j$ for some $1\leq j\leq p-1$. Rewrite $\Theta(\overline{\nabla^{\nu} \Sym^l E})$ using Corollary~\ref{SHCF cor} to get an equivalent condition: there exists $1\leq j\leq p-1$ such that
		\[
		\frac{\prod_{c\in \mathcal{C}_{l+1}} g_c}{\prod_{h\in \mathcal{H}} g_h} =  g_j.
		\] 
		Now, all the indices involved lie between $1$ and $p-1$ (using Lemma~\ref{C does not have 1 lemma}(i) and Remark~\ref{p=3 remark}), and thus we can replace each $g_m$ with $m\geq (p+1)/2$ by $-g_{p-m}$. In other words, we can replace the multisets $\mathcal{C}_{l+1}$ and $\mathcal{H}$ by their folds, $j$ by $p-j$ if necessary and potentially add a minus sign. Thus we get the following valid statement: $\nabla^{\nu} \Sym^l E$ is stably-irreducible if and only if there are $1\leq i\leq (p-1)/2$ and $\varepsilon\in \left\lbrace \pm 1\right\rbrace $ such that
		\begin{equation}\label{SCHF equation}
		\frac{\prod_{c\in \mathcal{C}_{l+1}^F} g_c}{\prod_{h\in \mathcal{H}^F} g_h} = \varepsilon g_i.
		\end{equation}
		This can be also written as $\prod_{c\in \mathcal{C}_{l+1}^F} g_c = \varepsilon \prod_{h\in \mathcal{H}^F\cup \left\lbrace  i\right\rbrace  } g_h$. Each $g_j$ involved is either $1$ (if $j=1$) or is labelled by $j$ in the range $2,3,\dots, (p-1)/2$. Therefore, using Theorem~\ref{alg independence thm}, we see that (\ref{SCHF equation}) holds if and only if $\varepsilon =1$ and $\mathcal{C}_{l+1}^F\setminus\left\lbrace  1\right\rbrace  ^* =\left( \mathcal{H}^F\cup\left\lbrace  i\right\rbrace \right) \setminus\left\lbrace  1\right\rbrace  ^*$.
		
		Thus $\nabla^{\nu} \Sym^l E$ is stably-irreducible if and only if there exists $1\leq i\leq (p-1)/2$ such that $\mathcal{C}_{l+1}^F\setminus\left\lbrace  1\right\rbrace  ^* =\left( \mathcal{H}^F\cup\left\lbrace  i\right\rbrace \right) \setminus\left\lbrace  1\right\rbrace  ^*$. Moreover, we can omit the range of $i$ in this statement as it comes implicitly from the equality of the multisets.
		
		To finish, notice that for two finite multisets of integers $\mathcal{M}$ and $\mathcal{N}$ with $|\mathcal{N}| = |\mathcal{M}| + 1$ the statements $\mathcal{M}\setminus\left\lbrace  1\right\rbrace  ^* = \mathcal{N}\setminus\left\lbrace  1\right\rbrace  ^*$ and $\mathcal{M} = \mathcal{N} \setminus \left\lbrace  1\right\rbrace  $ are equivalent. Applying this with $\mathcal{M} = \mathcal{C}_{l+1}^F$ and $\mathcal{N}=\mathcal{H}^F\cup \left\lbrace  i\right\rbrace $ yields the result.   
	\end{proof}
	
	\subsection{Stably-irreducible classifications}
	
	We deduce Theorem~\ref{full irred modular plethysms of E thm} from the classification of the stably-irreducible modular plethysms $\nabla^{\nu}\Sym^l E$ with $0\leq l\leq p-2$ and $\nu$ a $(p,l)$-small partition.
	
	\begin{theorem}\label{irred modular plethysms of E thm}
		Let $0\leq l\leq p-2$ and let $\nu$ be a $(p,l)$-small partition. Then $\nabla^{\nu} \Sym^l E$ is stably-irreducible if and only if (at least) one of the following happens:
		\begin{enumerate}[label=\textnormal{(\roman*)}]
			\item \textnormal{(elementary cases)} $\nu = \o$ or $\nu=(1)$,
			\item \textnormal{(row cases)} $\nu=(p-l-2)$ or $l=1$,
			\item \textnormal{(column cases)} $\nu=(1^l)$ or $l=p-3$,
			\item \textnormal{(rectangular cases)} $p=7$ and either $\nu=(2,2,2)$ with $l=3$ or $\nu=(3,3)$ with $l=2$.
		\end{enumerate} 
	\end{theorem}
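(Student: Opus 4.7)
The plan is to apply Proposition~\ref{multiset prop}, which reduces stable-irreducibility to the combinatorial identity $\mathcal{C}_{l+1}^F = (\mathcal{H}^F \cup \{i\}) \setminus \{1\}$ for some integer $i$. The elementary cases $\nu = \emptyset$ and $\nu = (1)$ are handled by inspection. Moreover, Remark~\ref{involution remark} tells us that stable-irreducibility is preserved under $(\nu, l) \mapsto (\nu', p-l-2)$, which exchanges the row cases with the column cases and the two $p = 7$ rectangles with each other, so it suffices to verify only one half of the list from scratch.

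For the forward direction I would compute $\mathcal{H}^F$ and $\mathcal{C}_{l+1}^F$ explicitly for each remaining family. For a row $\nu = (a)$ with $a \leq p-l-2$ the hook lengths are $\{1,2,\dots,a\}$ and the shifted contents are $\{l+1,l+2,\dots,l+a\}$; a short case split on where these values lie relative to $(p-1)/2$ shows their folds differ by the removal of a single $1$ and the insertion of a single element, handling both $\nu=(p-l-2)$ and the general $l=1$ case of item (ii). The column cases in (iii) are then immediate by duality. For the rectangular case $\nu = (3,3)$, $l = 2$, $p = 7$, one computes $\mathcal{H} = \{4,3,2,3,2,1\}$ and $\mathcal{C}_3 = \{3,4,5,2,3,4\}$, whose folds are $\mathcal{H}^F = \{3,3,3,2,2,1\}$ and $\mathcal{C}_3^F = \{3,3,3,3,2,2\}$, and the multiset identity holds with $i = 3$; the case $\nu = (2,2,2)$, $l = 3$ follows by conjugation.

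The converse direction is the main obstacle. Assuming the multiset equation holds for a non-empty $(p,l)$-small partition $\nu$ other than $(1)$, one must force $\nu$ into the classification. The strategy is a cascade of multiplicity comparisons. By Lemma~\ref{C does not have 1 lemma}(ii)--(iii), the multiplicity of $1$ in $\mathcal{C}_{l+1}^F$ is zero and that of $2$ equals the number $t \in \{0,1,2\}$ of corner conditions $\ell(\nu) = l$ and $\nu_1 = p-l-2$ that hold, while Lemma~\ref{shifted content lemma} pins down the extremal shifted contents $\nu_1 + l$ and $l+2-\ell(\nu)$, which have multiplicity one. Matching multiplicities at $1$, $2$, and at the extremal values across the multiset equation severely restricts how many hook lengths of $\nu$ can equal $1, 2, p-2$ or $p-1$, and hence restricts the allowable inner and outer corners of the Young diagram of $\nu$.

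I would then organise the case analysis by $\ell(\nu)$ and $\nu_1$ (again halving the work by Remark~\ref{involution remark}) and show that outside the rows, columns and $p = 7$ rectangles at least one of these multiplicity comparisons must fail. The $p = 7$ rectangular exceptions should emerge as precisely the small-prime solutions where the fold collapses several large shifted contents onto small hook lengths in a coincidental way, which higher primes no longer permit because the interval $\{1,\dots,(p-1)/2\}$ is too wide to create the needed coincidences. Assembling the pieces gives the claimed classification.
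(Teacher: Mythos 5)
Your overall plan matches the paper's: reduce to the multiset identity of Proposition~\ref{multiset prop}, verify the forward direction by computing folds, use Remark~\ref{involution remark} to halve the work, and for the converse extract constraints from multiplicities. Your forward-direction computation for $\nu=(3,3)$, $l=2$, $p=7$ is correct, including the fold.

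The converse direction as sketched has a real gap. The argument that makes the classification tractable is a single structural deduction you never state: since $\mathcal{C}_{l+1}^F$ does not contain $1$ (Lemma~\ref{C does not have 1 lemma}(ii)), the identity $\mathcal{C}_{l+1}^F = \left(\mathcal{H}^F\cup\left\lbrace i\right\rbrace\right)\setminus\left\lbrace 1\right\rbrace$ forces $m_{\mathcal{H}^F}(1)\leq 1$, and since a non-empty partition has $m_{\mathcal{H}}(1)\geq 1$, we get $m_{\mathcal{H}}(1)=1$: exactly one removable box, so $\nu$ must be a \emph{rectangle} $(a^b)$. Without this observation, a case analysis organised ``by $\ell(\nu)$ and $\nu_1$'' over arbitrary $(p,l)$-small shapes is not obviously finite-by-hand, and ``at least one of these multiplicity comparisons must fail'' is asserted rather than shown. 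Once $\nu=(a^b)$ is established, the paper's four-way split on $a=1$ versus $a>1$ and $b=1$ versus $b>1$ is short: for $a,b\geq 2$ one has $m_{\mathcal{H}^F}(2)\geq 2$, hence $m_{\mathcal{C}_{l+1}^F}(2)\geq 2$, which by Lemma~\ref{C does not have 1 lemma}(iii) forces both $a=p-l-2$ and $b=l$; then the $p$-smallness bound $ab<p$ rearranges to $(p-l-3)(l-1)\leq 2$, which yields exactly the two $p=7$ rectangles. Your heuristic explanation of why only small primes admit rectangular solutions (folding coincidences) is suggestive but is not a substitute for this inequality. So: right tools, right family of constraints, but the hinge of the converse argument is missing and should be made explicit.
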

	
	\begin{remark}\label{cases rmk}
		Note that in (ii) $\nu$ is a row (or the empty) partition. This is obvious if $\nu=(p-l-2)$ and it is implicit if $l=1$ since $\nu$ is $(p,l)$-small and thus $\ell(\nu)\leq l=1$. Hence the label `row cases'. An analogous statement can be made for (iii).
	\end{remark}
	
	\begin{proof}
		Clearly $\nabla^{\nu} \Sym^l E$ is stably-irreducible if $\nu$ is the empty partition. The statement is also trivially true if $p=3$ using Remark~\ref{p=3 remark}. Let us now suppose that $\nu\neq \o$ and $p\geq 5$ (and thus $2\leq (p-1)/2$).
		
		By Proposition~\ref{multiset prop} we can classify pairs $(\nu,l)$ for which there is an integer $i$ such that $\mathcal{C}_{l+1}^F = \left( \mathcal{H}^F\cup\left\lbrace  i\right\rbrace \right) \setminus\left\lbrace  1\right\rbrace   $. From Lemma~\ref{C does not have 1 lemma}(ii), $\mathcal{C}_{l+1}^F$ does not contain $1$ and hence we need $m_{\mathcal{H}^F}(1)\leq 1$. This means that $\nu$ has exactly one removable box. Therefore $\nu$ is a rectangular partition $(a^b)$ for some $a,b\in \mathbb{N}$.
		
		We now consider four cases given by distinguishing $a=1$ from $a\neq 1$ and $b=1$ from $b\neq 1$. The inequality
		
		\begin{equation}\label{2 inequality}
		m_{\mathcal{C}_{l+1}^F}(2)\geq  m_{\mathcal{H}^F}(2)
		\end{equation}
		obtained from $\mathcal{C}_{l+1}^F = \left( \mathcal{H}^F\cup\left\lbrace  i\right\rbrace \right) \setminus\left\lbrace  1\right\rbrace  $ is used throughout.
		
		Note the labelling below corresponds to the labelling used in the statement of the theorem. In each but the first case we firstly deduce necessary constraints on $\nu$ and $l$ and then check that they give rise to stably-irreducible modular plethysms.
		
		\begin{enumerate}[label=\textnormal{(\roman*)}]
			\item If $a=b=1$, we have $\nu=(1)$ and $\nabla^{\nu} \Sym^l E = \Sym^l E$ which is a non-projective irreducible module.
			
			\item If $a>1$ and $b=1$, then $\mathcal{H}^F$ contains $2$ and hence $m_{\mathcal{C}_{l+1}^F}(2)\geq 1$ by (\ref{2 inequality}). Using Lemma~\ref{C does not have 1 lemma}(iii), either $a=p-2-l$, that is $\nu = (p-2-l)$, or $b=l$, that is $l=1$.
			
			Let us check that in both cases we actually get stably-irreducible modular plethysms. For $l=1$ the partition $\nu$ has implicitly at most one row (see Remark~\ref{cases rmk}). Thus $\nu = (a)$ for some $0\leq a \leq p-3$ (the upper bound comes from the definition of $(p,l)$-small partitions). Hence $\nabla^{\nu} \Sym^l E = \Sym^a E$ is a non-projective irreducible module.
			
			In the former case $\nu = (p-2-l)$ we get $\mathcal{H} = \left\lbrace 1,2,\dots, p-2-l\right\rbrace $ and $\mathcal{C}_{l+1} = \left\lbrace l+1, l+2, \dots, p-2 \right\rbrace$. Thus $\mathcal{C}_{l+1}^F = \left( \mathcal{H}^F\cup\left\lbrace  i\right\rbrace \right) \setminus\left\lbrace  1\right\rbrace $ with $i=l+1$ or $i=p-l-1$.
			
			\item If $a=1$ and $b>1$, then we proceed as in (ii). One obtains two families of stably-irreducible modular plethysms given by $\nu = (1^l)$ and $l=p-3$.
			
			\item If $a,b\geq 2$, then $m_{\mathcal{H}^F}(2)\geq 2 $. Using (\ref{2 inequality}) we have $m_{\mathcal{C}_{l+1}^F}(2)\geq 2$, which implies that $a=p-2-l$ and $b=l$ by Lemma~\ref{C does not have 1 lemma}(iii). Since $\nu$ is $p$-small we also need $ab = |\nu| <p$. This becomes $pl-2l-l^2\leq p-1$ which can be rearranged as $(p-l-3)(l-1)\leq 2$. This inequality is easy to solve as both brackets on the left-hand side are positive integers (they equal $a-1$ and $b-1$, respectively). We get $p=7$ together with $\nu = (2,2,2)$ and $l=3$ \emph{or} $\nu = (3,3)$ and $l=2$.
			
			Both corresponding modular plethysms are stably-irreducible as $\mathcal{C}_{l+1}^F = \left( \mathcal{H}^F\cup\left\lbrace  3\right\rbrace \right) \setminus\left\lbrace  1\right\rbrace $, according to Figure~\ref{folded H and C figure}.
			\qedhere  
		\end{enumerate}
		
		\begin{figure}[h]
			\Yboxdim19pt
			$\young(33,32,21) \quad  \young(32,33,23) \qquad \young(332,321) \quad \young(332,233)$
			\caption{The Young diagrams of partitions $\nu = (2,2,2)$ and $\nu=(3,3)$ displaying $\mathcal{H}^F$ and $\mathcal{C}_{l+1}^F$ from left to right with $l=3$ and $l=2$, respectively.}
			\label{folded H and C figure}
		\end{figure}  
	\end{proof}
	
	In light of Lemma~\ref{reduction lemma} we can now add columns of length $l+1$ and rows of length $p-l-1$ to prove the second main result Theorem~\ref{full irred modular plethysms of E thm}, extending this classification to all $p$-small partitions. Note that we can never add both a column of size $l+1$ and a row of length $p-l-1$ as then our new partition would not be $p$-small.
	
	\begin{proof}[Proof of Theorem~\ref{full irred modular plethysms of E thm}]
		We can see that adding $b$ rows of length $p-l-1$ to partitions in the elementary cases of Theorem~\ref{irred modular plethysms of E thm} yields precisely the elementary cases augmented by rows. Similarly for adding $a$ columns of length $l+1$, we recover the elementary cases augmented by columns.
		
		Now adding $b$ rows of length $p-l-1$ to the partition $\nu=(p-l-2)$ from the row cases yields the first family in the augmented row cases, while when it comes to columns, we can add just one (to end up with a $p$-small partition) which gives us the hook case. When we move to the case $l=1$ our original $(p,l)$-small partition can be any row (or the empty) partition $(a)$ with $0\leq a\leq p-3$. We can now add rows of length $p-2$ or columns of length $2$. Therefore we can obtain any $p$-small partition contained inside the box $2\times (p-2)$. Conversely, for any $p$-small partition $\nu$ not contained inside the box $2\times (p-2)$ the $kG$-module $\nabla^{\nu} \Sym^1 E = \nabla^{\nu} E$ is projective (and thus not stably-irreducible) by Theorem~\ref{projective classification thm}. Hence we obtain the rest of the augmented row cases.
		
		The column cases are analogous to the row cases. We obtain the augmented column cases and again the hook case.
		
		Finally, we cannot add any more boxes to the rectangular cases as we already have $6=p-1$ boxes and we need our partition to be $p$-small. This gives us the final case in Theorem~\ref{full irred modular plethysms of E thm}, finishing the proof.   
	\end{proof} 
	
	We are ready to prove Theorem~\ref{final classification thm}. Recall that we use the term stably-irreducible pairs for pairs $(\nu, l)$ of a $p$-small partition $\nu$ and $0\leq l \leq p-2$ such that $\nabla^{\nu} \Sym^l E$ is stably-irreducible. The stably-irreducible pairs are therefore classified by Theorem~\ref{full irred modular plethysms of E thm}.
	
	\begin{proof}[Proof of Theorem~\ref{final classification thm}]
		We know from Corollary~\ref{kG-Omega cor}(ii) that every non-projective indecomposable $kG$-module can be written, up to isomorphism, as $\Omega^i (\Sym^l E)$ for some $0\leq i\leq p-2$ and $0\leq l\leq p-2$. Using Corollary~\ref{Heller endo cor} $i$ times yields $\nabla^{\nu}\left(\Omega^i \left( \Sym^l E\right)  \right) \piso \Omega^{i|\nu|}\left( \nabla^{\lambda} \Sym^l E \right) $ where $\lambda$ is defined as in the statement of the theorem.
		
		By Lemma~\ref{initial modular plethysms lemma}(i), the non-projective indecomposable summands of the modular plethysm $\nabla^{\lambda} \Sym^l E$ are irreducible. Hence, using Corollary~\ref{kG-Omega cor}(i) and Proposition~\ref{Heller prop}(i) and (iii), $p-1$ has to divide $i|\nu|$ so $\Omega^{i|\nu|}\left( \nabla^{\lambda} \Sym^l E \right)$ has a chance to have some non-projective irreducible summands. But in such a case $\Omega^{i|\nu|}\left( \nabla^{\lambda} \Sym^l E \right) $ is just $\nabla^{\lambda} \Sym^l E$ which is, from the definition, stably-irreducible if and only if $(\lambda, l)$ is a stably-irreducible pair.
	\end{proof}

	\subsection*{Acknowledgements} The author would like to thank Mark Wildon for suggesting this project and providing Magma code used for experimental calculations at the initial stage of the project, and an anonymous referee for helpful suggestions and corrections.
	
	\bibliographystyle{alpha}
	\bibliography{MSNrefs}

\begin{thebibliography}{dBPW21}

\bibitem[AF78]{AlmkvistFossumCyclic78}
Gert Almkvist and Robert Fossum.
\newblock Decomposition of exterior and symmetric powers of indecomposable
  {${\bf Z}/p{\bf Z}$}-modules in characteristic {$p$} and relations to
  invariants.
\newblock In {\em S\'{e}minaire d'{A}lg\`ebre {P}aul {D}ubreil, 30\`eme
  ann\'{e}e ({P}aris, 1976--1977)}, volume 641 of {\em Lecture Notes in Math.},
  pages 1--111. Springer, Berlin, 1978.

\bibitem[Alm78]{AlmkvistComponents78}
Gert Almkvist.
\newblock The number of nonfree components in the decomposition of symmetric
  powers in characteristic {$p$}.
\newblock {\em Pacific J. Math.}, 77(2):293--301, 1978.

\bibitem[Alm81]{AlmkvistReciprocity81}
Gert Almkvist.
\newblock Representations of {${\bf Z}/p{\bf Z}$} in characteristic {$p$} and
  reciprocity theorems.
\newblock {\em J. Algebra}, 68(1):1--27, 1981.

\bibitem[Alp86]{AlperinLocal86}
J.~L. Alperin.
\newblock {\em Local representation theory}, volume~11 of {\em Cambridge
  Studies in Advanced Mathematics}.
\newblock Cambridge University Press, Cambridge, 1986.
\newblock Modular representations as an introduction to the local
  representation theory of finite groups.

\bibitem[Ben17]{BensonBundles17}
David~J. Benson.
\newblock {\em Representations of elementary abelian {$p$}-groups and vector
  bundles}, volume 208 of {\em Cambridge Tracts in Mathematics}.
\newblock Cambridge University Press, Cambridge, 2017.

\bibitem[Bry09]{BryantLiePowers09}
R.~M. Bryant.
\newblock Lie powers of infinite-dimensional modules.
\newblock {\em Beitr\"{a}ge Algebra Geom.}, 50(1):179--193, 2009.

\bibitem[Car98]{CarlsonEndotrivialpGroups98}
Jon~F. Carlson.
\newblock A characterization of endotrivial modules over {$p$}-groups.
\newblock {\em Manuscripta Math.}, 97(3):303--307, 1998.

\bibitem[dBPW21]{deBoeckPagetWildonPlethysms21}
Melanie de~Boeck, Rowena Paget, and Mark Wildon.
\newblock Plethysms of symmetric functions and highest weight representations.
\newblock {\em Trans. Amer. Math. Soc.}, 374(11):8013--8043, 2021.

\bibitem[EGS07]{GreenPolynomial80}
Karin Erdmann, James~A. Green, and Manfred Schocker.
\newblock {\em Polynomial representations of {${\rm GL}_{n}$}}, volume 830 of
  {\em Lecture Notes in Mathematics}.
\newblock Springer-Verlag, Berlin-New York, second edition, 2007.

\bibitem[Glo78]{GloverRepresentations78}
D.~J. Glover.
\newblock A study of certain modular representations.
\newblock {\em J. Algebra}, 51(2):425--475, 1978.

\bibitem[HK00]{HughesKemperCyclic00}
Ian Hughes and Gregor Kemper.
\newblock Symmetric powers of modular representations, {H}ilbert series and
  degree bounds.
\newblock {\em Comm. Algebra}, 28(4):2059--2088, 2000.

\bibitem[HK01]{HughesKemperSylow01}
Ian Hughes and Gregor Kemper.
\newblock Symmetric powers of modular representations for groups with a {S}ylow
  subgroup of prime order.
\newblock {\em J. Algebra}, 241(2):759--788, 2001.

\bibitem[Knu73]{KnutsonLambda73}
Donald Knutson.
\newblock {\em {$\lambda $}-rings and the representation theory of the
  symmetric group}.
\newblock Lecture Notes in Mathematics, Vol. 308. Springer-Verlag, Berlin-New
  York, 1973.

\bibitem[Kou90a]{KouwenhovenLambda90a}
Frank~M. Kouwenhoven.
\newblock The {$\lambda$}-structure of the {G}reen ring of {${\rm GL}(2,{\bf
  F}_p)$} in characteristic {$p$}. {I}.
\newblock {\em Comm. Algebra}, 18(6):1645--1671, 1990.

\bibitem[Kou90b]{KouwenhovenLambda90b}
Frank~M. Kouwenhoven.
\newblock The {$\lambda$}-structure of the {G}reen ring of {${\rm GL}(2,{\bf
  F}_p)$} in characteristic {$p$}. {II}.
\newblock {\em Comm. Algebra}, 18(6):1673--1700, 1990.

\bibitem[Mac95]{MacdonaldPolynomials95}
I.~G. Macdonald.
\newblock {\em Symmetric functions and {H}all polynomials}.
\newblock Oxford Mathematical Monographs. The Clarendon Press, Oxford
  University Press, New York, second edition, 1995.
\newblock With contributions by A. Zelevinsky, Oxford Science Publications.

\bibitem[McD22]{McDOwellWalk22}
Eoghan McDowell.
\newblock A random walk on the indecomposable summands of tensor products of
  modular representations of {${\rm SL}_2({\bf F}_p)$}.
\newblock {\em Algebr. Represent. Theory}, 25(2):539--559, 2022.

\bibitem[Mil71]{MilnorKTheory71}
John Milnor.
\newblock {\em Introduction to algebraic {$K$}-theory}.
\newblock Annals of Mathematics Studies, No. 72. Princeton University Press,
  Princeton, N.J.; University of Tokyo Press, Tokyo, 1971.

\bibitem[MW22]{McDowellWildonIsomorphisms22}
Eoghan McDowell and Mark Wildon.
\newblock Modular plethystic isomorphisms for two-dimensional linear groups.
\newblock {\em J. Algebra}, 602:441--483, 2022.

\bibitem[PW21]{PagetWildonPlethysms21}
Rowena Paget and Mark Wildon.
\newblock Plethysms of symmetric functions and representations of {${\rm
  SL}_2({\bf C})$}.
\newblock {\em Algebr. Comb.}, 4(1):27--68, 2021.

\bibitem[Sta99]{StanleyEnumerativeII99}
Richard~P. Stanley.
\newblock {\em Enumerative combinatorics. {V}ol. 2}, volume~62 of {\em
  Cambridge Studies in Advanced Mathematics}.
\newblock Cambridge University Press, Cambridge, 1999.
\newblock With a foreword by Gian-Carlo Rota and appendix 1 by Sergey Fomin.

\bibitem[Sta00]{StanleyPositivity00}
Richard~P. Stanley.
\newblock Positivity problems and conjectures in algebraic combinatorics.
\newblock In {\em Mathematics: frontiers and perspectives}, pages 295--319.
  Amer. Math. Soc., Providence, RI, 2000.

\bibitem[Was97]{WashingtonCyclotomic97}
Lawrence~C. Washington.
\newblock {\em Introduction to cyclotomic fields}, volume~83 of {\em Graduate
  Texts in Mathematics}.
\newblock Springer-Verlag, New York, second edition, 1997.

\end{thebibliography}
\end{document}